\documentclass[11pt]{amsart}

\usepackage{a4wide}
\usepackage{amssymb}
\usepackage{mathrsfs}
\usepackage{enumerate}
\usepackage{esint}
\usepackage{titletoc}
\usepackage[colorlinks=true, urlcolor=red, linkcolor=red, citecolor=blue]{hyperref}
\usepackage[nameinlink]{cleveref}
\usepackage[normalem]{ulem} 
\usepackage{xcolor}

\theoremstyle{plain}
\newtheorem{theorem}{Theorem}[section]
\newtheorem{lemma}[theorem]{Lemma}
\newtheorem{corollary}[theorem]{Corollary}
\newtheorem{proposition}[theorem]{Proposition}

\theoremstyle{definition}

\newtheorem{remark}[theorem]{Remark}

\numberwithin{equation}{section}

\DeclareMathOperator{\DIV}{div}

\newcommand{\R}{{\mathbb{R}}}

\newcommand{\cA}{{\mathcal{A}}}

\makeatletter
\@namedef{subjclassname@2020}{\textup{2020} Mathematics Subject Classification}
\makeatother

\title[Gradient estimates for Orlicz growth equations]{Gradient estimates for nonlinear elliptic equations with  Orlicz growth and measure data}
\author{Ying Li, Chao Zhang$^*$}
\address{Ying Li\newline
	School of Mathematics, Harbin Institute of Technology, Harbin 150001, China\newline
	\texttt{lymath@hit.edu.cn}}
\address{Chao Zhang\newline
	School of Mathematics and Institute for Advanced Study in Mathematics, Harbin Institute of Technology, Harbin 150001, China
	\newline
	\texttt{czhangmath@hit.edu.cn}}
\thanks{$^*$ Corresponding author.}
\thanks{{\bf Keywords}: gradient estimate; measure data;  Wolff potential; elliptic equations.}
\thanks{{\bf MSC 2020}: Primary: 35J62;	Secondary: 31C45, 35B65}

\begin{document}
	\begin{abstract}
		We  establish gradient estimates of solutions to a class of nonlinear elliptic equations with measure data under Orlicz-type growth conditions. 
		The growth is governed by the structural condition
		\[
		0<i_a\le t g'(t)/g(t)\le s_a<1.
		\]
		We obtain two types of regularity results: pointwise Wolff potential estimates for the gradient of solutions in the singular regime $i_a \in \big(\frac{n-1}{2n-1},1\big)$, and Lipschitz regularity of the solutions in the  regime $i_a \in (0,1)$. 
		In the power-type case $g(t)=t^{p-1}$, our results recover the known gradient estimates for the singular $p$-Laplace equation.
	\end{abstract}

	\maketitle
	\section{Introduction}\label{sec1}
	We  investigate gradient estimates for solutions to the following quasilinear elliptic equation with measure data under Orlicz-type growth:
	\begin{equation}\label{eq-main}
		-\DIV\cA(x,Du)=\mu,
	\end{equation}
	in a bounded open subset $\Omega$ of $\R^n$, where $n\ge2$ and $\mu$ is  an arbitrary bounded measure on $\Omega$.  The vector field $\mathcal A:\Omega\times\mathbb R^n\to\mathbb R^n$ is assumed to be
	continuously differentiable with respect to the gradient variable and to satisfy the
	following structural conditions: for all $x,x_1,x_2\in\Omega$ and $\xi,\eta\in\mathbb R^n$,
	\begin{equation}\label{eq:struct3}
		\begin{split}
			|\mathcal A(x,\xi)|+|D_\xi\mathcal A(x,\xi)|\,|\xi|
			&\le \Lambda\, g(|\xi|), \\
			\langle D_\xi\mathcal A(x,\xi)\eta,\eta\rangle
			&\ge \lambda \, g'(|\xi|)\,|\eta|^2,\\
			|\mathcal A(x_1,\xi)-\mathcal A(x_2,\xi)|
			&\le w(|x_1-x_2|)\, g(|\xi|),
	\end{split}	\end{equation}
	with $0<\lambda\leq 1\leq \Lambda$, and 
	$w:[0,\infty)\to[0,1]$ is a nondecreasing, subadditive modulus of continuity with
	\begin{equation}\label{eq:w-cond}
		w(0)=0, \qquad \lim_{\rho\to0}w(\rho)=0,
	\end{equation}
	and satisfies the Dini-type integrability condition
	\begin{equation}\label{eq:dini}
		\int_0^R
		\frac{w(\rho)^{\frac{2i_a}{(1+i_a)^2}}}{\rho}\,d\rho<\infty
		\qquad \text{for every } R<\infty.
	\end{equation}
	Throughout this paper,  $g:[0,\infty)\to[0,\infty)$ is a $C^1$-function satisfying
	\begin{equation}\label{eq:g-cond}
		0<i_a:=\inf_{t>0}\frac{t g'(t)}{g(t)}
		\le \sup_{t>0}\frac{t g'(t)}{g(t)}=:s_a<1, \qquad G(t):=\int_0^t g(\tau)\,d\tau,
	\end{equation}
	where $G$ is an $N$-function (see Section~\ref{sec2}). 
	In addition, we work under the assumption that there exists constant $c>0$ such that for all $t\geq0$,
	\begin{equation}\label{eq-0}
		g(t)\geq c\,t^{i_a}.
	\end{equation}
	
	We see that typical examples of $g$ include the power-type growth $g(t)=t^{p-1}$ with $1<p<2$, the borderline logarithmic growth $g(t)=t^{p-1}\log(e+t)$ with $1<p<2$, and the double-phase growth $g(t)=t^{p-1}+t^{q-1}$ with $1<p\le q<2$.
	
	The study of potential theory can be traced back to~\cite{M70,MHbook}. A fundamental step in nonlinear potential theory was made by  Kilpel\"{a}inen and Mal\'{y}~\cite{KM94}, who derived pointwise Wolff potential estimates for  solutions   to $p$-Laplace equations with nonnegative measures.   Subsequently, alternative proofs and extensions of these estimates have been developed by Trudinger and Wang~\cite{TW02,TW09}, allowing one to treat more general subelliptic operators. Further proofs of pointwise potential estimates can be found in \cite{DM-ameriacn-11,KK10}, where an approach covering the case of general signed measures was developed. For more results concerning pointwise estimates of solutions, we refer to~\cite{BS,CGZ24,CMM,Kim-jfa-25,NNSW,NOS}.

	
	The first extension of pointwise potential estimates of solutions  to the corresponding gradient estimates was obtained in \cite{Mingione11}, where Riesz potential estimates for gradients of solutions to the $p$-Laplace equation were established in the linear case $p=2$. For $p>2$, Wolff potential estimates for gradients of solutions were  established in \cite{DM-ameriacn-11}.  These results were subsequently refined in \cite{KM13}, where gradient estimates were further expressed in terms of Riesz potentials.  Moreover, Duzaar and Mingione \cite{DM-jfa-10} obtained Riesz potential estimates for gradients in the subquadratic range $p \in (2-\frac{1}{n},\,2)$. Later, Baroni \cite{BP-calc-15} extended these results to elliptic equations with Orlicz growth, establishing Riesz potential estimates for gradients of solutions of the form
	\[
	g(\left|\nabla u(x_0)\right|) \leq C g\!\left(\fint_{B(x_0,2R)} |\nabla u| \,dx \right)
	+ C \mathcal{I}_1^{2R}(|\mu|)(x),
	\]
	under the structural assumption
	\[
	1 \le i_a := \inf_{t>0} \frac{t g'(t)}{g(t)}
	\le \sup_{t>0} \frac{t g'(t)}{g(t)} =: s_a,
	\]
	where
	\[
	\mathcal{I}_1^{2R}(|\mu|)(x) := \int^{2R}_0 \frac{|\mu|(B_{\rho})}{\rho^{n-1}}\,\frac{d\rho}{\rho}
	\]
	denotes the Riesz potential of the measure $|\mu|$.  In addition, Wolff potential estimates  for the Orlicz growth equations under the assumptions that $1-\frac{1}{n}<i_a\leq s_a<1$ were studied by Yao and Zhou in~\cite{YZ26}. We refer the readers to the comprehensive survey~\cite{KM14} for  further background on potential estimates and their applications.

	In a more recent work, Nguyen and Phuc \cite{NP20} investigated the singular range
	\[\frac{3n-2}{2n-1} < p \le 2-\frac{1}{n},\]
	for the $p$-Laplace equation with Dini-continuous coefficients, and established Wolff potential estimates for gradients of solutions.
	Dong and Zhu \cite{Dongz-jems-24} subsequently strengthened the gradient estimates by replacing Wolff potentials with Riesz potentials, and further obtained Lipschitz regularity of solutions for $p\in(1,2)$. For the lower range $p\in(1,\frac{3}{2})$, Riesz potential estimates for gradients were obtained in \cite{NP23}. We also mention that related gradient estimates  for $p$-Laplace type equations with more general Dini-continuous coefficients can be found in \cite{xu}.  As potential estimates plays an important  role in regularity theory, the study of gradient potential estimates has attracted significant interest and has been extended to a variety of equations~\cite{BSY,CMM,CSYZ,DKLN-b,DKLN,Km14-b,XZM}.

	The papers mentioned above motivate the study of analogous questions for elliptic equations with Orlicz growth in the regime $i_a<1-\frac{1}{n}$.  However, as shown in~\cite{DM-jfa-10}, when $0<i_a<1-\frac1n$, a distributional solution to \eqref{eq-main} with bounded measure data may fail to belong to $W^{1,1}_{\mathrm{loc}}(\Omega)$.   To address this, a weaker notion of derivative based on truncations is considered, following the approach introduced in~\cite{NP20}.   For $k>0$ and $s\in\mathbb R$, the standard truncation is defined by
	\[
	T_k(s):=\max\{-k,\min\{s,k\}\}.
	\]
	If $u$ satisfies $T_k(u)\in W_0^{1,G}(\Omega)$ for every $k>0$, then there exists a measurable vector field $Z_u:\Omega\to\mathbb R^n$ such that
	\[
	\nabla T_k(u)=\chi_{\{|u|<k\}}\,Z_u \quad \text{a.e. in }\Omega,
	\]
	see~\cite[Lemma~2.1]{BBGP95}. The vector field $Z_u$ is referred to as the \emph{generalized gradient} of $u$ and denoted by $\nabla u$. We point out here that if $u$ is a distributional solution to the Dirichlet problem
	\begin{align}\label{eq:sec4}
		\begin{cases}
			- \DIV\left(\cA(x,\nabla u)\right) = \mu & \text{in } \Omega,\\
			u = 0 & \text{on } \partial \Omega,
		\end{cases}
	\end{align}
	such that for every $k>0$, the truncated function $T_k(u)$ belongs to $W_0^{1,G}(\Omega)$ and satisfies
	\[
	- \DIV\big(\cA(x,\nabla T_k(u))\big) = \mu_k \quad \text{in } \mathcal{D}'(\Omega)
	\]
	for some finite measure $\mu_k$ with $\mu_k \to \mu$ and $|\mu_k| \to |\mu|$ weakly in the sense of measures on $\mathbb{R}^n$, 
	then the pointwise estimate \eqref{ineq:int} holds almost everywhere for the generalized gradient $\nabla u$.   In particular, any \emph{renormalized solution} to \eqref{eq:sec4} satisfies the above  conditions, so the pointwise estimate always applies. 
	For the existence of renormalized solutions to elliptic equations with Orlicz growth and measure data, we refer to \cite{C23}. See also \cite{DMOP} for several equivalent definitions of renormalized solutions.
	
	Our  main results  in this manuscript consist  two types of regularity estimates for  solutions to \eqref{eq-main}:  pointwise Wolff potential estimates for the  gradient   in the singular regime $i_a \in \big(\frac{n-1}{2n-1},1\big)$,  and Lipschitz regularity in the  regime $i_a \in (0,1)$. Since the gradient of solutions $\nabla u$ may not belong to $L^1_{\mathrm{loc}}(\Omega)$, we need to  replace the standard mean oscillation 
	\[\fint_{B_{\rho}}|\nabla u-(\nabla u)_{\rho}|\,dx \] 
	by the quantity
	\[ \phi(x,\rho)=\left(\inf_{q\in \mathbb{R}^n}\fint_{B_{\rho}}|\nabla u-q|^{\gamma}\,dx\right)^{\frac{1}{\gamma}},\quad \gamma<1.\]
	Such modifications have been carried out in \cite{dong-arma-12,DLK20,DK17,DZ22,NP20,NP23}. To establish decay estimates for the excess functional $\phi(x,\rho)$ (Propisition~\ref{prop1}), it is crucial to obtain comparison estimates of the form $\left(\fint_{B_{2r}} (\nabla u - \nabla w)^{\gamma}\,dx \right)^{\frac{1}{\gamma}}$, where $w$ is a solution to the homogeneous equation
	\[-\DIV\left(\cA(x,\nabla w)\right)=0\]
	with  condition $w=u$ on $\partial B_{2r}$(see Lemma~\ref{lem-32}). We remark that in Lemma ~\ref{lem-32}, when $i_a \in(\frac{n-1}{2n-1},1)$, the right-hand side $\gamma_0$ can be taken up to $1-i_a$, while for $i_a \in (0, \frac{n-1}{2n-1})$ only $\gamma_0 < 1-i_a$ is attainable. This is the reason in the case $i_a \in (0, \frac{n-1}{2n-1})$, we can only derive Lipschitz estimates for solutions. Our arguments are much influenced by \cite{dong-arma-12,NP20}. However, in contrast to the $p$-Laplace case, a more delicate analysis is required due to the non-homogeneity of the operator, which prevents the direct use of standard scaling arguments.  Moreover, we need a reverse H\"{o}lder inequality for $\nabla w$ (Lemma~\ref{lemma-RH}), which, to the best of our knowledge, has not appeared in the literature. Achieving this requires not only adapting the classical iteration lemma to the Orlicz framework (Lemma~\ref{lemma-61}), but also carefully combining several additional auxiliary lemmas. In addition, the non-homogeneous nature of Orlicz growth requires an extra structural condition \eqref{eq-0} to obtain the desired decay estimates for the excess functional $\phi(x,\rho)$ (Propisition~\ref{prop1}). 
	
	A (weak) solution to \eqref{eq-main} is a function $u\in W_{\rm loc}^{1,G}(\Omega)$ such that
	\begin{equation}
		\int_{\Omega}\cA(x,\nabla u)\cdot\nabla \varphi \,dx=\int_{\Omega}\varphi \,d\mu
	\end{equation}
	holds for all $\varphi\in C_0^{\infty}(\Omega)$.

	Our main results are stated as follows.
	\begin{theorem}[Interior pointwise gradient estimate]\label{thm:int}
		Let $i_a\in \big(\frac{n-1}{2n-1},1\big)$. If $u\in W^{1,G}_{\mathrm{loc}}(\Omega)$ is a solution to \eqref{eq-main},
		and assume that \eqref{eq:struct3}--\eqref{eq-0} hold.  
		Then there exists a constant
		\[
		C = C(n,i_a,s_a,\lambda,\Lambda, \omega),
		\]
		such that for every Lebesgue point $x\in\Omega$ of the vector--valued function
		$\nabla u$ and for any $B_R(x)\subset\Omega$ with $R\in(0,1]$, the pointwise estimate
		\begin{equation}\label{ineq:int}
			|\nabla u(x)|
			\le
			C\Big[\mathbf{W}^{R}_{i_a,g}(|\mu|)(x)\Big]^{\frac1{i_a}}
			+
			C\left(
			\fint_{B_R(x)} \left(|\nabla u(y)|+1\right)^{1-i_a}\,dy
			\right)^{\frac1{1-i_a}}
		\end{equation}
		holds almost everywhere. Here the truncated Wolff potential is defined by
		\[
		\mathbf{W}^{R}_{i_a,g}(|\mu|)(x)
		:=
		\int_0^{R}
		\left(
		g^{-1}\!\left(
		\frac{|\mu|(B_\rho(x))}{\rho^{\,n-1}}
		\right)
		\right)^{i_a}
		\frac{d\rho}{\rho}.
		\]
	\end{theorem}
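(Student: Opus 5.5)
The plan is to run the Duzaar--Mingione / Nguyen--Phuc excess-decay iteration with the quasi-norm excess
\[
\phi(x,\rho)=\Big(\inf_{q\in\R^n}\fint_{B_\rho(x)}|\nabla u-q|^{\gamma}\,dy\Big)^{1/\gamma},\qquad \gamma=1-i_a .
\]
The case $i_a>\frac{n-1}{2n-1}$ is exactly what allows the comparison estimate (Lemma~\ref{lem-32}) to be used with $\gamma_0=1-i_a$. Fix $x$, $R\le1$ and the radii $r_j=\delta^jR$ with $\delta\in(0,1/4)$ small, to be chosen later.

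\textbf{Step 1 (comparison).} On each ball $B_{2r}$, let $w$ solve $-\DIV\cA(x,\nabla w)=0$ with $w=u$ on $\partial B_{2r}$. Lemma~\ref{lem-32} should give
\[
\Big(\fint_{B_{2r}}|\nabla u-\nabla w|^{\gamma}\Big)^{1/\gamma}\lesssim \Big[g^{-1}\Big(\frac{|\mu|(B_{2r})}{r^{n-1}}\Big)\Big]^{\,\cdot}\ \text{(with the appropriate power/mixing term)},
\]
after absorbing the non-homogeneity with \eqref{eq-0}. Next, freeze the coefficients by comparing $w$ with a solution $v$ of $-\DIV\cA(x_0,\nabla v)=0$. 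This produces an error term of order $w(r)^{\frac{2i_a}{(1+i_a)^2}}$ times an average of $|\nabla w|+1$. The reverse H\"older inequality (Lemma~\ref{lemma-RH}) is what lets me replace this higher-integrability average by a $\gamma$-average.

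\textbf{Step 2 (excess decay).} I will use the $C^{1,\alpha}$ excess decay for $v$ (frozen, autonomous Orlicz operator) together with Step~1, as packaged in Proposition~\ref{prop1}:
\[
\phi(x,\delta r)\le C\delta^{\alpha}\phi(x,r)+C_\delta\Big[g^{-1}\Big(\frac{|\mu|(B_{r})}{r^{n-1}}\Big)\Big]^{\dots}+C_\delta\, w(r)^{\frac{2i_a}{(1+i_a)^2}}\Big(\fint_{B_r}(|\nabla u|+1)^{\gamma}\Big)^{1/\gamma}.
\]
Choosing $\delta$ with $C\delta^\alpha\le 1/4$ and summing over $j$ turns the measure term into the Wolff potential $\mathbf W^R_{i_a,g}$, raised to $1/i_a$ after converting via $g^{-1}$ and $i_a$-homogeneity bounds. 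The Dini condition \eqref{eq:dini} sums the coefficient term.

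\textbf{Step 3 (pointwise bound).} Let $q_j$ be minimizers for $\phi(x,r_j)$. Since $|q_{j+1}-q_j|\lesssim \phi(x,r_j)+\phi(x,r_{j+1})$ (quasi-triangle inequality for $\gamma<1$), and $q_j\to\nabla u(x)$ at Lebesgue points, I get
\[
|\nabla u(x)|\le |q_0|+\sum_j\phi(x,r_j).
\]
The averages $(\fint_{B_{r_j}}(|\nabla u|+1)^\gamma)^{1/\gamma}$ appearing in the coefficient term are controlled by $|q_j|+\phi(x,r_j)+1$. They must be bounded uniformly through a bootstrap argument: assume $|q_j|\le M$ for all $j\le k$, where $M$ is the right-hand side of \eqref{ineq:int} with a large constant, and then use smallness of $\sum_j w(r_j)^{\frac{2i_a}{(1+i_a)^2}}$, which can be arranged by shrinking $R$ using the Dini condition, to show $|q_{k+1}|\le M$. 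Finally, $|q_0|\lesssim(\fint_{B_R}(|\nabla u|+1)^{\gamma})^{1/\gamma}$.

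\textbf{Main obstacle.} The hardest part will be the non-homogeneity in Steps 1--2. The comparison bound mixes $g^{-1}(\cdot)$ with powers of $\fint|\nabla u|^\gamma$, and scaling cannot normalize this mixing. I would handle it with a Young-type splitting, using \eqref{eq-0} to convert $g^{-1}(s)\lesssim s^{1/i_a}$ for large arguments, and the $+1$ term for small ones. Those splittings must keep all constants independent of $r$, so that the iteration closes.
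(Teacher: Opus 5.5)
Your plan matches the paper's proof. With $\gamma_0=1-i_a$, Proposition~\ref{prop1} is iterated along $r_j=\varepsilon^jR$ with $C\varepsilon^\alpha\le\frac14$. The measure terms are summed into $[\mathbf{W}_{i_a,g}]^{1/i_a}$ via Lemma~\ref{lem:Gg} and $\sum_j a_j\le(\sum_j a_j^{i_a})^{1/i_a}$. The coefficient term is made small by starting at a radius $r_{j_0}$ fixed by the Dini condition, which is why $C$ depends on $\omega$. The only difference is cosmetic: you close the iteration by induction on $|q_j|\le M$, whereas the paper compares $T_j$ with $|\nabla u(x_0)|$ in three cases; in both, the mixed term is absorbed by Young's inequality $\mathbf{W}\,M^{1-i_a}\le\frac{1}{10}M+C\mathbf{W}^{1/i_a}$, so the conversion $g^{-1}(s)\lesssim s^{1/i_a}$ you anticipate is unnecessary and would only give a weaker Riesz-type bound.
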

	
	\begin{theorem}[Interior Lipschitz estimate]\label{thm:int:lip}
		Let $i_a \in(0,1)$. If $u\in W^{1,G}_{\rm{loc}}(\Omega)$ is a solution to \eqref{eq-main} and \eqref{eq:struct3}--\eqref{eq-0} hold, then there exists a constant $C=C(n, i_a,s_a, \Lambda, \lambda,\omega)$ such that the estimate
		\begin{equation}\label{ineq:int:lip}
			\|\nabla u\|_{L^\infty(B_{R/2}(x))}
			\leq C \big\|\mathbf{W}_{i_a,g}^R(|\mu|)\big\|^\frac{1}{i_a}_{L^\infty(B_{R}(x))}
			+ C R^{-\frac{n}{1-i_a}} \||\nabla u|+1\|_{L^{1-i_a}(B_{R}(x))}
		\end{equation}
		holds for any $B_R(x)\subset\Omega$ with $R\in (0, 1]$.
	\end{theorem}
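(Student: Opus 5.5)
The plan is to follow the scheme of \cite{dong-arma-12,NP20,Dongz-jems-24}: a decay estimate for the excess
\[
\phi(y,\rho)=\Big(\inf_{q\in\R^n}\fint_{B_\rho(y)}|\nabla u-q|^{\gamma}\,dx\Big)^{1/\gamma},
\]
which is then summed along dyadic radii. The key difference from Theorem~\ref{thm:int} is the exponent. Here $\gamma$ is any fixed number in $(0,1-i_a)$, and we do not work at the endpoint $\gamma=1-i_a$. That is exactly the range in which the comparison estimate of Lemma~\ref{lem-32} holds for every $i_a\in(0,1)$. We also do not aim at a pointwise bound at each Lebesgue point. Instead we bound $\sup_{B_{R/2}}|\nabla u|$ by a quantity of the form $M=\sup_{y\in B_{R/2}}\sup_{\rho\le R/4}(\ldots)$ and absorb.

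\emph{Step 1 (one-scale decay).} Fix $y\in B_{R/2}(x)$ and $\rho\le R/4$, and let $w$ solve $-\DIV\cA(x,\nabla w)=0$ in $B_{2\rho}(y)$ with $w=u$ on $\partial B_{2\rho}(y)$. Lemma~\ref{lem-32} with exponent $\gamma$ bounds $(\fint|\nabla u-\nabla w|^\gamma)^{1/\gamma}$ by
\[
\big[g^{-1}(|\mu|(B_{2\rho})/\rho^{n-1})\big]^{\,\cdot}\,(\ldots)
\]
times a power of $\fint(|\nabla u|+1)^\gamma$. This is the place where the loss from $\gamma<1-i_a$ enters, and the factor can be handled by Young's inequality with a small parameter $\varepsilon$. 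For $\nabla w$ we use the $C^{1,\alpha}$-type excess decay for the frozen problem, with a Dini correction $w(\rho)^{2i_a/(1+i_a)^2}$ coming from \eqref{eq:dini}. The reverse H\"older inequality Lemma~\ref{lemma-RH} lets us pass from the $L^\gamma$ mean of $\nabla w$ to its $L^\infty$ norm. Together these give, as in Proposition~\ref{prop1},
\[
\phi(y,\kappa\rho)\le \tfrac12\phi(y,\rho)+C\,\omega_*(\rho)\Big(\fint_{B_{2\rho}}(|\nabla u|+1)^\gamma\Big)^{1/\gamma}+C\Big[g^{-1}\Big(\tfrac{|\mu|(B_{2\rho})}{\rho^{n-1}}\Big)\Big]^{i_a}\cdot \mathcal M^{1-i_a},
\]
where $\mathcal M$ is a local sup-type quantity and the last term is normalized using \eqref{eq-0}. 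Young's inequality then converts it into $\varepsilon\mathcal M+C_\varepsilon[\,g^{-1}(\cdot)\,]$.

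\emph{Step 2 (iteration).} Iterate along $\rho_j=\kappa^j\rho_0$ with $\rho_0=R/4$. Summing over $j$, the Dini sums $\sum\omega_*(\rho_j)$ are finite. The measure terms sum to at most $C\,\mathbf W^{R}_{i_a,g}(|\mu|)(y)$, raised to the power $1/i_a$ after undoing the Young step. The means $|(\nabla u)_{\rho_j}|$ (defined through the minimizing $q_j$) are controlled by telescoping $|q_{j+1}-q_j|\lesssim\phi(y,\rho_j)+\phi(y,\rho_{j+1})$. At Lebesgue points $q_j\to\nabla u(y)$, which gives
\[
|\nabla u(y)|\le \varepsilon\,\|\nabla u\|_{L^\infty(B_{3R/4})}+C\|\mathbf W^R_{i_a,g}(|\mu|)\|_{L^\infty(B_R)}^{1/i_a}+C\Big(\fint_{B_{R/4}(y)}(|\nabla u|+1)^\gamma\Big)^{1/\gamma}.
\]

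\emph{Step 3 (absorption and lowering the exponent).} Since $\gamma<1-i_a$, H\"older's inequality bounds the last term by the $L^{1-i_a}$ average, that is, by $CR^{-n/(1-i_a)}\||\nabla u|+1\|_{L^{1-i_a}}$, using $R\le1$ to keep the $+1$ harmless. Now repeat the argument on nested balls $B_{r}\subset B_{s}$ with $R/2\le r<s\le R$; here the radius factor $(s-r)^{-n/\gamma}$ appears. Choosing $\varepsilon$ small, the standard iteration lemma (Lemma~\ref{lemma-61}) absorbs the $\varepsilon\|\nabla u\|_\infty$ term. One must first know that $\|\nabla u\|_{L^\infty}$ is finite; this is obtained by approximating $\mu$ by smooth data and using stability of the estimates. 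The main obstacle is Step 1: keeping the nonhomogeneous structure of $g$ under control without scaling. Concretely, the comparison constant depends on the size of $\nabla u$, and I expect to need \eqref{eq-0} together with the $+1$ normalization to make the $\varepsilon$-absorption uniform in the scale.
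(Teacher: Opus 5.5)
Your route is the paper's: excess decay via Proposition~\ref{prop1} with a fixed $\gamma_0\le 1-i_a$, and the $L^{1-i_a}$-average of $|\nabla u|+1$ bounded by $\|\nabla u\|_{L^\infty}+1$. It continues with telescoping of $\mathbf q_{x,\rho}$ and absorption over nested balls. The paper's weighted sum $\sum_k 3^{-nk/\gamma_0}(\cdots)$ over $\rho_k=(1-2^{-k})R$ is equivalent to the scalar Giaquinta iteration lemma you need; Lemma~\ref{lemma-61} as stated is the Orlicz-integral version and does not apply to sup norms. Finally, mollification justifies finiteness of $\|\nabla u\|_{L^\infty}$. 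One small point: for $i_a\le\frac{n-1}{2n-1}$, Lemma~\ref{lem-32} only gives $\gamma_0<\frac{ni_a}{n-1}$, a proper subinterval of $(0,1-i_a)$. This is harmless, since only $\gamma_0\le 1-i_a$ is used later.

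The absorption step, however, does not close as you wrote it. The coefficient of $\|\nabla u\|_{L^\infty}$ has two sources.
\begin{itemize}
\item The measure term, which Young's inequality makes $\le\varepsilon$.
\item The Dini term $C\sum_j\omega_*(\rho_j)$.
\end{itemize}
If you start the iteration at $\rho_0=R/4$ (or at a radius comparable to $s-r$ in the nested version), the Dini term is comparable to $C\int_0^{R/2}\omega_*(t)\,dt/t$. This is finite by \eqref{eq:dini} but not small, and no choice of $\varepsilon$ affects it. So the $\theta<1$ needed for absorption is not available.

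The missing step is to cap the starting radius by a threshold $\rho_0=\rho_0(\omega)$ such that $C\int_0^{\rho_0}\omega_*(t)\,dt/t$ lies below the absorption threshold. The paper requires this quantity to be $\le 3^{-1-n/\gamma_0}$ and only uses indices $k\ge k_0$, whose radii $2^{-k-2}R$ lie below $\rho_0$. The cost is a constant depending on $\rho_0$ in front of the $L^{\gamma_0}$ term, which is harmless because $R\le 1$. This is exactly why $C$ depends on $\omega$.

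Relatedly, Young's inequality must be applied once, to $\big(\sum_j h_j\big)(\|\nabla u\|_{L^\infty}+1)^{1-i_a}$ after summing. Your Step~1 applies it at every scale, and then summing $\varepsilon\mathcal M$ over infinitely many scales diverges. Your phrase ``after undoing the Young step'' suggests you intend the correct order; the paper proceeds that way.

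Finally, mollify $\cA$ in $x$ as well as $\mu$, as the paper does, so that classical regularity gives $\nabla u_k\in L^\infty_{\mathrm{loc}}$. Convergence $\nabla u_k\to\nabla u$ then uses $\mu\in (W^{1,G})'$, which follows from the bounded Wolff potential.
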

	
	\begin{remark}
		In the power--type case $g(t)=t^{p-1}$ with $1<p<2$, the structural conditions
		\eqref{eq:struct3}--\eqref{eq-0} are satisfied with
		\[
		i_a = s_a = p-1.
		\]
		In this situation, equation \eqref{eq-main} reduces to the $p$--Laplace equation
		\[
		-\DIV\big(a(x)|\nabla u|^{p-2}\nabla u\big)=\mu,
		\]
		where $a(x)$ is a Dini continuous function in $\Omega$. Moreover, in this case the truncated Orlicz--Wolff potential
		$\mathbf{W}^R_{i_a,g}$ is exactly the truncated Riesz potential
		\[
		\mathcal{I}^{R}_1(|\mu|)(x)
		=
		\int_0^R
		\left(
		\frac{|\mu|(B_\rho(x))}{\rho^{\,n-1}}
		\right)
		\frac{d\rho}{\rho}.
		\]
		Consequently, Theorems~\ref{thm:int} and~\ref{thm:int:lip} recover the interior 	pointwise gradient estimate and the interior Lipschitz estimate for solutions to the $p$--Laplace equation with measure data obtained in \cite{Dongz-jems-24}.
	\end{remark}

	The paper is organized as follows. In Section \ref{sec2}, we give some preliminary lemmas.  The proof of Theorem~\ref{thm:int} is in  Section  \ref{sec3}.  Section \ref{sec4} is devoted to the proofs of Theorem~\ref{thm:int:lip}.

	\section{Preliminart Lemmas}\label{sec2}
	In this section we list some definitions and preliminary lemmas. If not specified, a constant $C$ is a positive constant, possibly changing line by line. We denote by $(\cdot)_{B_R(x)}$ the average over the ball $B_R(x)$.

	A function $G:[0,\infty)\to [0,\infty)$ is called a \emph{Young function}  if it has the form 
	\begin{equation}\label{eq18}
		G(t)=\int_0^t g(\tau)\,d\tau
	\end{equation}
	for some non-decreasing, continuous function $g:[0,\infty)\to [0,\infty)$ that is not identically zero. A Young function $G$ is an \emph{$N$-function} if $0 < G(t) < \infty$ for all $t>0$ and
	\[
	\lim_{t\to \infty} \frac{G(t)}{t} = \lim_{t\to 0^+} \frac{t}{G(t)} = \infty.
	\] 
	The \emph{conjugate function} $G^\ast$ of an $N$-function $G$ is defined by
	\[
	G^\ast(\tau) = \sup_{t \ge 0} \bigl( t\tau - G(t) \bigr), \quad \tau \ge 0.
	\]
	It is well-known that $G^\ast$ is also an $N$-function, and Young's inequality holds:
	\begin{equation}\label{eq-Young}
		t\tau \le \epsilon G(t) + C(\epsilon) G^\ast(\tau), \quad \text{for all } t,\tau \ge 0 \text{ and any } \epsilon>0.
	\end{equation}
	Moreover, the conjugate function satisfies
	\[
	G^\ast(g(t)) \le C\, G(t).
	\]
	A Young function $G$ is said to satisfy the \(\Delta_2\)-condition if
	\[
	G(2t) \le C\, G(t) \quad \text{for some } C>0,
	\]
	and the \(\nabla_2\)-condition if
	\[
	G(t) \le \frac{G(\theta t)}{2\theta} \quad \text{for some } \theta>1 \text{ and all } t>0.
	\]
	
	From \cite[Proposition~2.9]{cinch-cpde-11} and \cite[Lemma~1.9]{yao}, we know that $G\in \Delta_2 \cap \nabla_2$. The following lemma, which can be found in \cite[Lemma~2.1]{Kim-jfa-25}, describes the scaling properties of $G$ and $g^{-1}$.
	
	\begin{lemma}\label{lem:Gg}
		Assume that $G$ and $g$ satisfy \eqref{eq:g-cond}. Then, for all $\lambda \le 1$,
		\[
		\lambda^{s_a+1} G(t) \le G(\lambda t) \le \lambda^{i_a+1} G(t), \quad
		c \,\lambda^{1/i_a} g^{-1}(t) \le g^{-1}(\lambda t) \le c^{-1} \lambda^{1/s_a} g^{-1}(t).
		\]
		For all $\lambda\ge 1$,
		\[
		\lambda^{i_a+1} G(t) \le G(\lambda t) \le \lambda^{s_a+1} G(t), \quad
		c \,\lambda^{1/s_a} g^{-1}(t) \le g^{-1}(\lambda t) \le c^{-1} \lambda^{1/i_a} g^{-1}(t).
		\]
	\end{lemma}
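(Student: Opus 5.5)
The plan is to derive all four inequalities from the monotonicity of the quotients $t\mapsto G(t)/t^{i_a+1}$ and $t\mapsto G(t)/t^{s_a+1}$, and then transfer to $g^{-1}$ through the analogous statement for $g$. First I will relate $G$ to $g$. Integrating $t g'(t)\le s_a g(t)$ by parts gives
\[
G(t)\ge \frac{t g(t)}{1+s_a}.
\]
Similarly, $t g'(t)\ge i_a g(t)$ gives
\[
G(t)\le \frac{t g(t)}{1+i_a},
\]
using $\tau g(\tau)\to 0$ as $\tau\to0$, which follows from $g(\tau)\le g(1)\tau^{i_a}$ for $\tau\le1$. Together these yield
\[
i_a+1\le \frac{tG'(t)}{G(t)}=\frac{t g(t)}{G(t)}\le s_a+1 .
\]

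Differentiating $\log G(t)-(i_a+1)\log t$ then shows that $G(t)/t^{i_a+1}$ is nondecreasing, and the same argument shows that $G(t)/t^{s_a+1}$ is nonincreasing. For $\lambda\le1$, comparing these quotients at $\lambda t$ and $t$ gives
\[
\lambda^{s_a+1}G(t)\le G(\lambda t)\le \lambda^{i_a+1}G(t).
\]
For $\lambda\ge1$ the inequalities reverse, giving $\lambda^{i_a+1}G(t)\le G(\lambda t)\le\lambda^{s_a+1}G(t)$.

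For $g$, the bound $i_a\le tg'/g\le s_a$ shows in the same way that $g(t)/t^{i_a}$ is nondecreasing and $g(t)/t^{s_a}$ is nonincreasing. Hence, for $\Lambda\ge1$,
\[
\Lambda^{i_a}g(s)\le g(\Lambda s)\le \Lambda^{s_a}g(s).
\]
Now I invert. Fix $t$, put $s=g^{-1}(t)$, and take $\lambda\ge1$. Choose $\Lambda=\lambda^{1/i_a}$; then $g(\Lambda s)\ge\lambda t$, so $g^{-1}(\lambda t)\le\lambda^{1/i_a}g^{-1}(t)$. Choose $\Lambda=\lambda^{1/s_a}$; then $g(\Lambda s)\le\lambda t$, so $g^{-1}(\lambda t)\ge\lambda^{1/s_a}g^{-1}(t)$. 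These hold with $c=1$. The case $\lambda\le1$ follows by applying the $\lambda\ge1$ bounds with $1/\lambda$ in place of $\lambda$ and $\lambda t$ in place of $t$, which gives $\lambda^{1/i_a}g^{-1}(t)\le g^{-1}(\lambda t)\le \lambda^{1/s_a}g^{-1}(t)$.

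So the statement holds with $c=1$, since $c\le1$ only weakens the bounds. The main point needing care is the boundary term $\tau g(\tau)\to0$ at $0$ together with the strict monotonicity of $g$, which comes from $g'>0$ given $i_a>0$ and ensures that $g^{-1}$ is well defined on $[0,\infty)$; both are immediate. Everything else is routine.
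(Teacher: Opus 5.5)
Your proof is correct. The paper does not prove this lemma; it quotes it from \cite[Lemma~2.1]{Kim-jfa-25}. So your argument fills in the omitted proof along the standard lines:
\begin{itemize}
\item monotonicity of $g(t)/t^{i_a}$ and $g(t)/t^{s_a}$;
\item the bound $1+i_a\le tg(t)/G(t)\le 1+s_a$;
\item monotonicity of $G(t)/t^{1+i_a}$ and $G(t)/t^{1+s_a}$;
\item inversion for $g^{-1}$.
\end{itemize}

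What your route adds is an explicit constant. You get $c=1$, and that is the only sensible reading: at $\lambda=1$ the statement itself forces $c\le 1$. In particular, the $c$ here should not be identified with the constant in \eqref{eq-0}, which can exceed $1$, for example when $g(t)=5t^{1/2}$.

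Two small bookkeeping points.
\begin{itemize}
\item The bound $g(\tau)\le g(1)\tau^{i_a}$ that you use for the boundary term is the monotonicity of $g(t)/t^{i_a}$. You only establish that monotonicity in your next paragraph. This is not circular, since it comes directly from $tg'/g\ge i_a$, but the order should be swapped. Continuity of $g$ at $0$ would also handle the boundary term.
\item For $g^{-1}$ to be defined on all of $[0,\infty)$ you need surjectivity as well as strict monotonicity, i.e.\ $g(0)=0$ and $g(t)\to\infty$. Both follow from $g(1)\min\{t^{i_a},t^{s_a}\}\le g(t)\le g(1)\max\{t^{i_a},t^{s_a}\}$, which your monotonicity facts already give.
\end{itemize}
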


	The Orlicz space $L^G(\Omega)$ is defined as 
	\begin{equation*}
		L^{G}(\Omega):=\{u:\Omega \to \mathbb{R}~\text{measurable}:\varrho_G(u)< \infty \},
	\end{equation*}
	endowed with the Luxemburg norm 
	\begin{equation*}
		\|u\|_{L^G(\Omega)}:=\inf\left\{\lambda>0:\varrho_{G}\left(\frac{|u|}{\lambda}\right)\leq 1\right\},
	\end{equation*}
	where
	\begin{equation*}
		\varrho_{G}(u):= \int_{\Omega}G(|u|)\,dx
	\end{equation*}
	is called $\varrho_{G}$-modular. We define the Orlicz-Sobolev space $W^{1,G}(\Omega)$ as follows:
	\begin{equation*}
		W^{1,G}(\Omega)=\left\{u\in W_{\rm{loc}}^{1,1}(\Omega):|u|,|\nabla u|\in L^G(\Omega)\right\},
	\end{equation*}
	where the gradient is understood in the distributional sense. The norm on this space is given by  
	\begin{equation*}
		\|u\|_{W^{1,G}(\Omega)}:=\inf\left\{\lambda>0:\varrho_{G}\left(\frac{|u|}{\lambda}\right)+\varrho_{G}\left(\frac{|\nabla u|}{\lambda}\right)\leq 1
	\right\}.
	\end{equation*}
	The space $W_0^{1,G}(\Omega)$ denotes the closure of $C^{\infty}_0(\Omega)$ with respect to this norm.

	The following lemma comes from \cite[Lemma~2.2]{YZ26}.
	\begin{lemma}
		Assume that $\cA$ satisfies \emph{(1.2)--(1.7)} with $s_a<1$, and let $G$ be defined in \eqref{eq18}. Then there exist constants $C=C(n,i_a,s_a)>0$,  such that the following holds for all
		$\xi,\eta\in\R^n$.
		\medskip
		\noindent
		\textnormal{(i)}
		For every $\varepsilon\in(0,1)$,
		\begin{equation}\label{eq-2.3}
			\begin{split}
				G(|\xi-\eta|)&\leq 	C\,\varepsilon^{\frac{i_a-1}{1+i_a}}\frac{g(|\xi|+|\eta|)}{|\xi|+|\eta|}\,|\xi-\eta|^2 + \varepsilon G(|\eta|) \\
				&	\le C\,\varepsilon^{\frac{i_a-1}{1+i_a}}
				\bigl(\cA(x,\xi)-\cA(x,\eta)\bigr)\cdot(\xi-\eta)
				+\varepsilon\,G(|\eta|),
		\end{split}\end{equation}
		and
		\begin{equation}\label{2.4}
			\bigl(\cA(x,\xi)-\cA(x,\eta)\bigr)\cdot(\xi-\eta)
			\ge
			C\,|V(\xi)-V(\eta)|^2,
		\end{equation}
		where
		\[
		V(z):=\sqrt{\frac{g(|z|)}{|z|}}\,z .
		\]
		
		\medskip
		\noindent
		\textnormal{(ii)}
		We have
		\begin{equation}\label{2.5}
			C\,\frac{g(|\xi|+|\eta|)}{|\xi|+|\eta|}\,|\xi-\eta|^2
			\le
			|V(\xi)-V(\eta)|^2
			\le
			C\,\frac{g(|\xi|+|\eta|)}{|\xi|+|\eta|}\,|\xi-\eta|^2,
		\end{equation}
		and
		\begin{equation}\label{2.6}
			C\,\sqrt{\frac{g(|\xi|+|\eta|)}{|\xi|+|\eta|}}\,|\xi-\eta|^2
			\le
			\bigl(V(\xi)-V(\eta)\bigr)\cdot(\xi-\eta)
			\le
			C\,\sqrt{\frac{g(|\xi|+|\eta|)}{|\xi|+|\eta|}}\,|\xi-\eta|^2.
		\end{equation}
	\end{lemma}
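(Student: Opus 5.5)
Everything here reduces to the one-variable function $t\mapsto g(t)/t$ and to the derivative of $\cA$ along the segment joining $\eta$ and $\xi$. Throughout I use that $g(t)/t$ is nonincreasing, since $s_a<1$ gives $(g(t)/t)'=(tg'-g)/t^2<0$, and that $g(t)/t\sim g'(t)$ with constants depending on $i_a,s_a$.

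\emph{Step 1: \eqref{2.5} and \eqref{2.6}.} Write $V(z)=\psi(|z|)z$ with $\psi(t)=\sqrt{g(t)/t}$. Its differential is
\[
DV(z)=\psi(|z|)\Big(I+\frac{t\psi'(t)}{\psi(t)}\Big|_{t=|z|}\frac{z\otimes z}{|z|^2}\Big),
\qquad
\frac{t\psi'}{\psi}=\frac12\Big(\frac{tg'}{g}-1\Big)\in\Big[\frac{i_a-1}{2},\frac{s_a-1}{2}\Big]\subset\Big(-\frac12,0\Big).
\]
So the eigenvalues of $DV(z)$ lie in $[\tfrac{1+i_a}{2}\psi(|z|),\psi(|z|)]$, and $DV$ is symmetric positive definite. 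Set $z_\tau=\eta+\tau(\xi-\eta)$ and write
\[
V(\xi)-V(\eta)=\int_0^1 DV(z_\tau)(\xi-\eta)\,d\tau .
\]
This yields both bounds in \eqref{2.5}--\eqref{2.6}, with $\psi(|\xi|+|\eta|)$ replaced by $\int_0^1\psi(|z_\tau|)\,d\tau$, provided the standard equivalence
\[
\int_0^1\frac{g(|z_\tau|)}{|z_\tau|}\,d\tau\simeq\frac{g(|\xi|+|\eta|)}{|\xi|+|\eta|}
\]
holds, and likewise with the square root. For the lower bound, monotonicity gives $g(|z_\tau|)/|z_\tau|\ge g(|\xi|+|\eta|)/(|\xi|+|\eta|)$, since $|z_\tau|\le|\xi|+|\eta|$. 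For the upper bound, split off the set of $\tau$ where $|z_\tau|\ge\frac14(|\xi|+|\eta|)$; there $g(|z_\tau|)/|z_\tau|$ is comparable to $g(|\xi|+|\eta|)/(|\xi|+|\eta|)$ by Lemma~\ref{lem:Gg}. On the complement, which is an interval of length $\lesssim(|\xi|+|\eta|)/|\xi-\eta|$, one integrates $|z_\tau|^{s_a-1}$-type bounds, which is integrable in one dimension because $s_a-1>-1$.

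I expect this averaging equivalence to be the main technical point, because the integrand is singular at $z_\tau=0$. Its integrability relies on $i_a>0$, through the bound $g(t)/t\le C t^{i_a-1}(g(T)/T^{i_a})$ for $t\le T$.

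\emph{Step 2: \eqref{2.4}.} Write
\[
\bigl(\cA(x,\xi)-\cA(x,\eta)\bigr)\cdot(\xi-\eta)=\int_0^1\langle D_\xi\cA(x,z_\tau)(\xi-\eta),\xi-\eta\rangle\,d\tau .
\]
By \eqref{eq:struct3} this is at least $\lambda|\xi-\eta|^2\int_0^1 g'(|z_\tau|)\,d\tau\ge\lambda i_a|\xi-\eta|^2\int_0^1 g(|z_\tau|)/|z_\tau|\,d\tau$. Monotonicity and \eqref{2.5} then give \eqref{2.4}; only the easy lower bound of Step 1 is needed here.

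\emph{Step 3: \eqref{eq-2.3}.} The second inequality in \eqref{eq-2.3} is exactly Step 2 combined with \eqref{2.5}. For the first, put $a=|\xi-\eta|$ and $b=|\xi|+|\eta|$, and split into two cases.

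If $a\le\delta b$, then $G(a)\le a g(a)$. Since $a\le b$ and $g(t)/t$ is nonincreasing, $g(a)/a\le Cg(b)/b$ fails in the wrong direction, so instead I use Lemma~\ref{lem:Gg}:
\[
G(a)\lesssim a^2\,\frac{g(a)}{a}\le a^2 (a/b)^{i_a-1}\frac{g(b)}{b}.
\]
This is the wrong way again, because the factor $(a/b)^{i_a-1}$ is large. So instead I argue by Young's inequality directly: $G(a)=\frac{G(a)}{a^2}\cdot a^2$, and compare the homogeneity of $G(a)\sim a^2\,g(a)/a$ with the quantity $a^2g(b)/b$. Write $G(a)\le (a^2 g(b)/b)^{\theta}\,(G(b))^{1-\theta}$-type interpolation, using $G(a)\lesssim a^{1+i_a}b^{-(1+i_a)}G(b)$ for $a\le b$. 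Take $\theta=(1+i_a)/2$, which gives exactly $G(a)\lesssim (a^2g(b)/b)^{(1+i_a)/2}G(b)^{(1-i_a)/2}$. Young's inequality with exponents $\frac{2}{1+i_a}$ and $\frac{2}{1-i_a}$ then yields
\[
G(a)\le C\varepsilon^{\frac{i_a-1}{1+i_a}}\,a^2\,\frac{g(b)}{b}+\varepsilon\,G(b).
\]
This interpolation works for all $a\le 2b$, so no case split is needed.

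Finally, $G(b)\le C(G(|\xi|)+G(|\eta|))$, and $G(|\xi|)\le C(G(a)+G(|\eta|))$. For small $\varepsilon$ I absorb the resulting $C\varepsilon G(a)$ term into the left side, which produces $\varepsilon G(|\eta|)$ after renaming $\varepsilon$.
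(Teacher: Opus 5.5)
Your proof is correct. The paper has no proof of its own to compare against: the lemma is quoted from [YZ26, Lemma~2.2].

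\emph{Comparison with the usual route.} In the Orlicz literature such statements are usually obtained through shifted $N$-functions. One uses the equivalences $|V(\xi)-V(\eta)|^2\simeq\varphi_{|\eta|}(|\xi-\eta|)\simeq\frac{g(|\xi|+|\eta|)}{|\xi|+|\eta|}|\xi-\eta|^2$ together with a Young inequality for the shifted functions. You instead use the classical segment-integration argument, in three pieces:
\begin{enumerate}
\item the spectral bounds $\frac{1+i_a}{2}\psi(|z|)\le DV(z)\le\psi(|z|)$;
\item the averaging estimate $\int_0^1\psi(|z_\tau|)\,d\tau\simeq\psi(|\xi|+|\eta|)$, which needs only that $g(t)/t^{i_a}$ is nondecreasing and $g(t)/t$ is nonincreasing;
\item for \eqref{eq-2.3}, the interpolation
\[
G(a)\le\Bigl(\frac ab\Bigr)^{1+i_a}G(b)\le\Bigl(\frac{a^2g(b)}{b}\Bigr)^{\frac{1+i_a}{2}}G(b)^{\frac{1-i_a}{2}},\qquad a=|\xi-\eta|\le b=|\xi|+|\eta|,
\]
which after Young with exponents $\frac{2}{1+i_a},\frac{2}{1-i_a}$ gives exactly the factor $\varepsilon^{\frac{i_a-1}{1+i_a}}$.
\end{enumerate}
Your route is more elementary and fully self-contained. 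The shifted-function approach needs more setup, but it yields a reusable calculus (change of shift, Young inequalities for $\varphi_a$) that is convenient in comparison arguments.

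\emph{Slips to clean up (none is a gap).}
\begin{itemize}
\item \emph{Exponent near the origin (Step 1).} The bound near the origin is $g(t)/t\le\frac{g(b)}{b}(t/b)^{i_a-1}$, from monotonicity of $g(t)/t^{i_a}$. The relevant exponent is $i_a-1$ (or $\frac{i_a-1}{2}$ for $\psi$), not $s_a-1$; the latter only gives a lower bound there.
\item \emph{The set near the origin (Step 1).} The set $\{|z_\tau|<b/4\}$ is nonempty only when $|\xi-\eta|>b/4$. This fact, not the length bound alone, is what gives $\int|z_\tau|^{i_a-1}\,d\tau\lesssim b^{i_a-1}$ on that set.
\item \emph{Which direction Step 2 needs.} Your remark that only the easy direction is needed is backwards. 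After monotonicity you have $(\cA(x,\xi)-\cA(x,\eta))\cdot(\xi-\eta)\ge\lambda i_a\frac{g(b)}{b}|\xi-\eta|^2$, which is already the second inequality of \eqref{eq-2.3}. Passing from this to $|V(\xi)-V(\eta)|^2$ uses the upper half of \eqref{2.5}, i.e.\ the hard direction of the averaging estimate. You can avoid it entirely: since $\|DV(z)\|\le\psi(|z|)$ and $g'(t)\ge i_a\psi(t)^2$, Jensen's inequality gives
\[
(\cA(x,\xi)-\cA(x,\eta))\cdot(\xi-\eta)\ge\lambda i_a|\xi-\eta|^2\Bigl(\int_0^1\psi(|z_\tau|)\,d\tau\Bigr)^2\ge\lambda i_a|V(\xi)-V(\eta)|^2 .
\]
This constant depends on $\lambda$, which the statement's $C(n,i_a,s_a)$ omits.
\item \emph{Segments through the origin.} When $\xi$ and $\eta$ are antiparallel, the segment crosses $0$, where $V$ and $\cA(x,\cdot)$ need not be differentiable. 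The fundamental theorem of calculus along $z_\tau$ still holds: both maps are continuous at $0$, and their $\tau$-derivatives are dominated, up to constants, by the integrable powers $|z_\tau|^{\frac{i_a-1}{2}}$ and $|z_\tau|^{i_a-1}$.
\item \emph{Step 3.} Drop the two abandoned attempts. The bound $a\le b$ holds by the triangle inequality. The absorption covers every $\varepsilon\in(0,1)$, because the linear renaming $\varepsilon\mapsto c\,\varepsilon$ only changes the constant in front of $\varepsilon^{\frac{i_a-1}{1+i_a}}$.
\end{itemize}
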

	
	The proof of the following iteration lemma  in the standard case can be found in~\cite[Lemma 6.1]{book-GE-direct}. For the sake of completeness, we include a proof in the setting of Orlicz-growth equations. 
	\begin{lemma}\label{lemma-61}
		Let $\rho_1>\rho>0$ with $B_{\rho_1}\Subset \Omega$, $\xi \in W_{\rm loc}^{1,G}(\Omega)$, and let $G$ be defined as above.  
		Assume for all $\rho \le t < s \le \rho_1$, we have
		\[
		\int_{B_t}G(\nabla \xi)\,dx \le \theta \int_{B_s}G(|\nabla \xi|)\,dx +\int_{B_{s}}G\left(\frac{A}{s-t}\right)\,dx.
		\]
		with $A\ge 0$ and $\theta\in(0,1)$.  Then there exists a constant $C>0$, depending on $\theta$ and $s_a$, such that
		\[
		\int_{B_{\rho}}G(\nabla \xi)\,dx \leq C\int_{B_{\rho_1}}G\left(\frac{A}{\rho_1-\rho}\right)\,dx.
		\]
	\end{lemma}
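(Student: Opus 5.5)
We follow the classical iteration argument of \cite[Lemma 6.1]{book-GE-direct}, adapting the scaling of the error term through Lemma~\ref{lem:Gg}. Fix $\tau\in(0,1)$, to be chosen in terms of $\theta$ and $s_a$. Define the increasing sequence of radii
\[
t_0=\rho,\qquad t_{i+1}=t_i+(1-\tau)\tau^{i}(\rho_1-\rho),
\]
so that $t_i\uparrow\rho_1$ and $t_{i+1}-t_i=(1-\tau)\tau^i(\rho_1-\rho)$.

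Apply the hypothesis with $t=t_i$ and $s=t_{i+1}$. Since $B_{t_{i+1}}\subset B_{\rho_1}$, this gives
\[
\int_{B_{t_i}}G(|\nabla\xi|)\,dx\le\theta\int_{B_{t_{i+1}}}G(|\nabla\xi|)\,dx+\int_{B_{\rho_1}}G\Big(\frac{A\,\tau^{-i}}{(1-\tau)(\rho_1-\rho)}\Big)dx .
\]
Because $\tau^{-i}(1-\tau)^{-1}\ge1$, the upper bound $G(\lambda t)\le\lambda^{s_a+1}G(t)$ for $\lambda\ge1$ from Lemma~\ref{lem:Gg} lets us bound the last term by
\[
(1-\tau)^{-(1+s_a)}\,\tau^{-i(1+s_a)}\int_{B_{\rho_1}}G\Big(\frac{A}{\rho_1-\rho}\Big)dx .
\]
Here $s_a<1$, but we only need $s_a<\infty$.

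Iterating $k$ times from $i=0$ yields
\[
\int_{B_\rho}G(|\nabla\xi|)\,dx\le\theta^k\int_{B_{t_k}}G(|\nabla\xi|)\,dx+(1-\tau)^{-(1+s_a)}\sum_{i=0}^{k-1}\big(\theta\tau^{-(1+s_a)}\big)^i\int_{B_{\rho_1}}G\Big(\frac{A}{\rho_1-\rho}\Big)dx .
\]
Now choose $\tau\in(0,1)$ so that $\theta\tau^{-(1+s_a)}<1$, for instance $\tau^{1+s_a}=(1+\theta)/2$. Then the geometric series converges, with sum depending only on $\theta$ and $s_a$.

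The remaining point is to show that the first term vanishes as $k\to\infty$. Since $B_{\rho_1}\Subset\Omega$ and $\xi\in W^{1,G}_{\rm loc}(\Omega)$, we have $\int_{B_{t_k}}G(|\nabla\xi|)\,dx\le\int_{B_{\rho_1}}G(|\nabla\xi|)\,dx<\infty$. Hence $\theta^k$ times this quantity tends to $0$.

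Letting $k\to\infty$ gives the claim with $C=(1-\tau)^{-(1+s_a)}\big(1-\theta\tau^{-(1+s_a)}\big)^{-1}$, which depends only on $\theta$ and $s_a$.

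The only step that differs from the power case is the extraction of the factor $\tau^{-i}$ from inside $G$. Lemma~\ref{lem:Gg} handles this with exponent $s_a+1$, and this exponent is exactly what forces $\tau$ to depend on $s_a$. I expect no real obstacle beyond this.
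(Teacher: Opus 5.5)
Your proposal is correct and follows essentially the same route as the paper: the same geometric radii $t_i=\rho+(1-\tau^i)(\rho_1-\rho)$, the same scaling bound $G(\lambda t)\le\lambda^{1+s_a}G(t)$ for $\lambda\ge1$ from Lemma~\ref{lem:Gg}, and the same choice of $\tau$ with $\theta\tau^{-(1+s_a)}<1$. You also justify explicitly, via $\int_{B_{\rho_1}}G(|\nabla\xi|)\,dx<\infty$, that the term $\theta^k\int_{B_{t_k}}G(|\nabla\xi|)\,dx$ tends to zero, which the paper only states as ``$Z$ is bounded''.
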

	\begin{proof}
		Fix a parameter $\tau<1$, to be chosen later.
		Define a sequence 
		\[
		t_k:=\rho+(1-\tau^{k})(\rho_1-\rho), \qquad k=0,1,2,\cdots.
		\]
		Then $t_0=\rho$, $t_k\to \rho_1$, and
		\[
		t_{k+1}-t_k=\tau^{k}(1-\tau)(\rho_1-\rho).
		\]
		Applying the assmution with $t=t_k$ and $s=t_{k+1}$, we obtain
		\begin{equation*}
			\int_{B_{t_{k}}}G(|\nabla \xi|)\,dx \leq  \theta\int_{B_{t_{k+1}}}G(|\nabla \xi
			|)\,dx+c\int_{B_{t_{k+1}}}G\!\left(\frac{A}{\tau^k(1-\tau)(\rho_1-\rho)}\right)\,dx.
		\end{equation*}			
		Denote
		\[Z(t_{k})=\int_{B_{t_k}}G(|\nabla \xi|)\,dx.
		\]
		Iterating the above inequality yields, for any $m\in\mathbb N$,
		\[
		Z(\rho)=Z(t_0)
		\le C\sum_{k=0}^{m-1}\theta^k
		\int_{B_{\rho_1}}G\!\left(\frac{A\tau^{-k}}{(1-\tau)(\rho_1-\rho)}\right)\,dx
		+\theta^m Z(t_m).
		\]
		Since $Z$ is bounded and $\theta\in(0,1)$, letting $m\to\infty$ gives
		\[
		Z(\rho)\le C\sum_{k=0}^{\infty}\theta^k\int_{B_{\rho_1}}
		G\!\left(\frac{\tau^{-k}A}{(1-\tau)(\rho_1-\rho)}\right)\,dx.
		\]
		
		By Lemma~\ref{lem:Gg},	we  known that there exists a constant $C>0$, depending only on $s_a$,
		such that
		\[
		G(\lambda t)\le C\,\lambda^{s_a+1}G(t)
		\qquad\text{for all }\lambda\ge1,\ t>0.
		\]
		Consequently,
		\[
		G\!\left(\frac{\tau^{-k}A}{(1-\tau)(\rho_1-\rho)}\right)
		\le \frac{\tau^{-k(1+s_a)}}{(1-\tau)^{1+s_a}}
		G\!\left(\frac{A}{\rho_1-\rho}\right).
		\]
		Therefore,
		\[
		Z(\rho)\le \frac{C}{(1-\tau)^{1+s_a}}\sum_{k=0}^{\infty}\bigl(\theta\,\tau^{-(s_a+1)}\bigr)^k\int_{B_{\rho_1}}G\!\left(\frac{A}{\rho_1-\rho}\right)\,dx.
		\]
		Choosing $\tau>\theta^{\frac{1}{s_a+1}}$ such that $\theta\,\tau^{-(s_a+1)}<1$, we deduce that 
		\[
		Z(\rho)=\int_{B_{\rho}}G(|\nabla \xi|)\,dx \le C\int_{B_{\rho_1}}G\!\left(\frac{A}{\rho_1-\rho}\right)\,dx,
		\]
		where $C$ depends only on $\theta$ and $s_a$.
	\end{proof}

Next, we present interior oscillation estimates for solutions to the homogeneous equation
\begin{equation}\label{eq-s2-1}
	-\DIV (\cA_0(\nabla v))=0 \quad \text{in }\Omega,
\end{equation}
where $\cA_0=\cA_0(\xi)$ is a vector field independent of $x$ and satisfying \eqref{eq:struct3}.

For the case $0<i_a<1$, a detailed proof of the following oscillation estimate can be found in 
\cite[Theorem~4.1]{A-26-ar}. 
When $i_a\ge 1$, we refer to \cite[Lemma~4.1]{BP-calc-15} and \cite[Theorem~3.1]{DM-ameriacn-11}. 
In the singular $p$--Laplacian regime $1<p<2$, see \cite[Theorem~3.3]{DM-jfa-10}. 
Estimates of this type, involved with different exponents, were originally developed in 
\cite{DM-amer-93, KZ-cpde-99, Lie-cpde-91}.

\begin{lemma}[Theorem~4.1.~\cite{A-26-ar}]\label{lem-24}
	Let $v\in W^{1,G}_{\mathrm{loc}}(\Omega)$ be a solution of \eqref{eq-s2-1}. 
	Then there exist constants $C>1$ and $\alpha\in(0,1)$ such that
	$v\in C^{1,\alpha}_{\mathrm{loc}}(\Omega)$. 
	Moreover, for every ball $B_R(x_0)\Subset\Omega$ and every $r\in(0,R)$, one has
	\[
	\fint_{B_r(x_0)}
	\big|\nabla v-(\nabla v)_{B_r(x_0)}\big|\,dx
	\le
	C\Big(\frac{r}{R}\Big)^{\alpha}
	\fint_{B_R(x_0)}
	\big|\nabla v-(\nabla v)_{B_R(x_0)}\big|\,dx,
	\]
	with $C$ and $\alpha$ depending only on $n,\Lambda,\lambda$, $i_a$ and $s_a$.
\end{lemma}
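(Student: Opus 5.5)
We prove Lemma~\ref{lem-24} with the standard two-regime scheme for singular and degenerate problems, in the Orlicz form. The $C^{1,\alpha}$ regularity itself comes from Lieberman-type theory for equations with $g$-growth. Under \eqref{eq:struct3} and \eqref{eq:g-cond}, the difference quotients of $v$ lie in $W^{1,2}_{\rm loc}$ with weight $g'(|\nabla v|)$. Hence $V(\nabla v)\in W^{1,2}_{\rm loc}$, and each $\partial_k v$ solves the linearized equation $\DIV(D_\xi\cA_0(\nabla v)\nabla\partial_k v)=0$.

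\emph{Step 1: local boundedness of the gradient.} We show that $\sup_{B_{R/2}}|\nabla v|\le C\fint_{B_R}|\nabla v|\,dx$. A Moser or De Giorgi iteration on $G(|\nabla v|)$ gives the bound with an $L^G$ average on the right. The $L^G$ average is then reduced to an $L^1$ average by the interpolation-and-iteration device of Lemma~\ref{lemma-61}, using the scaling in Lemma~\ref{lem:Gg}.

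\emph{Step 2: the De Giorgi dichotomy.} Put $M(r)=\sup_{B_r}|\nabla v|$. Either (a) $|\nabla v|\ge M/4$ on a large portion of $B_r$, or (b) $|\nabla v|\le M/2$ on a fixed fraction of the ball for some direction.
\begin{itemize}
\item In case (b), apply De Giorgi to the subsolutions $(\partial_k v-M/2)_+$. The coefficients $D_\xi\cA_0$ have ellipticity ratio bounded by $\Lambda/\lambda$ after normalization by $g'(M)$, using $g'(t)\simeq g(t)/t$ and the comparability $g(t)/t\simeq g(M)/M$ on $\{|\nabla v|\sim M\}$. This gives $M(r/2)\le\eta M(r)$ with $\eta<1$.
\item In case (a), $|\nabla v|\sim M$ on most of the ball. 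The linearized equation is then uniformly elliptic there, and Campanato estimates for $V(\nabla v)$ give excess decay of $\fint|\nabla v-(\nabla v)_r|^2$.
\end{itemize}
Iterating the dichotomy yields
\[
\operatorname{osc}_{B_r}\nabla v\le C\Big(\frac rR\Big)^\alpha \sup_{B_{R/2}}|\nabla v|.
\]

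\emph{Step 3: the stated estimate.} Passing from the sup bound to the mean-oscillation form is the main obstacle. Apply Steps 1--2 to $v-q\cdot x$ for a constant vector $q$. Since $\cA_0$ is $x$-independent, this function solves an equation of the same type with $\tilde\cA(\xi)=\cA_0(\xi+q)$. However, $\tilde\cA$ no longer satisfies \eqref{eq:struct3} with $g$ but with the shifted function $g_{|q|}(t)\simeq g(|q|+t)t/(|q|+t)$. The key check is that the shifted family satisfies \eqref{eq:g-cond} with exponents comparable to $i_a,s_a$, uniformly in $|q|$, so that all constants in Steps 1--2 are uniform.

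Split into two cases, using $q=(\nabla v)_{B_R}$:
\begin{itemize}
\item If $|q|\le K\fint_{B_R}|\nabla v-q|\,dx$, the bound follows directly from Steps 1--2, since $\sup_{B_{R/2}}|\nabla v|\lesssim\fint_{B_R}|\nabla v-q|\,dx$.
\item Otherwise, the shifted problem is nondegenerate. Steps 1--2 for $v-q\cdot x$ give the oscillation bound in terms of $\fint_{B_R}|\nabla v-q|\,dx$.
\end{itemize}
Finally, $\fint_{B_r}|\nabla v-(\nabla v)_{B_r}|\,dx\le2\operatorname{osc}_{B_r}\nabla v$ for $r\le R/2$. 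The range $r\in(R/2,R)$ is trivial by enlarging $C$.
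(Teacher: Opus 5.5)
The paper gives no proof of this lemma; it imports it from Theorem~4.1 of the cited preprint. Your Steps 1--2 are a fair summary of the classical Lieberman/DiBenedetto theory. The genuine gap is in Step 3.

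You claim that $\tilde\cA(\xi)=\cA_0(\xi+q)$ satisfies \eqref{eq:struct3} with the shifted function $g_{|q|}$, uniformly in $q$. This is false in the singular regime. The integrated bounds do hold uniformly, namely $|\tilde\cA(\xi)-\tilde\cA(0)|\lesssim g_{|q|}(|\xi|)$ and $(\tilde\cA(\xi)-\tilde\cA(0))\cdot\xi\simeq G_{|q|}(|\xi|)$. The pointwise bound on $D_\xi\tilde\cA$, which is what Steps 1--2 use after differentiating the equation, does not hold. By the ellipticity in \eqref{eq:struct3} and $tg'(t)\ge i_a g(t)$,
\[
|D_\xi\tilde\cA(\xi)|\,|\xi|\ \ge\ \lambda\, i_a\,\frac{g(|\xi+q|)}{|\xi+q|}\,|\xi|\ \longrightarrow\ \infty\qquad(\xi\to-q),
\]
because $g(t)/t\to\infty$ as $t\to0$ when $s_a<1$. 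By contrast, $g_{|q|}(|q|)\simeq g(2|q|)$ is finite. Also $\sup_{t>0}t g_{|q|}'(t)/g_{|q|}(t)=1$, so even \eqref{eq:g-cond} with an upper index below $1$ is lost.

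The singular set of the shifted problem is $\{\nabla(v-q\cdot x)=-q\}$, not the origin. It can lie inside $\{|\nabla(v-q\cdot x)|\sim M\}$, exactly where Steps 1--2 need the linearized coefficients to be comparable to $g'(M)$. So neither the Moser iteration nor the dichotomy runs with uniform constants for $v-q\cdot x$. Consequently ``the shifted problem is nondegenerate'' is asserted, not proved. The hypothesis $|q|>K\fint_{B_R}|\nabla v-q|\,dx$ controls $\nabla v-q$ only on average. The pointwise bound $\sup_{B_{R/2}}|\nabla v-q|\lesssim\fint_{B_R}|\nabla v-q|\,dx$ that would give nondegeneracy is exactly what the invalid shifted Step 1 was supposed to supply.

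The repair is to never shift the operator. In the second case, apply Steps 1--2 to $v$ itself:
\[
\operatorname{osc}_{B_{\delta R}}\nabla v\le C\delta^{\alpha}\sup_{B_{R/2}}|\nabla v|\le C\delta^{\alpha}(1+K^{-1})|q|\le|q|/4
\]
for $\delta$ small depending only on the data. Also $\fint_{B_{\delta R}}|\nabla v-q|\,dx\le\delta^{-n}K^{-1}|q|\le|q|/4$ once $K\ge4\delta^{-n}$. Since $\nabla v$ is continuous, these give $|\nabla v-q|\le|q|/2$ everywhere on $B_{\delta R}$.

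There $|\nabla v|\in[|q|/2,3|q|/2]$. By \eqref{eq:struct3}, \eqref{eq:g-cond} and Lemma~\ref{lem:Gg}, the eigenvalues of $D_\xi\cA_0(\nabla v(x))$ are all comparable to $g(|q|)/|q|$, with ratio depending only on $\lambda,\Lambda,i_a,s_a$. Constants solve the linearized equation, so De Giorgi--Nash--Moser applied to $\partial_k v-q_k$ gives $\operatorname{osc}_{B_\rho}\nabla v\le C(\rho/\delta R)^{\alpha_0}\fint_{B_{\delta R}}|\nabla v-q|\,dx$ for $\rho\le\delta R/2$. The radii $\rho\in(\delta R/2,R)$ cost only a constant depending on $\delta$.

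With $q=(\nabla v)_{B_R}$ this is the claimed estimate with exponent $\min\{\alpha,\alpha_0\}$. This is the usual degenerate/nondegenerate alternative, and it uses only the unshifted structure.
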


The previous lemma yields the following consequence. We refer to  \cite[Corollary~2.3]{Dongz-jems-24} for the details.

\begin{corollary}
	Under the assumptions of Lemma~\ref{lem-24}, there exist constants $C>1$ and 
	$\alpha\in(0,1)$, depending only on $n,\lambda,\Lambda, i_a$ and  $s_a$, such that 
	for every ball $B_R(x_0)\subset\Omega$ the estimate
	\begin{equation}\label{eq-sec2-231}
		R^{\alpha}[\nabla v]_{C^{\alpha}(B_{R/2}(x_0))}
		\le
		C\fint_{B_R(x_0)}
		\big|\nabla v-(\nabla v)_{B_R(x_0)}\big|\,dx
	\end{equation}
	holds. In addition, for every $r\in[R/2,R)$ we have
	\begin{equation}\label{eq-sec2-232}
		[\nabla v]_{C^{\alpha}(B_r(x_0))}
		\le
		C\frac{R^{\,n+1-\alpha}}{(R-r)^{n+1}}
		\fint_{B_R(x_0)}
		\big|\nabla v-(\nabla v)_{B_R(x_0)}\big|\,dx.
	\end{equation}
\end{corollary}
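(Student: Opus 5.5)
The plan is to derive both estimates of the Corollary from Lemma~\ref{lem-24} by a covering argument combined with Campanato's characterization of H\"older spaces. We may normalize $x_0=0$. Write
\[
E(x,\rho):=\fint_{B_\rho(x)}|\nabla v-(\nabla v)_{B_\rho(x)}|\,dx .
\]
Two elementary facts will be used throughout. First, for $B_\rho(y)\subset B_R$ with $\rho\ge cR$, we have $E(y,\rho)\le 2(R/\rho)^n E(0,R)$. This follows from inserting the constant $(\nabla v)_{B_R}$ and using $\fint_{B}|f-(f)_B|\le 2\inf_q\fint_B|f-q|$. Second, Lemma~\ref{lem-24} applies on every ball $B_\rho(y)\Subset\Omega$, because it is translation invariant and its constants depend only on the data.

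\textbf{Step 1 (proof of \eqref{eq-sec2-231}).} Fix $y\in B_{R/2}$ and put $\rho_0=R/2$, so that $B_{\rho_0}(y)\subset B_R$. For $r\le\rho_0$, Lemma~\ref{lem-24} together with the first fact gives
\[
E(y,r)\le C(r/\rho_0)^\alpha E(y,\rho_0)\le C\,2^{n+\alpha}\,(r/R)^\alpha E(0,R).
\]
Hence the Campanato seminorm of $\nabla v$ on $B_{R/2}$ (centers in $B_{R/2}$, radii up to $R/2$) is bounded by $C R^{-\alpha}E(0,R)$. By Campanato's theorem, in its standard telescoping form, $|(\nabla v)_{B_{2^{-k}r}(y)}-(\nabla v)_{B_{2^{-k-1}r}(y)}|\le 2^n E(y,2^{-k}r)$. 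Summing this, and comparing the averages at two points $y,z$ with $|y-z|=r$ on a common ball of radius $2r$, we get $|\nabla v(y)-\nabla v(z)|\le C r^\alpha R^{-\alpha}E(0,R)$ for $|y-z|$ small compared to $R$. For $|y-z|\gtrsim R$ the same bound follows from the triangle inequality through $(\nabla v)_{B_{R/2}(y)}$ and $(\nabla v)_{B_{R/2}(z)}$, whose difference is controlled by $C E(0,R)$. Pointwise values are legitimate since $\nabla v$ is continuous. This yields \eqref{eq-sec2-231}.

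\textbf{Step 2 (proof of \eqref{eq-sec2-232}).} Let $r\in[R/2,R)$ and set $d=R-r$. For each $y\in B_r$ the ball $B_d(y)$ lies in $B_R$. Applying \eqref{eq-sec2-231} on $B_d(y)$ gives
\[
[\nabla v]_{C^\alpha(B_{d/2}(y))}\le C d^{-\alpha}E(y,d)\le C d^{-\alpha}(R/d)^n E(0,R).
\]
For $y,z\in B_r$ with $|y-z|<d/2$, both points lie in $B_{d/2}(y)$, and so
\[
|\nabla v(y)-\nabla v(z)|\le C\,\frac{R^n}{d^{n+\alpha}}\,|y-z|^\alpha E(0,R).
\]
When $|y-z|\ge d/2$, I use $\sup_{B_r}|\nabla v-(\nabla v)_{B_R}|$ instead. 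For each $y\in B_r$, the difference $|\nabla v(y)-(\nabla v)_{B_{d/2}(y)}|$ is bounded by the local H\"older bound, giving $\le C (R/d)^n E(0,R)$. Likewise $|(\nabla v)_{B_{d/2}(y)}-(\nabla v)_{B_R}|\le C(R/d)^nE(0,R)$. Therefore
\[
|\nabla v(y)-\nabla v(z)|\le C(R/d)^nE(0,R)\le C\frac{R^n}{d^{n}}\Big(\frac{2|y-z|}{d}\Big)^\alpha E(0,R).
\]
Finally, since $d\le R$, we have $R^n d^{-n-\alpha}\le R^{n+1-\alpha}d^{-(n+1)}$, which is exactly the stated form. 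The bound is in fact slightly better than claimed.

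The only genuine work is the Campanato-type passage from mean-oscillation decay to the H\"older seminorm in Step 1. This is routine but needs care with balls near the boundary of $B_{R/2}$; I would handle it exactly as above by restricting to radii $\le R/2$ about centers in $B_{R/2}$.
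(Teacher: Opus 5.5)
Your proposal is correct. Applying Lemma~\ref{lem-24} on the balls $B_{R/2}(y)$, $y\in B_{R/2}(x_0)$, and then using the Campanato telescoping gives \eqref{eq-sec2-231}; these balls are compactly contained in $B_R(x_0)\subset\Omega$, so the lemma's $\Subset$ hypothesis is met. Feeding that into balls $B_{R-r}(y)$ with the near/far split gives \eqref{eq-sec2-232}, even with the sharper factor $R^n/(R-r)^{n+\alpha}\le R^{n+1-\alpha}/(R-r)^{n+1}$.

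The paper does not write this argument out; it refers to Dong--Zhu (Corollary~2.3 there), and your route is presumably the standard argument behind that citation.
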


				By an argument analogous to that in~\cite[Theorem 2.1]{Dongz-jems-24}, we have the following lemma.
				\begin{lemma}\label{lem-dong21}
					Let $v \in W^{1,G}_{\mathrm{loc}}(\Omega)$ be a solution of \eqref{eq-s2-1} and let $\gamma_0\in(0,1)$. 
					Then there exist constants $\alpha\in(0,1)$ and $C>1$ such that
					$\alpha$ depends only on $n,\lambda,\Lambda,i_a,s_a$, while $C$ depends only on 
					$n,\lambda,\Lambda,i_a,s_a$ and $\gamma_0$, and for every ball 
					$B_R(x_0)\subset\Omega$ and every $\rho\in(0,R)$,
					\begin{equation}
						\inf_{q \in \mathbb{R}^n}
						\left(
						\fint_{B_\rho(x_0)} |\nabla v - q|^{\gamma_0}\,dx
						\right)^{1/\gamma_0}
						\le
						C \left( \frac{\rho}{R} \right)^{\alpha}
						\inf_{q \in \mathbb{R}^n}
						\left(
						\fint_{B_R(x_0)} |\nabla v - q|^{\gamma_0}\,dx
						\right)^{1/\gamma_0}.
					\end{equation}
				\end{lemma}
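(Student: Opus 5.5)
The plan is to reduce the $\gamma_0$-excess decay to the $L^1$-excess decay of Lemma~\ref{lem-24} and its corollary \eqref{eq-sec2-231}, following \cite[Theorem 2.1]{Dongz-jems-24}. The bridge is a reverse-type inequality:
\[
\fint_{B_{R/2}(x_0)}|\nabla v-(\nabla v)_{B_{R/2}}|\,dx\le C(\gamma_0)\inf_{q}\Big(\fint_{B_R(x_0)}|\nabla v-q|^{\gamma_0}\,dx\Big)^{1/\gamma_0}.
\]
Granting it, the statement follows in three steps. First, it suffices to treat $\rho\le R/4$, since for $\rho\in(R/4,R)$ the claim is trivial with $C=4^{n/\gamma_0}$. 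Second, for $\rho\le R/4$ fix $q_\rho=\nabla v(x_0)$. By \eqref{eq-sec2-231} applied on $B_{R/2}$, for every $y\in B_\rho$ we have
\[
|\nabla v(y)-\nabla v(x_0)|\le \rho^\alpha[\nabla v]_{C^\alpha(B_{R/4})}\le C(\rho/R)^\alpha\fint_{B_{R/2}}|\nabla v-(\nabla v)_{B_{R/2}}|\,dx.
\]
Third, taking the $\gamma_0$-average over $B_\rho$ and inserting the reverse inequality gives the claim with $\alpha$ the exponent from Lemma~\ref{lem-24}, independent of $\gamma_0$.

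To prove the reverse inequality, fix $q$ nearly optimal on $B_R$. The obstacle is that $v-q\cdot x$ does not solve an equation of the same type, because $\cA_0$ is not translation invariant in $\xi$. One must therefore use the structure of $\cA_0$ rather than a linear change of variables. The natural route is through the Lipschitz bound and the $C^{1,\alpha}$ estimate relative to the constant vector $q$. We set $w=v-q\cdot x$, which solves $-\DIV\tilde\cA(\nabla w)=0$ with $\tilde\cA(\xi)=\cA_0(\xi+q)-\cA_0(q)$. This field satisfies Orlicz-type bounds with growth $\tilde g$ comparable to $g'(|q|+|\xi|)|\xi|$, which is of the kind treated in \cite{A-26-ar} and \cite{BP-calc-15}.

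From there the argument has two steps. First, a Caccioppoli inequality for $w-k$ together with Lemma~\ref{lemma-61} and a Sobolev--Poincar\'e argument gives an $L^\infty$ bound for $\nabla w$ on $B_{3R/4}$ in terms of $\fint_{B_R}|\nabla w|\,dx$. Second, this bound is interpolated to lower the integrability exponent to $\gamma_0$. Writing $M(r)=\sup_{B_r}|\nabla w|$, for $R/2\le r<s\le R$ we have
\[
M(r)\le \frac{C R^{\beta}}{(s-r)^{\beta}}\fint_{B_s}|\nabla w|\le \frac{CR^\beta}{(s-r)^\beta}M(s)^{1-\gamma_0}\Big(\fint_{B_R}|\nabla w|^{\gamma_0}\Big).
\]
Applying Young's inequality and an iteration lemma of Giaquinta type, which is the scalar version of Lemma~\ref{lemma-61}, absorbs $M(s)$. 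This yields $\sup_{B_{R/2}}|\nabla v-q|\le C(\fint_{B_R}|\nabla v-q|^{\gamma_0})^{1/\gamma_0}$, which implies the reverse inequality.

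I expect the main difficulty to be obtaining the $L^\infty$ bound in the first step uniformly in $q$, with constants independent of $|q|$ despite the inhomogeneous growth $\tilde g$. If this fails, the fallback is to split into the cases $|q|\lesssim \fint|\nabla v-q|$ and $|q|\gg\fint|\nabla v-q|$. In the first case the sup bound for $\nabla v$ itself is enough, after rewriting $|\nabla v|$ in terms of $|\nabla v-q|$. In the second case the equation is uniformly elliptic, with ellipticity ratio controlled by $i_a$ and $s_a$, near $q$, so linear-type estimates apply.
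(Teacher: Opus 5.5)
Your reduction is sound. The treatment of the range $\rho\in(R/4,R)$, the use of \eqref{eq-sec2-231} on $B_{R/2}$, and the interpolation step are the right ingredients. That step is $\fint_{B_s}|\nabla w|\le M(s)^{1-\gamma_0}\fint_{B_s}|\nabla w|^{\gamma_0}$, then Young's inequality, then a Giaquinta-type iteration, which is the argument behind \cite[Theorem~2.1]{Dongz-jems-24} that the paper defers to. The gap is the input to that iteration, namely
\[
M(r)=\sup_{B_r}|\nabla v-q|\le C\Big(\frac{R}{s-r}\Big)^{\beta}\fint_{B_s}|\nabla v-q|\,dx
\]
with $C$ independent of $q$. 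You leave this open, and the routes you propose for it do not work.

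\begin{itemize}
\item A Caccioppoli inequality for $w-k$, combined with Lemma~\ref{lemma-61} and Sobolev--Poincar\'e, gives at best a reverse H\"older inequality, i.e.\ higher integrability of $\nabla w$. This is exactly what the paper gets that way in Lemma~\ref{lemma-RH}. A sup bound on a gradient needs the differentiated equation.
\item The shifted field $\tilde\cA(\xi)=\cA_0(\xi+q)-\cA_0(q)$ does not satisfy \eqref{eq:struct3} with $\tilde g(t)\simeq g'(|q|+t)\,t$. Near $\xi=-q$ one has $|D_\xi\tilde\cA(\xi)|\ge\lambda g'(|\xi+q|)\ge\lambda i_a\, g(|\xi+q|)/|\xi+q|\to\infty$, because $g(t)/t\to\infty$ as $t\to0$ when $s_a<1$. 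Meanwhile $\tilde g(|\xi|)/|\xi|$ stays bounded there. So neither \cite{A-26-ar} nor \cite{BP-calc-15} yields estimates for $\tilde\cA$ with $q$-independent constants.
\item Your fallback is circular when $|q|\gg\fint|\nabla v-q|$. Smallness of $\fint|\nabla v-q|$ puts $\nabla v$ near $q$ only in measure. Uniform ellipticity near $q$ needs $\nabla v$ close to $q$ pointwise, which is what you are trying to prove.
\end{itemize}

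The missing observation is that no shifted equation is needed. The non-invariance of $\cA_0$ is already absorbed in Lemma~\ref{lem-24}, because the right-hand side of \eqref{eq-sec2-232} is an $L^1$ mean oscillation, and $\fint_{B_s}|\nabla v-(\nabla v)_{B_s}|\le 2\fint_{B_s}|\nabla v-q|$ for every $q\in\R^n$. Average $|\nabla v(x)-q|\le|\nabla v(x)-\nabla v(y)|+|\nabla v(y)-q|$ over $y\in B_r$. Then apply \eqref{eq-sec2-232} on $B_s$, which is admissible since $r\ge R/2\ge s/2$. For $R/2\le r<s<R$ this gives
\[
\sup_{B_r}|\nabla v-q|\le (2r)^{\alpha}[\nabla v]_{C^{\alpha}(B_r)}+\fint_{B_r}|\nabla v-q|\,dx\le C\Big(\frac{R}{s-r}\Big)^{n+1}\fint_{B_s}|\nabla v-q|\,dx,
\]
which is your inequality with $\beta=n+1$. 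Your interpolation then closes: run the iteration on $[R/2,R']$ with $R'<R$, where $M$ is finite since $v\in C^{1,\alpha}_{\mathrm{loc}}$, and let $R'\to R$. This yields $\sup_{B_{R/2}}|\nabla v-q|\le C(\fint_{B_R}|\nabla v-q|^{\gamma_0})^{1/\gamma_0}$ for every $q$, hence your reverse inequality.

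A minor point: for $\rho\in(R/4,R)$ the constant must be $4^{n/\gamma_0+\alpha}$ rather than $4^{n/\gamma_0}$.
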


				Next we give a few miscellaneous and useful results.
				\begin{lemma}[Theorem 7,~\cite{DE-forum-08}]\label{lem:orlicz-poincare}
					Let $G$ be an $N$-function satisfying the $\Delta_2$-condition and $\nabla_2$-condition.  Then there exist constants $\eta \in (0,1)$,  such that for every $v \in W^{1,G}(B_R)$ it holds
					\begin{equation}\label{eq:orlicz-poincare}
						\fint_{B_R}G\left(\frac{|v - (v)_{B_R}|}{R}\right)\, dx
						\le C \left( \fint_{B_R} \big(G(|\nabla v|) \big)^\eta \, dx \right)^{1/\eta}.
					\end{equation}
				\end{lemma}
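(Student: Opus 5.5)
We scale to $R=1$: replace $v$ by $v(x_0+R\,\cdot)/R$, which leaves both sides of \eqref{eq:orlicz-poincare} unchanged. We also subtract a median $m$ of $v$ on $B_1$ instead of the mean. At the end we return to $(v)_{B_1}$ using the $\Delta_2$-condition and Jensen's inequality for the convex $G$:
\[
G(|(v)_{B_1}-m|)\le \fint_{B_1}G(|v-m|)\,dx,
\]
so $\fint_{B_1} G(|v-(v)_{B_1}|)\,dx\le C\fint_{B_1}G(|v-m|)\,dx$.

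Since the right-hand side of \eqref{eq:orlicz-poincare} involves only a power $\eta<1$ of $G(|\nabla v|)$, the plan is a Maz'ya-type truncation argument on dyadic level sets. It uses only the $L^1$ Sobolev--Poincaré inequality with exponent $n/(n-1)$ and the scaling of Lemma~\ref{lem:Gg}. Write $w=|v-m|$ and $E_k=\{w>2^k\}$ for $k\in\mathbb Z$. Set $w_k=\min\{(w-2^k)_+,2^k\}$; it vanishes on at least half of $B_1$ because $m$ is a median. The Sobolev inequality for functions vanishing on half the ball gives
\[
2^k|E_{k+1}|^{\frac{n-1}{n}}\le \|w_k\|_{L^{n/(n-1)}(B_1)}\le C\int_{E_k\setminus E_{k+1}}|\nabla v|\,dx .
\]
Then
\[
\int_{B_1}G(w)\,dx\le C\sum_k G(2^{k+1})|E_k\setminus E_{k+1}|\le C\sum_k G(2^{k})|E_{k}|,
\]
and each term will be controlled by the displayed level-set inequality.

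The key step is estimating $\int_{E_k\setminus E_{k+1}}|\nabla v|$ by a quantity involving $G(|\nabla v|)^\eta$. Put $D_k=E_k\setminus E_{k+1}$. By Hölder's inequality with exponent $r>1$ one bounds $\int_{D_k}|\nabla v|$ by a power of $|D_k|$ times $\big(\int_{D_k}|\nabla v|^{r}\big)^{1/r}$. The $\Delta_2\cap\nabla_2$ structure (Lemma~\ref{lem:Gg}) gives
\[
t^{1+i_a}\lesssim G(t)\lesssim t^{1+s_a}\quad (t\ge1),\qquad t^{1+s_a}\lesssim G(t)\lesssim t^{1+i_a}\quad (t\le1).
\]
This lets us pass from $|\nabla v|$ to $G(|\nabla v|)^{\eta}$ once $\eta$ is chosen with $\eta(1+i_a)\ge 1$. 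It must also be strictly less than $1$ and close enough to $1$ that the gain $|E_{k+1}|^{(n-1)/n}$ from Sobolev compensates the loss from Hölder. A choice like $\eta\in\big(\max\{\tfrac{1}{1+i_a},\tfrac{n}{n+1}\},1\big)$ seems right.

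Multiplying the level-set inequality by $G(2^k)/2^k$ and using $G(2^{k+1})\le CG(2^k)$, we then sum over $k$. The dyadic terms should combine via Young's inequality \eqref{eq-Young} and the elementary inequality $\sum a_k^{\theta}\le(\sum a_k)^{\theta}$, valid for $\theta\ge1$, applied with $\theta=1/\eta$. The target is
\[
\sum_k G(2^k)|E_k| \le C\Big(\int_{B_1}G(|\nabla v|)^\eta\Big)^{1/\eta}.
\]

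The main obstacle is this exponent bookkeeping. We must balance the homogeneity of $G$ between $1+i_a$ and $1+s_a$, which differs on small and large scales, against the Sobolev gain $n/(n-1)$, while ending with the single power $\eta<1$ of $G(|\nabla v|)$. If the direct dyadic summation fails, I would first try a fallback. It uses the potential estimate $|v(x)-(v)_{B_1}|\le C\,I_1(|\nabla v|\chi_{B_1})(x)$ together with the weak-type $(1,\tfrac{n}{n-1})$ bound for $I_1$, applied to the function $G(|\nabla v|)^{\eta}$. This yields a weak-type Orlicz inequality, which is then upgraded to the strong modular estimate by the same truncation trick.
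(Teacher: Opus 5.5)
There is a genuine gap, and it sits exactly at the step you flag as the main obstacle. Your reductions are sound: the scaling to $R=1$ leaves both sides invariant, and returning from a median to the mean via Jensen and $\Delta_2$ is correct. There is one fixable slip. $w_k=\min\{(|v-m|-2^k)_+,2^k\}$ need not vanish on half of $B_1$, because a median only controls $\{v>m\}$ and $\{v<m\}$ separately, so you must run the truncation for $(v-m)_+$ and $(v-m)_-$ separately.

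\textbf{Why the power-type route fails.} You control $\int_{D_k}|\nabla v|$ by H\"older with a fixed exponent $r$ and then compare $|\nabla v|^r$ with $G(|\nabla v|)^\eta$ through the power bounds of Lemma~\ref{lem:Gg}. This replaces the Orlicz structure by a single power, while the inequality is not homogeneous. It is not invariant under $v\mapsto\lambda v$ and has no additive constant, so you can neither normalize nor discard the set $\{|\nabla v|\le1\}$.
\begin{itemize}
\item For a general $G$ in the class, $t^r\lesssim G(t)^\eta$ on all of $(0,\infty)$ forces $(1+s_a)\eta\le r\le(1+i_a)\eta$, which is impossible when $i_a<s_a$.
\item A global comparison is available in some cases, e.g.\ $r=(1+i_a)\eta$ under the extra assumption $g(t)\ge ct^{i_a}$. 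Even then the level-set inequalities retain only $L^r$-information on $\nabla v$. By sharpness of $W^{1,r}\hookrightarrow L^{nr/(n-r)}$, this cannot control $\int G(|v-m|)$ when $G$ grows faster than $t^{nr/(n-r)}$ at infinity. That happens, for instance, for $g(t)=t^{i_a}+t^{s_a}$ with $n$ large and $i_a\ll s_a$.
\item Your fallback has the same hole. ``Apply the potential estimate to $G(|\nabla v|)^\eta$'' presupposes a way to pass from $I_1(|\nabla v|)$ to $I_1(G^\eta(|\nabla v|))$, where $I_1f(x)=\int f(y)|x-y|^{1-n}\,dy$, and you do not supply one.
\end{itemize}

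\textbf{The missing idea.} $\psi:=G^\eta$ is itself essentially convex once $\eta(1+i_a)\ge1$. Since $tg(t)\ge(1+i_a)G(t)$, one has $t\psi'(t)/\psi(t)\ge\eta(1+i_a)\ge1$. Hence $\psi(t)/t$ is nondecreasing and $\psi$ is equivalent to the convex function $\int_0^t\psi(s)s^{-1}\,ds$. This is where $\nabla_2$ enters, and it is the real role of your condition $\eta(1+i_a)\ge1$.

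The paper gives no proof; it cites Diening--Ettwein, whose proof (as I recall) uses this via Jensen. Start from $|v(x)-(v)_B|\le c\int_B|\nabla v(y)|\,|x-y|^{1-n}\,dy$, and note that $R^{-1}|x-y|^{1-n}\,dy$ has mass comparable to $1$ on $B$. Jensen then gives
\[
\psi\big(|v(x)-(v)_B|/R\big)\lesssim R^{-1}I_1\big(\psi(|\nabla v|)\chi_B\big)(x).
\]
Finally, $I_1$ maps $L^1(B)$ into $L^{1/\eta}(B)$ with exactly the right scaling because $1/\eta<n/(n-1)$.

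\textbf{Repairing your level-set scheme.} The same ingredient works there. Monotonicity of $\psi(t)/t$ together with $\Delta_2$ gives $t\,\psi(s)/s\le\delta\psi(s)+C_\delta\psi(t)$. Multiplying your level-set inequality by $\psi(2^k)/2^k$ then yields
\[
\psi(2^k)|E_{k+1}|^{\frac{n-1}{n}}\le C\delta\,\psi(2^k)|D_k|+C_\delta\int_{D_k}\psi(|\nabla v|)\,dx .
\]
Raise this to the power $n/(n-1)$ and sum, using $\sum a_k^\theta\le(\sum a_k)^\theta$ with $\theta=n/(n-1)$. Absorb the $\delta$-term, after truncating $w$ at a high level to guarantee finiteness. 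This gives
\[
\|G^\eta(w)\|_{L^{n/(n-1)}(B_1)}\lesssim\int_{B_1}G^\eta(|\nabla v|)\,dx,
\]
and H\"older's inequality, valid since $\eta n/(n-1)\ge1$, concludes. Without this Young/Jensen step for $G^\eta$, the proposal does not prove the lemma.
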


				\begin{lemma}[Proposition~6,~\cite{DE-forum-08}]\label{lem-book}
					Let $f\in L^1(\Omega)$. Suppose that for some $\eta\in (0,1)$, $c_1> 0$, and all ball $B_{\frac\rho2}$ with $B_{\rho}\subset \Omega$,
					\[\fint_{B_{\frac\rho2}}|f|\,dx \leq c_1 \left(\fint_{B_{\rho}}|f|^{\eta}\,dx\right)^{\frac{1}{\eta}}.\]
					Then there exists  $d_1>1$ and $c_2>1$ such that $f\in L^{d_1}(B_\rho)$, and for all $d_0\in [1,d_1]$,
					\[ \left(\fint_{B_{\frac\rho2}}|f|^{d_0}\,dx\right)^{\frac{1}{d_0}}\leq c_2\fint_{B_{\rho}}|f|\,dx. \]
				\end{lemma}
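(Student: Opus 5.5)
The plan is to reduce the statement to the classical Gehring lemma in the Giaquinta--Modica form, applied to a suitable power of $|f|$. Set $h:=|f|^{\eta}$ and $q:=1/\eta>1$. Then $h\in L^{q}_{\rm loc}(\Omega)$, since $h^q=|f|\in L^1(\Omega)$. The hypothesis reads
\[
\fint_{B_{\rho/2}} h^{q}\,dx\le c_1\Big(\fint_{B_\rho} h\,dx\Big)^{q}
\qquad\text{for all } B_\rho\subset\Omega .
\]
This is exactly a weak reverse H\"older inequality with exponents $(q,1)$ on concentric balls.

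The core step is Gehring's self-improvement. I would use the standard level-set argument (as in Giaquinta's or Giusti's book). Fix a ball $B_{\rho}\subset\Omega$ and work in a cube $Q_0$ between $B_{\rho/2}$ and $B_\rho$. Consider the function $h^q$ weighted by the distance to $\partial Q_0$, so that the estimate becomes scale-free. For each level $t$ above the average, perform a Calder\'on--Zygmund stopping-time decomposition of $\{h^q>t\}$ into maximal dyadic subcubes $Q_j$. On each $Q_j$ the average of $h^q$ is comparable to $t$. Applying the hypothesis on a ball comparable to $Q_j$, and splitting $h$ into the parts above and below $\delta t^{1/q}$, one obtains
\[
\int_{\{h^q>t\}} h^{q}\le C\, t^{1-\frac1q}\int_{\{h>\delta t^{1/q}\}} h\,dx .
\]
Multiplying by $t^{\varepsilon-1}$, integrating in $t$ and using Fubini yields, for $\varepsilon$ small depending on $c_1,n,\eta$, the bound
\[
\Big(\fint_{B_{\rho/2}} h^{q+\varepsilon}\Big)^{\frac1{q+\varepsilon}}\le C\Big(\fint_{B_\rho} h^{q}\Big)^{\frac1q},
\]
after absorbing the term containing $\int h^{q+\varepsilon}$ into the left-hand side. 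To make the absorption legitimate, one first works with the truncations $\min\{h,M\}$ and then lets $M\to\infty$. A covering argument passes from cubes back to the concentric balls $B_{\rho/2}\subset B_\rho$; this uses that the hypothesis holds for all balls in $\Omega$.

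Translating back with $d_1:=(q+\varepsilon)\eta=1+\varepsilon\eta>1$ gives $f\in L^{d_1}$ on $B_{\rho/2}$ and, in terms of $f$,
\[
\Big(\fint_{B_{\rho/2}}|f|^{d_1}\Big)^{\frac1{d_1}}\le c_2\fint_{B_\rho}|f|\,dx .
\]
Integrability on all of $B_\rho$ follows by covering $B_\rho$, up to the part near the boundary, with smaller balls whose doubles lie in $\Omega$. It holds wherever $B_\rho$ sits compactly in $\Omega$, which is the intended local meaning of the statement. For $d_0\in[1,d_1]$, H\"older's inequality gives $(\fint_{B_{\rho/2}}|f|^{d_0})^{1/d_0}\le(\fint_{B_{\rho/2}}|f|^{d_1})^{1/d_1}$, which concludes the proof.

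The main obstacle is the Calder\'on--Zygmund level-set estimate together with the absorption step, specifically controlling the constants uniformly in the truncation level $M$. Since this is the classical Gehring--Giaquinta--Modica lemma, I would cite it (e.g.\ Giusti, Theorem 6.6) rather than reprove it. The only genuinely new point is then the substitution $h=|f|^{\eta}$, which turns a reverse H\"older inequality with sub-unit exponent into one of standard type.
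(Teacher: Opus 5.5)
Your proof is correct and matches the paper. The paper does not prove this lemma; it only quotes Diening–Ettwein, and that result is the Gehring–Giaquinta–Modica lemma rewritten for $h=|f|^{\eta}$, $q=1/\eta$, which is exactly your reduction. You are also right to read ``$f\in L^{d_1}(B_\rho)$'' locally, i.e.\ $f\in L^{d_1}_{\mathrm{loc}}(\Omega)$: taken literally for balls $B_\rho$ reaching $\partial\Omega$, that claim can fail, e.g.\ for $f=(1-|x|)^{-a}$ on $\Omega=B_1$ with $a$ close to $1$.
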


				\begin{lemma}[Lemma 3.1,~\cite{DM-ameriacn-11}]\label{lem-31dm}
					Let $f:\Omega\to \R^n$ be a integrable map such that
					\[\left(\fint_{B_{\frac{\rho}{2}}}|f|^{d_0}\,dx\right)^{\frac{1}{d_0}}\leq c \fint_{B_{\rho}}|f|\,dx\]
					holds whenever $B_{\rho}\subseteq\Omega$, and $d_0>1$, $c\geq0$. Then for every $t\in (0,1]$ and $d\in(0, d_0]$ there exists a constant $c_0(n,c,t)$ such that for every $B_{\rho}\Subset \Omega$, 
					\[\left(\fint_{B_{\frac{\rho}{2}}}|f|^{d}\,dx\right)^{\frac{1}{d}}\leq c_0 \left(\fint_{B_{\rho}}|f|^t\,dx\right)^{\frac{1}{t}}.
					\]
				\end{lemma}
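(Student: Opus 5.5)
This is the standard self-improving (Gehring-type) lemma: a reverse inequality from exponent $1$ down to any small exponent $t$. I would follow the classical argument in \cite{DM-ameriacn-11}, which uses only H\"older's inequality and a covering/iteration lemma.

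First, the case $d\le 1$ reduces to $d=d_0$. Indeed, for $d\le d_0$ H\"older gives $(\fint|f|^d)^{1/d}\le(\fint|f|^{d_0})^{1/d_0}$. So it suffices to show
\[
\Big(\fint_{B_{\rho/2}}|f|^{d_0}\,dx\Big)^{1/d_0}\le c_0\Big(\fint_{B_\rho}|f|^t\,dx\Big)^{1/t}.
\]
By the hypothesis, this in turn follows from proving $\fint_{B_{\rho/2}}|f|\,dx\le c\,(\fint_{B_\rho}|f|^t)^{1/t}$, after adjusting radii. That is, apply the hypothesis on balls $B_{r}(y)$ centered in $B_{\rho/2}$ and cover, which gives the $d_0$ bound on $B_{\rho/2}$ from the $L^1$ bound on a slightly larger ball. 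The intermediate radii are handled as below.

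The core step is an interpolation and absorption argument. Fix $\rho/2\le s<\tau\le\rho$. Cover $B_s$ by balls $B_{(\tau-s)/2}(y)$ with $y\in B_s$, so that the doubled balls $B_{\tau-s}(y)$ lie in $B_\tau$ and have bounded overlap. The hypothesis on each small ball, summed over the cover, yields
\[
\Big(\int_{B_s}|f|^{d_0}\Big)^{1/d_0}\le \frac{C\,\rho^{n/d_0}}{(\tau-s)^{n}}\cdot\rho^{-n/d_0}\!\!\int_{B_\tau}|f|\,dx,
\]
with the explicit power of $\rho/(\tau-s)$ tracked. Next interpolate $L^1$ between $L^t$ and $L^{d_0}$: with $\frac1{1}=\frac{\theta}{t}+\frac{1-\theta}{d_0}$,
\[
\fint_{B_\tau}|f|\le \Big(\fint_{B_\tau}|f|^t\Big)^{\theta/t}\Big(\fint_{B_\tau}|f|^{d_0}\Big)^{(1-\theta)/d_0}.
\]
Young's inequality with $\varepsilon$ then gives, for $\Phi(r):=(\fint_{B_r}|f|^{d_0})^{1/d_0}$ (normalized by $|B_\rho|$),
\[
\Phi(s)\le \tfrac12\Phi(\tau)+C\Big(\frac{\rho}{\tau-s}\Big)^{\beta}\Big(\fint_{B_\rho}|f|^t\Big)^{1/t},
\]
for some $\beta=\beta(n,d_0,t)$. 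The standard iteration lemma (the classical version of Lemma~\ref{lemma-61}, \cite[Lemma 6.1]{book-GE-direct}) then gives $\Phi(\rho/2)\le C(\fint_{B_\rho}|f|^t)^{1/t}$.

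The main obstacle is making the absorption legitimate: the iteration lemma requires $\Phi(\rho)<\infty$. The hypothesis applied on the covering balls inside $B_\rho$ only yields $f\in L^{d_0}$ on compact subsets. I would therefore run the iteration on $[\rho/2,\rho']$ with $\rho'<\rho$, where $\Phi(\rho')$ is finite, and let $\rho'\uparrow\rho$. I would also handle carefully the dependence of $c_0$ on $n,c,t$, and on $d_0$, which appears through $\theta$ and $\beta$.
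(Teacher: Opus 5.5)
The paper gives no proof of its own here; it simply quotes Lemma 3.1 of \cite{DM-ameriacn-11}. Your argument is correct and is essentially the standard proof from that source: covering to get $\Phi(s)\le C(\rho/(\tau-s))^{n}\fint_{B_\tau}|f|\,dx$, interpolating $L^1$ between $L^t$ and $L^{d_0}$, applying Young's inequality, and running the iteration lemma on $[\rho/2,\rho']$ with $\rho'\uparrow\rho$, then using H\"older for $d\le d_0$.

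The only slip is cosmetic. In your covering display the normalization is misplaced; it should read $\bigl(\int_{B_s}|f|^{d_0}\,dx\bigr)^{1/d_0}\le C\rho^{n/d_0}(\tau-s)^{-n}\int_{B_\tau}|f|\,dx$. You are also right that the constant genuinely depends on $d_0$, which the quoted statement omits.
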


				\section{Pointwise gradient estimates}\label{sec3}
				
				The section is devoted to the proof of pointwise gradient estimates for solutions of \eqref{eq-main}. Let $u$ be a solution to \eqref{eq-main} and $B_{2r}(x_0)\Subset\Omega$. We consider  the unique solution $w$ to the equation
				\begin{equation}\label{eq-sec3-1}
					\begin{cases}
						-\DIV \left( \cA(x,\nabla w )\right)=0 &\quad \mbox{in} \quad B_{2r}(x_0),\\
						w=u &\quad \mbox{on} \quad \partial B_{2r}(x_0).
					\end{cases}
				\end{equation}
				
				We shall derive the following estimate.
				
				\begin{lemma}\label{lem-32}
					Let $w$ be a solution to \eqref{eq-sec3-1}  and assume that $i_a\in (0,1)$. Then for any $\gamma_0\in (0,1-i_a]$ when $i_a\in (\frac{n-1}{2n-1},1)$, or $\gamma_0\in (0,\frac{ni_a}{n-1})$ when $i_a \in (0,\frac{n-1}{2n-1}]$, we have
					\begin{equation}
						\begin{split}
							&\left(  \fint_{B_{2r}(x_0)} |\nabla u-\nabla w|^{\gamma_0}\,dx\right)^{\frac{1}{\gamma_0}}\\
							&\quad \leq Cg^{-1}\left(\frac{|\mu|(B_{2r}(x_0))}{r^{n-1}}\right)+C\left( g^{-1}\left(\frac{|\mu|(B_{2r}(x_0))}{r^{n-1}} \right) \right)^{i_a}
							\fint_{B_{2r}(x_0)}(|\nabla u|+1)^{1-i_{a}}\,dx,
						\end{split}
					\end{equation}
					where $C$ is a constant depending only on $n$, $i_a$ and $\gamma_0$.
				\end{lemma}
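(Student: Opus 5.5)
We follow the comparison argument of \cite{NP20,Dongz-jems-24}, adapted to Orlicz growth. By translation take $x_0=0$, write $B=B_{2r}$, $M:=|\mu|(B)/r^{n-1}$, and set $\kappa:=g^{-1}(M)$. The error $u-w$ vanishes on $\partial B$ and satisfies
\[
-\DIV\big(\cA(x,\nabla u)-\cA(x,\nabla w)\big)=\mu .
\]
For $k>0$ test with $\pm T_k(u-w)$; this is legitimate after the usual approximation by smooth $\mu_k$ and truncation. Using \eqref{2.4}--\eqref{2.5} we get the level-set energy bound
\[
\int_{\{|u-w|<k\}}\frac{g(|\nabla u|+|\nabla w|)}{|\nabla u|+|\nabla w|}\,|\nabla u-\nabla w|^2\,dx\le C\,k\,|\mu|(B).
\]
Since $g(t)/t$ is decreasing when $s_a<1$, we use Cauchy--Schwarz on $\{|u-w|<k\}$ in the form
\[
|\nabla(u-w)|\le \Big(\tfrac{g(|\nabla u|+|\nabla w|)}{|\nabla u|+|\nabla w|}|\nabla(u-w)|^2\Big)^{1/2}\Big(\tfrac{|\nabla u|+|\nabla w|}{g(|\nabla u|+|\nabla w|)}\Big)^{1/2}.
\]
By \eqref{eq-0}, $t/g(t)\le C t^{1-i_a}$, which is exactly where the weight $(|\nabla u|+1)^{1-i_a}$ enters. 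We also bound $|\nabla w|\lesssim |\nabla u|+|\nabla(u-w)|$ and absorb the $\nabla(u-w)$ contribution with a small parameter.

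The core step is a Boccardo--Gallou\"et/Talenti-type level-set iteration, done on the normalized ball $B_2$ after rescaling $x\mapsto x/r$ and $u\mapsto u/r$. The rescaling turns the data into $\mu$ with total mass $M$. Because of non-homogeneity, $g$ is not rescaled: the structure constants \eqref{eq:g-cond} and \eqref{eq-0} are invariant under this change of variables. We combine three ingredients:
\begin{itemize}
\item[(a)] the bound $\int_{\{|v|<k\}}|\nabla v|^2 W^{-1}\le CkM$, with $v=u-w$ and $W=(|\nabla u|+|\nabla v|+1)^{1-i_a}$;
\item[(b)] Sobolev on $T_k(v)\in W^{1,1}_0$, giving $|\{|v|>k\}|\le C k^{-n/(n-1)}\big(\int|\nabla T_kv|\big)^{n/(n-1)}$;
\item[(c)] the layer-cake splitting $|\{|\nabla v|>t\}|\le |\{|v|>k\}|+t^{-2}\int_{\{|v|<k\}}|\nabla v|^2$.
\end{itemize}
Optimizing in $k$ gives a weak-type estimate for $|\nabla v|$. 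Integrating this weak-type bound yields $\big(\fint|\nabla v|^{\gamma_0}\big)^{1/\gamma_0}$ for $\gamma_0$ below the weak exponent. The exponent bookkeeping produces the threshold $\gamma_0<\frac{n i_a}{n-1}$, the analogue of $\frac{n(p-1)}{n-1}$. In the range $i_a>\frac{n-1}{2n-1}$ one has $\frac{ni_a}{n-1}>1-i_a$, so the endpoint $\gamma_0=1-i_a$ is admissible; this is why the statement distinguishes the two regimes.

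The resulting bound has the form
\[
\Big(\fint|\nabla v|^{\gamma_0}\Big)^{1/\gamma_0}\le C\,\Phi(M)+C\,\Psi(M)\fint(|\nabla u|+|\nabla v|+1)^{1-i_a}.
\]
The term $\fint|\nabla v|^{1-i_a}$ on the right is controlled by $\big(\fint|\nabla v|^{\gamma_0}\big)^{(1-i_a)/\gamma_0}$ via H\"older, since $\gamma_0\ge 1-i_a$; in the other regime we use the weak-type bound directly. By Young's inequality with exponents $\frac1{1-i_a}$ and $\frac1{i_a}$ we then absorb it into the left-hand side, at the cost of a term $C\,\Psi^{1/i_a}$.

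Finally we must identify $\Phi(M)\approx g^{-1}(M)$ and $\Psi(M)\approx (g^{-1}(M))^{i_a}$. For this we treat $M\ge1$ and $M<1$ separately and use the two-sided scaling of $g^{-1}$ from Lemma~\ref{lem:Gg}. In the case $M\ge1$ we pass through the comparison $t\mapsto g(t)\ge ct^{i_a}$, so that $M^{1/i_a}\gtrsim g^{-1}(M)$ appears in the correct direction.

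The main obstacle is this last bookkeeping in the non-homogeneous setting. Without scaling invariance, the natural quantities from the iteration are powers of $M$, and converting them to $g^{-1}(M)$ and $(g^{-1}(M))^{i_a}$ requires careful use of \eqref{eq-0} together with Lemma~\ref{lem:Gg}. I would first attempt to carry $g$ through the level-set argument directly, replacing $|\nabla v|^2/W$ by $G$-type quantities via \eqref{eq-2.3}. I would fall back on the power comparison only where that fails.
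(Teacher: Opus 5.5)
\textbf{There is a genuine gap, at the step you flag as the main obstacle: the proposed conversion runs in the wrong direction.}

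You rescale only in space and bound the weight by $t/g(t)\le Ct^{1-i_a}$ from \eqref{eq-0}. That is, you compare $g$ with a power at the fixed level $t\approx 1$, as in $W=(|\nabla u|+|\nabla v|+1)^{1-i_a}$. Carried out, the level-set argument then gives $\Phi(M)=M^{1/i_a}$ and $\Psi(M)=M$, which is just the $p$-Laplacian estimate with $p-1=i_a$.

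To reach the lemma you need $M^{1/i_a}\lesssim g^{-1}(M)$ and $M\lesssim (g^{-1}(M))^{i_a}$.
\begin{itemize}
\item For $M\le g(1)$ this holds, since $g(t)\le g(1)t^{i_a}$ for $t\le 1$.
\item For large $M$ it would require an upper bound $g(t)\lesssim t^{i_a}$ for large $t$. What \eqref{eq-0} gives is the reverse, $g^{-1}(M)\lesssim M^{1/i_a}$. That is exactly the inequality you wrote, and it does not let you pass from your bound to the stated one.
\item For example, for $g(t)=t^{p-1}+t^{q-1}$ with $p<q$, one has $g^{-1}(M)\approx M^{1/(q-1)}\ll M^{1/(p-1)}$ as $M\to\infty$. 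So your estimate is strictly weaker than the lemma.
\end{itemize}
Your fallback of carrying $g$ through via \eqref{eq-2.3} does not repair this. It introduces $G(|\nabla w|)$, which the data do not control, and it still fixes the wrong reference level.

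\textbf{The missing idea is to normalize the amplitude at the level set by the data}, as the paper does. Take $A=g^{-1}\big(|\mu|(B_{2r})/(2r)^{n-1}\big)$ and put $\bar u(x)=u(x_0+2rx)/(2Ar)$, $\bar\cA(x,z)=\cA(x_0+2rx,Az)/g(A)$.
\begin{itemize}
\item Then $|\bar\mu|(B_1)=1$.
\item The new growth function $\bar g(t)=g(At)/g(A)$ has the same indices and $\bar g(1)=1$. Hence $\bar g(t)\ge t^{i_a}$ for $t\ge 1$, and $\bar g(t)/t$ is decreasing, both from \eqref{eq:g-cond} alone.
\item Equivalently, without rescaling, use $t/g(t)\le C\frac{A}{g(A)}(1+t/A)^{1-i_a}$ with this $A$ in place of $A=1$.
\item The normalized bound $C+C\fint_{B_1}(|\nabla\bar u|+1)^{1-i_a}$ scales back to $CA+CA^{i_a}\fint(|\nabla u|+A)^{1-i_a}\le CA+CA^{i_a}\fint(|\nabla u|+1)^{1-i_a}$, which is the claim.
\end{itemize}

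\textbf{Smaller points that also need repair.}
\begin{itemize}
\item Step (c) needs the unweighted integral $\int_{\{|v|<k\}}|\nabla v|^2$. Item (a) does not control it, because $W^{-1}\le 1$. The paper instead applies Chebyshev to $H=|V(\nabla u)-V(\nabla w)|^2$ and gets $\|H\|_{L^{q/(1+q),\infty}}\le C\|u-w\|_{L^{q,\infty}}$ with $q=\frac{n}{2(n-1)}$. Only then does it pass to $|\nabla v|^{\gamma_0}$, via \eqref{eq-sec3-34} and H\"older with exponents $\frac{2}{1+i_a}$ and $\frac{2}{1-i_a}$.
\item Your absorption step uses $\gamma_0\ge 1-i_a$, but the lemma has $\gamma_0\le 1-i_a$, so H\"older goes the other way. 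In the second regime, a weak-type bound at an exponent below $\frac{ni_a}{n-1}\le 1-i_a$ cannot control $\fint|\nabla v|^{1-i_a}$ either. Since \eqref{eq-sec3-34} already removes $|\nabla w|$ from the weight pointwise, no such absorption is needed.
\end{itemize}

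In the paper, the restriction $i_a>\frac{n-1}{2n-1}$ enters through $\frac{n}{2(n-1)}>\frac{1-i_a}{2i_a}$. This lets the Sobolev step's weak-$L^{\frac{1-i_a}{2i_a}}$ norm of $u-w$ be dominated by the weak-$L^{q}$ norm and then absorbed by Young's inequality.
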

				\begin{proof} 
					Here, we only give the proof in the case $i_a \in \big(\frac{n-1}{2n-1},1\big)$. The proof for $i_a\in (0,\frac{n-1}{2n-1}]$ is similar, with ideas inspired by \cite[Lemma~2.1]{NP22-b}.  Without loss of generality, we assume that $B_{2r}(x_0)=B_1$ and $|\mu|(B_{2r})=1$. Indeed, this can be achieved by setting
					\begin{equation}\begin{split}
							&\bar{u}(x)=\frac{u(x_0+2rx)}{2Ar},\qquad  \bar{\mu}(x)=\frac{2r\mu(x_0+2rx)}{g(A)},\\
							&\bar{\cA}(x,z)=\frac{\cA(x_0+2rx, Az)}{g(A)},\qquad A:=g^{-1}\left(\frac{|\mu|(B_{2r})}{(2r)^{n-1}}\right).
					\end{split}\end{equation}

					For $k>0$, using 
					\[\varphi_1= T_{2k}(u-w):=\max\{\min\{u-w, 2k\},-2k\}\]
					as  test function in \eqref{eq-main} and \eqref{eq-sec3-1} and recalling~\eqref{2.4}, we have 
					\begin{equation}\label{test1}
						\int_{C_k}|V(\nabla u)-V(\nabla w)|^2\,dx \leq Ck,
					\end{equation}
					where $C_k:=\{x\in B_1: |u(x)-w(x)|<2k\}$. 
					According to \eqref{eq:g-cond},  we know that $\frac{g(t)}{t}$ is strictly decreasing. Thus, by the triangle inequality, we have 
					\begin{equation*}
						\begin{split}
							|\nabla (u-w)|& \leq C\left(\frac{g(|\nabla u|+|\nabla w|)}{|\nabla u|+|\nabla w|}\right)^{-\frac{1}{2}}|V(\nabla u)-V(\nabla w)| \\
							&\leq C \left(\frac{g(|\nabla u|+|\nabla w|+1)}{|\nabla u|+|\nabla w|+1}\right)^{-\frac{1}{2}} \left| V(\nabla u)-V(\nabla w)\right|\\ 
							&\leq C \left(|\nabla u|+|\nabla w|+1\right)^{\frac{1-i_a}{2}} \left| V(\nabla u)-V(\nabla v)\right|\\
							&\leq C|V(\nabla u)-V(\nabla w)|(|\nabla u|+C|\nabla u-\nabla w|+1)^{\frac{1-i_a}{2}}\\
							&\leq C |V(\nabla u)-V(\nabla w)|(|\nabla u-\nabla w|)^{\frac{1-i_a}{2}}+C|V(\nabla u)-V(\nabla w)|(|\nabla u|+1)^{\frac{1-i_a}{2}}.
						\end{split}
					\end{equation*}
					Using Young's inequality, we find that 
					\begin{equation}\label{eq-sec3-34}
						|\nabla u-\nabla w| \leq C|V(\nabla u)-V(\nabla w)|^{\frac{2}{1+i_a}}+C|V(\nabla u)-V(\nabla w)|(|\nabla u|+1)^{\frac{1-i_a}{2}}.
					\end{equation}
					Now we set 
					\[ E_k=B_1\cap \{k<|u-w|<2k\}\quad \quad F_k=B_1\cap\{|u-w|>k\}.\]
					Using the Sobolev inequality, H\"{o}lder inequality and \eqref{eq-sec3-34}, we obtain 
					\begin{equation}\begin{split}
							&k\left|\{x:|u(x)-w(x)|>2k \}\cap B_1\right|^{\frac{n-1}{n}}\\
							&\quad \leq C\left( \int_{B_1}|T_{2k}(u-w)-T_{k}(u-w)|^{\frac{n}{n-1}}\,dx\right)^{\frac{n-1}{n}}\\
							&\quad \leq C\int_{E_k}|\nabla (u-w)|\,dx \\
							&\quad \leq C\int_{E_k}|V(\nabla u)-V(\nabla w)|^{\frac{2}{1+i_a}}+|V(\nabla u)-V(\nabla w)|(|\nabla u|+1)^{\frac{1-i_a}{2}}\,dx\\
							&\quad \leq C|E_{k}|^{\frac{i_a}{1+i_a}} \left(\int_{E_{k}}|V(\nabla u)-V(\nabla w)|^2\,dx\right)^{\frac{1}{1+i_a}}\\
							&\quad \quad +C\left(\int_{E_k}|V(\nabla v)-V(\nabla w)|^2\,dx\right)^{\frac{1}{2}}\left(\int_{E_k}(|\nabla u|+1)^{1-i_a}\,dx\right)^{\frac{1}{2}}.
						\end{split}
					\end{equation}
				It follows  from \eqref{test1} that
					\[
					k^{\frac12}|F_{2k}|^{\frac{n-1}{n}}
					\leq
					Ck^{-\frac12+\frac{1}{1+i_a}}|E_k|^{\frac{i_a}{1+i_a}}
					+ C Q_1^{\frac{1-i_a}{2}},
					\]
					where $Q_1:=\||\nabla u|+1\|_{L^{1-i_a}(B_1)}$. Taking the supremum over $k>0$ yields
					\begin{equation}
						\|u-w\|^{\frac12}_{L^{\frac{n}{2(n-1)},\infty}(B_1)}
						\leq
						C\|u-w\|^{\frac{1-i_a}{2(1+i_a)}}_{L^{\frac{1-i_a}{2i_a},\infty}(B_1)}
						+ C Q_1^{\frac{1-i_a}{2}} .
					\end{equation}
					Since $\frac{n-1}{2n-1}< i_a<1$, one has
					\[
					\frac{n}{2(n-1)}>\frac{1-i_a}{2i_a},
					\]
					and therefore
					\begin{equation}
						\|u-w\|^{\frac12}_{L^{\frac{n}{2(n-1)},\infty}(B_1)}
						\leq
						C\|u-w\|^{\frac{1-i_a}{2(1+i_a)}}_{L^{\frac{n}{2(n-1)},\infty}(B_1)}
						+ C Q_1^{\frac{1-i_a}{2}} .
					\end{equation}
					An application of Young's inequality gives
					\begin{equation}\label{ineq:weak}
						\|u-w\|^{\frac{1}{2}}_{L^{\frac{n}{2(n-1)},\infty}(B_1)}
						\leq
						C + C Q_1^{\frac{1-i_a}{2}} .
					\end{equation}
					Set
					\[
					H(u,w)=|V(\nabla u)-V(\nabla w)|^2,
					\qquad
					q=\frac{n}{2(n-1)} .
					\]
					For $k,l>0$, by Chebyshev's inequality and \eqref{test1}, we have
					\begin{align*}
						|\{H(u,w)>l\}\cap B_1|
						&\leq |\{|u-w|>k\}\cap B_1|
						+ |\{|u-w|\le k,\;H(u,w)>l\}\cap B_1| \\
						&\leq Ck^{-q}\|u-w\|^q_{L^{q,\infty}(B_1)}
						+ \frac{1}{l}\!\int_{B_1\cap\{|u-w|\le k\}} H(u,w)\,dx \\
						&\leq Ck^{-q}\|u-w\|^q_{L^{q,\infty}(B_1)} + \frac{Ck}{l}.
					\end{align*}
					Choosing
					\[
					k=\big[l\|u-w\|^q_{L^{q,\infty}(B_1)}\big]^{\frac{1}{1+q}},
					\]
					one obtains
					\[
					l^{\frac{q}{1+q}}
					|\{H(u,w)>l\}\cap B_1|
					\leq
					C\|u-w\|^{\frac{q}{1+q}}_{L^{q,\infty}(B_1)} .
					\]
					Taking the supremum over $l>0$ gives
					\begin{equation}\label{g1:weak}
						\|H(u,w)\|_{L^{\frac{q}{1+q},\infty}(B_1)}
						\leq
						C\|u-w\|_{L^{q,\infty}(B_1)} .
					\end{equation}
					
					Let $\gamma_0\in (0,1-i_a]$, then we have 
					\[
					\frac{\gamma_0}{1+i_a}<\frac{q}{1+q}.
					\]
					Thus, using \eqref{eq-sec3-34} and H\"older's inequality with exponents
					$\frac{2}{1-i_a}$ and $\frac{2}{1+i_a}$, we infer
					\begin{equation}\label{ineq:gamma0}
						\begin{aligned}
							&	\int_{B_1}|\nabla(u-w)|^{\gamma_0}\,dx\\
							&\quad\leq
							C\int_{B_1}
							\Big(
							H(u,w)^{\frac{\gamma_0}{1+i_a}}
							+ H(u,w)^{\frac{\gamma_0}{2}}
							(|\nabla u|+1)^{\frac{\gamma_0(1-i_a)}{2}}
							\Big) dx \\
							&\quad \leq C\int_{B_1}	H(u,w)^{\frac{\gamma_0}{1+i_a}}\,dx+C\left(\int_{B_1}	H(u,w)^{\frac{\gamma_0}{1+i_a}}\,dx\right)^{\frac{1+i_a}{2}}\left(\int_{B_1}(|\nabla u|+1)^{\gamma_0}\,dx
							\right)^{\frac{1-i_a}{2}}\\
							&\quad \leq
							C\|H(u,w)\|^{\frac{\gamma_0}{1+i_a}}_{L^{\frac{q}{1+q},\infty}(B_1)}
							+ C\|H(u,w)\|^{\frac{\gamma_0}{2}}_{L^{\frac{q}{1+q},\infty}(B_1)}
							Q_1^{\frac{\gamma_0(1-i_a)}{2}} .
						\end{aligned}
					\end{equation}
					Combining \eqref{ineq:weak}, \eqref{g1:weak} and \eqref{ineq:gamma0} yields
					\[
					\int_{B_1}|\nabla(u-w)|^{\gamma_0}\,dx
					\leq
					C + C Q_1^{\gamma_0(1-i_a)},
					\]
					which implies the desired result.
				\end{proof}

				\begin{lemma}  
					Assume that $w $ be a solution to \eqref{eq-sec3-1}. 
					If $v\in w+W_0^{1,G}(B_r(x_0))$ be the unique solution to
					\begin{equation}\label{eqa:v}
						\left\{
						\begin{aligned}
							-\DIV(\cA(x_0,\nabla v)) =&  0 \quad \text{in} \quad B_{r}(x_0), &\\
							v =&  w \quad \text{on}  \quad \partial B_{r}(x_0),&\\
						\end{aligned}
						\right.
					\end{equation}
					then  we have 
					\begin{equation}\label{eq-vw}
						\fint_{B_r(x_0)}G(|\nabla v-\nabla w|)\,dx\leq C\omega^{\frac{2i_a}{1+i_a}}(r)\fint_{B_{r}(x_0)}G(|\nabla w|)\,dx.
					\end{equation}
				\end{lemma}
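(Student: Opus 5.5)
We test both equations with $\varphi=v-w\in W^{1,G}_0(B_r(x_0))$ and use the monotonicity inequality \eqref{eq-2.3} with the choice $\varepsilon=\omega(r)^{\frac{2i_a}{1+i_a}}$. Since $v$ solves \eqref{eqa:v} and $w$ solves $-\DIV\cA(x,\nabla w)=0$ in $B_r(x_0)\subset B_{2r}(x_0)$, subtracting the weak formulations gives
\[
\int_{B_r}\bigl(\cA(x_0,\nabla v)-\cA(x_0,\nabla w)\bigr)\cdot(\nabla v-\nabla w)\,dx
=\int_{B_r}\bigl(\cA(x,\nabla w)-\cA(x_0,\nabla w)\bigr)\cdot(\nabla v-\nabla w)\,dx .
\]
By the third condition in \eqref{eq:struct3} and monotonicity of $\omega$, the right-hand side is at most $\omega(r)\int_{B_r}g(|\nabla w|)\,|\nabla v-\nabla w|\,dx$.

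To absorb this term, I apply Young's inequality \eqref{eq-Young} in the weighted form
\[
\omega(r)\,g(|\nabla w|)\,|\nabla v-\nabla w|\le \delta\,G(|\nabla v-\nabla w|)+C(\delta)\,G^\ast\bigl(\omega(r)g(|\nabla w|)\bigr).
\]
Since $G^\ast$ has lower index $1+1/s_a>2$ (as $s_a<1$), we have $G^\ast(\omega g(t))\le C\omega^{1+1/s_a}G^\ast(g(t))\le C\omega^{2}G(t)$ for $\omega\le1$. On the left-hand side, \eqref{eq-2.3} with $\xi=\nabla v$, $\eta=\nabla w$ gives
\[
\int_{B_r}G(|\nabla v-\nabla w|)\le C\varepsilon^{\frac{i_a-1}{1+i_a}}\,(\text{LHS})+\varepsilon\int_{B_r}G(|\nabla w|).
\]
Combining these yields
\[
\int G(|\nabla v-\nabla w|)\le C\varepsilon^{\frac{i_a-1}{1+i_a}}\Bigl[\delta\int G(|\nabla v-\nabla w|)+C(\delta)\,\omega^2\int G(|\nabla w|)\Bigr]+\varepsilon\int G(|\nabla w|).
\]
Choosing $\delta=\tfrac12 C^{-1}\varepsilon^{\frac{1-i_a}{1+i_a}}$ absorbs the first term into the left-hand side. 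This makes $C(\delta)$ depend on $\varepsilon$, which is the delicate point.

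The cleaner route avoids $\delta$ altogether. By the first inequality in \eqref{eq-2.3} and \eqref{2.5}, $G(|\nabla v-\nabla w|)\le C\varepsilon^{\frac{i_a-1}{1+i_a}}|V(\nabla v)-V(\nabla w)|^2+\varepsilon G(|\nabla w|)$. From \eqref{2.4} and the identity above,
\[
\int|V(\nabla v)-V(\nabla w)|^2\le C\omega\int g(|\nabla w|)\,|\nabla v-\nabla w|.
\]
Using the equivalence $|\nabla v-\nabla w|\le C\bigl(\frac{g(|\nabla v|+|\nabla w|)}{|\nabla v|+|\nabla w|}\bigr)^{-1/2}|V(\nabla v)-V(\nabla w)|$, together with $g(|\nabla w|)\le g(|\nabla v|+|\nabla w|)$ and Cauchy--Schwarz, gives
\[
\int g(|\nabla w|)|\nabla v-\nabla w|\le C\Bigl(\int|V(\nabla v)-V(\nabla w)|^2\Bigr)^{1/2}\Bigl(\int g(|\nabla v|+|\nabla w|)(|\nabla v|+|\nabla w|)\Bigr)^{1/2}.
\]
Since $g(t)t\simeq G(t)$ and $\int G(|\nabla v|)\le C\int G(|\nabla w|)$ (energy comparison by minimality/testing), we obtain $\int|V(\nabla v)-V(\nabla w)|^2\le C\omega^2\int G(|\nabla w|)$.

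Plugging this in gives
\[
\int G(|\nabla v-\nabla w|)\le C\bigl(\varepsilon^{\frac{i_a-1}{1+i_a}}\omega^2+\varepsilon\bigr)\int G(|\nabla w|).
\]
With $\varepsilon=\omega^{\frac{2i_a}{1+i_a}}$, one has $\varepsilon^{\frac{i_a-1}{1+i_a}}\omega^2=\omega^{2-\frac{2i_a(1-i_a)}{(1+i_a)^2}}$. Since $\frac{2i_a(1-i_a)}{(1+i_a)^2}<1$, the exponent exceeds $1\ge\frac{2i_a}{1+i_a}$, and as $\omega\le1$ both terms are bounded by $\omega^{\frac{2i_a}{1+i_a}}$. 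Dividing by $|B_r|$ gives \eqref{eq-vw}. (Optimizing $\varepsilon$ would give an even better power; the stated one suffices.)

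The main obstacle is the energy bound $\int_{B_r}G(|\nabla v|)\le C\int_{B_r}G(|\nabla w|)$. I would obtain it by testing \eqref{eqa:v} with $v-w$ and using the first two conditions in \eqref{eq:struct3}, which give $\cA(x_0,\xi)\cdot\xi\ge cG(|\xi|)$ and $|\cA(x_0,\xi)|\le\Lambda g(|\xi|)$. Young's inequality with $G^\ast(g(t))\le CG(t)$ then absorbs $\int g(|\nabla v|)|\nabla w|$ into the left-hand side.
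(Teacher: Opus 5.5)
Your final route is correct; you can simply drop the $\delta$-attempt. It is organized differently from the paper's proof.

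\textbf{The paper's route.} The paper splits the difference as $[\cA(x,\nabla w)-\cA(x,\nabla v)]+[\cA(x,\nabla v)-\cA(x_0,\nabla v)]$, so the coefficient error sits on $\nabla v$. It never absorbs anything. It writes $\omega(r)g(|\nabla v|)|\nabla w-\nabla v|\le\omega(r)\bigl[G(|\nabla w-\nabla v|)+CG^\ast(g(|\nabla v|))\bigr]$ and bounds both integrals by $C\int_{B_r}G(|\nabla w|)$, using the $\Delta_2$-condition and the energy comparison \eqref{eq-sec3-311}. This makes the monotonicity integral $O(\omega(r))\int_{B_r}G(|\nabla w|)$. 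Choosing $\varepsilon=\omega(r)$ in \eqref{eq-2.3} then gives exactly the exponent $\frac{i_a-1}{1+i_a}+1=\frac{2i_a}{1+i_a}$. The $\delta$-dependence you worried about in your first attempt never arises there, because the paper does not try to absorb.

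\textbf{Your route.} You put the error on $\nabla w$, then use \eqref{2.4}, the two-sided equivalence \eqref{2.5} and Cauchy--Schwarz. This costs one more step but yields the quadratic bound $\int_{B_r}|V(\nabla v)-V(\nabla w)|^2\le C\omega(r)^2\int_{B_r}G(|\nabla w|)$. With $\varepsilon=\omega(r)^{1+i_a}$ this gives a sharper estimate, with $\omega^{1+i_a}$ in place of $\omega^{\frac{2i_a}{1+i_a}}$. That is the analogue of the familiar $\omega^p$ rate for the $p$-Laplacian with $p=1+i_a$.

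\textbf{What the sharper bound buys.} The paper later takes the $(1+i_a)$-th root of \eqref{eq-vw} in \eqref{eq-321}. Your version would therefore turn the power $\omega^{\frac{2i_a}{(1+i_a)^2}}$ appearing there, and in the Dini-type hypothesis \eqref{eq:dini}, into $\omega$ at that step.

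Both arguments share the edge cases $\omega(r)\in\{0,1\}$, where $\varepsilon\notin(0,1)$ but the estimate is immediate.
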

				\begin{proof} 
					By testing  \eqref{eqa:v} with $w-v$, we obtain that 
					\begin{equation}
						\begin{split}
							C\int_{B_r(x_0)} G(|\nabla v|)\,dx &\leq \int_{B_r(x_0)} \cA(x_0,\nabla v)\cdot \nabla v\,dx= \int_{B_r(x_0)} \cA(x_0,\nabla v )\cdot \nabla w\,dx\\
							&\leq C\int_{B_r(x_0)}g(|\nabla v|)|\nabla w|\,dx\\
							&\leq \varepsilon\int_{B_r(x_0)} G^\ast(g(|\nabla v|))\,dx +C(\varepsilon)\int_{B_r(x_0)} G(|\nabla w|)\,dx,					
						\end{split}
					\end{equation}
					which implies that 
					\begin{equation}\label{eq-sec3-311}
						\int_{B_r(x_0)}G(|\nabla v|)\,dx \leq C\int_{B_r(x_0)}G(|\nabla w|)\,dx.
					\end{equation}
					By testing \eqref{eq-sec3-1} and \eqref{eqa:v} with $w-v$, we obtain the equality 
					\[I_1=I_2,\]
					where 
					\begin{equation}
						\begin{split}
							&I_1:= \int_{B_r(x_0)}\left( \cA(x,\nabla w)-\cA(x,\nabla v)\right)\cdot (\nabla w-\nabla v),\,\\
							&I_2:=-\int_{B_r(x_0)}\left(\cA(x,\nabla v)-\cA(x_0,\nabla v)\right)\cdot (\nabla w-\nabla v).
						\end{split}
					\end{equation}
					
					\textit{Estimate of $I_1$.} \eqref{eq-2.3} implies that 
					\[C\int_{B_r(x_0)}G(|\nabla w-\nabla v|)\,dx \leq C\varepsilon^{\frac{i_a-1}{1+i_a}}I_1+\varepsilon\int_{B_r(x_0)}G(|\nabla v|)\,dx.\]
					
					\textit{Estimate of $I_2$.} From \eqref{eq:struct3}, Young's inequality and \eqref{eq-sec3-311}, we obtain 
					\begin{equation}
						\begin{split}
							|I_2|&\leq \int_{B_r(x_0)} \left|\cA(x,\nabla v)-\cA(x_0,\nabla v)\right||\nabla w-\nabla v|\,dx\\
							&\leq\omega(r)\int_{B_r(x_0)} g(|\nabla v|)|\nabla w-\nabla v|\,dx\\
							&\leq  \omega(r)\int_{B_r(x_0)}G(|\nabla w-\nabla v|)\,dx +\omega(r) C\int_{B_r(x_0)} G^\ast(g(|\nabla v|))\,dx\\
							&\leq \omega(r)\int_{B_r(x_0)}G(|\nabla w-\nabla v|)\,dx +\omega(r)C \int_{B_r(x_0)} G(|\nabla v|)\,dx\\
							&\leq \omega(r)\int_{B_r(x_0)}G(|\nabla w-\nabla v|)\,dx +\omega(r)C\int_{B_r(x_0)} G(|\nabla w|)\,dx\\
							&\leq C\omega(r)\int_{B_r(x_0)} G(|\nabla w|)\,dx.
						\end{split}
					\end{equation}
					Choosing $\varepsilon=\omega(r)$, we combine the above two estimates of $I_1$ and $I_2$  to conclude that
					\begin{equation}
						\int_{B_r(x_0)}G(|\nabla w-\nabla v|)\,dx \leq C \omega^{\frac{2i_a}{i_a+1}}(r)\int_{B_r(x_0)}G(|\nabla w|)\,dx.
					\end{equation}
				\end{proof}
				
				\begin{lemma}[Reverse H\"older inequality]\label{lemma-RH}
					Let $w$ be a solution to \eqref{eq-sec3-1}. Then for every $t\in (0,1)$, there exists a constant 
					$C(n,t,i_a,s_a,\lambda,\Lambda)>0$ such that
					\begin{equation}\label{eq-RH}
						\fint_{B_{\frac{\rho}{2}}(y)} G(|\nabla w|)\,dx
						\le
						C\left(\fint_{B_{\rho}(y)} G^t(|\nabla w|)\,dx\right)^{\frac1t}
					\end{equation}
					holds for every ball $B_{\rho}(y)\subset B_{2r}(x_0)$.
				\end{lemma}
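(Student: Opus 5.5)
The plan is the classical route to reverse Hölder inequalities: a Caccioppoli inequality, a Sobolev–Poincaré inequality with exponent below one, then self-improvement. Here each step is carried out in the Orlicz setting. Fix $B_\rho(y)\subset B_{2r}(x_0)$ and radii $\rho/2\le t<s\le\rho$. Let $\eta\in C_0^\infty(B_s(y))$ be a cutoff with $\eta\equiv1$ on $B_t(y)$ and $|\nabla\eta|\le 2/(s-t)$. We test the equation \eqref{eq-sec3-1} with $\varphi=\eta^{m}(w-(w)_{B_\rho(y)})$, with $m\ge 1+s_a$ large. The growth and ellipticity conditions in \eqref{eq:struct3} give
\[
\int\eta^m G(|\nabla w|)\,dx\le C\int \eta^{m-1}g(|\nabla w|)\frac{|w-(w)_{B_\rho}|}{s-t}\,dx .
\]
Young's inequality \eqref{eq-Young}, together with $G^\ast(g(\tau))\le CG(\tau)$ and the $\Delta_2$ property, absorbs the gradient term and yields the Caccioppoli inequality
\[
\int_{B_t}G(|\nabla w|)\,dx\le C\int_{B_s}G\Big(\frac{|w-(w)_{B_\rho}|}{s-t}\Big)dx .
\]
To avoid fussing with powers of $\eta$ one can instead use a hole-filling variant, producing $\theta\int_{B_s}G(|\nabla w|)$ on the right-hand side, and then remove it with Lemma~\ref{lemma-61}. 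This explains why that lemma was prepared with the term $G(A/(s-t))$.

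The fluctuation $A=|w-(w)_{B_\rho}|$ is not constant, so I would apply Lemma~\ref{lemma-61} in a form where the last integral is $\int_{B_s}G(|w-(w)_{B_\rho}|/(s-t))$. The same iteration works, because the scaling bound $G(\lambda\tau)\le\lambda^{1+s_a}G(\tau)$ for $\lambda\ge1$ from Lemma~\ref{lem:Gg} is applied pointwise. Taking $t=\rho/2$ and $s=\rho$, this gives
\[
\fint_{B_{\rho/2}}G(|\nabla w|)\le C\fint_{B_\rho}G\Big(\frac{|w-(w)_{B_\rho}|}{\rho}\Big)dx .
\]
Then Lemma~\ref{lem:orlicz-poincare}, valid since $G\in\Delta_2\cap\nabla_2$, bounds the right-hand side by $C(\fint_{B_\rho}G(|\nabla w|)^{\eta}dx)^{1/\eta}$ for some $\eta\in(0,1)$. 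This is the weak reverse Hölder inequality for $f=G(|\nabla w|)$ with exponent $\eta$, valid on every ball inside $B_{2r}(x_0)$.

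It remains to bring the exponent $\eta$ down to an arbitrary $t\in(0,1)$. Lemma~\ref{lem-book} upgrades the inequality to $(\fint_{B_{\rho/2}}f^{d_0})^{1/d_0}\le c_2\fint_{B_\rho}f$ for some $d_0>1$ and all balls. Lemma~\ref{lem-31dm}, applied with $d=1$ and the given $t$, then yields $\fint_{B_{\rho/2}}f\le C(\fint_{B_\rho}f^{t})^{1/t}$, which is \eqref{eq-RH}. The constant depends only on $n,t,i_a,s_a,\lambda,\Lambda$, since Lemma~\ref{lemma-61} and the Caccioppoli inequality only use \eqref{eq:struct3} and Lemma~\ref{lem:Gg}.

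The main obstacle is the Caccioppoli step with the non-homogeneous $G$. Lacking homogeneity, one cannot simply pull $(s-t)^{-1}$ out of $G$. This is where Lemma~\ref{lemma-61}, in its pointwise-$A$ form, is needed rather than a direct absorption. A secondary obstacle is checking that the Orlicz Poincaré inequality of Lemma~\ref{lem:orlicz-poincare} is applied with matching scaling $|w-(w)_{B_\rho}|/\rho$. We have $s-t=\rho/2$, so the factor $2$ is harmless by $\Delta_2$.
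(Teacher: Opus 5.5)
Your proposal is correct and follows the paper's proof essentially step by step. The paper also uses a hole-filling Caccioppoli inequality on $\rho/2\le t<s\le\rho$, then the iteration Lemma~\ref{lemma-61} with the fixed non-constant $A=|w-(w)_{B_\rho}|$ (the paper uses this silently, and you rightly justify it pointwise), then the Orlicz Poincar\'e inequality to obtain a weak reverse H\"older inequality for $G(|\nabla w|)$, and finally Lemmas~\ref{lem-book} and~\ref{lem-31dm} with $d=1$.

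The only difference is how the Caccioppoli inequality is obtained: the paper compares with $z=w-\eta(w-(w)_{B_\rho})$. Your direct test with $\eta^m(w-(w)_{B_\rho})$, $m\ge 1+s_a$, already gives $\int_{B_t}G(|\nabla w|)\,dx\le C\int_{B_s}G(|w-(w)_{B_\rho}|/(s-t))\,dx$ without Lemma~\ref{lemma-61}. So your closing claim that the iteration lemma is indispensable is overstated, though harmless.
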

				
				\begin{proof}
					Let $y\in\Omega$ and $\rho>0$ with $B_{\rho}(y)\subset B_{2r}(x_0)$.
					Let $  \frac\rho2 \le t<s\le \rho$ and let $\eta\in C_0^\infty(B_s(y))$ be a cut--off function such that
					\[
					0\le \eta\le 1,\qquad 
					\eta\equiv 1 \ \text{in } B_t(y),\qquad 
					|\nabla \eta|\le \frac{C}{s-t}.
					\]
					Set
					\[
					\varphi=\eta\big(w-(w)_{B_{\rho}}\big), 
					\qquad 
					z:=w-\varphi=(w)_{B_{\rho}}+(1-\eta)\big(w-(w)_{B_{\rho}}\big).
					\]
					Then $\nabla\varphi=\nabla w$ in $B_t(y)$ and
					\[
					\nabla z=(1-\eta)\nabla w-(w-(w)_{B_{\rho}})\nabla\eta .
					\]

					Observe that
					\[
					\int_{B_t}G(|\nabla w|)\,dx =\int_{B_t}G(|\nabla \varphi|)\,dx
					\le \int_{B_s}G(|\nabla\varphi|)\,dx .
					\]
					Applying the $\Delta_2$-condition of $G$ together with $\nabla\varphi=\nabla w-\nabla z$ gives
					\[
					\int_{B_s}G(|\nabla\varphi|)
					\le c\int_{B_s}G(|\nabla w|)\,dx
					+c\int_{B_s}G(|\nabla z|)\,dx .
					\]
					Since $\varphi=w-z$ is an admissible test function in \eqref{eq-sec3-1}, arguing as in \eqref{eq-sec3-311} leads to
					\[
					\int_{B_s}G(|\nabla w|)\,dx
					\le c\int_{B_s}G(|\nabla z|)\,dx .
					\]
					Combining these estimates, we arrive at
					\begin{equation}\label{hf}
						\int_{B_t}G(|\nabla w|)\,dx
						\le c\int_{B_s}G(|\nabla z|)\,dx .
					\end{equation}
					Next we estimate $\nabla z$. By the definition of $z$ and the $\Delta_2$--condition,
					\[
					\int_{B_s}G(|\nabla z|)\,dx
					\le c\int_{B_s}G\big((1-\eta)|\nabla w|\big)\,dx
					+c\int_{B_s}G\big(|w-(w)_{B_{\rho}}||\nabla\eta|\big)\,dx .
					\]
					Since $(1-\eta)=0$ in $B_t$ and $|\nabla\eta|\le C/(s-t)$, we get
					\[
					\int_{B_s}G(|\nabla z|)\,dx
					\le c\int_{B_s\setminus B_t}G(|\nabla w|)\,dx
					+c\int_{B_s}G\!\left(\frac{|w-(w)_{B_{\rho}}|}{s-t}\right)\!dx .
					\]
					Inserting this into \eqref{hf} gives
					\[
					\int_{B_t}G(|\nabla w|)\,dx
					\le c\int_{B_s\setminus B_t}G(|\nabla w|)\,dx
					+c\int_{B_s}G\!\left(\frac{|w-(w)_{B_{\rho}}|}{s-t}\right)\!dx .
					\]
					
					Adding $c\int_{B_t}G(|\nabla w|)\,dx$ to both sides and dividing by $c+1$,
					we obtain the standard hole--filling inequality
					\begin{equation}\label{eq0}
						\int_{B_t}G(|\nabla w|)\,dx\le \frac{c}{c+1}\int_{B_s}G(|\nabla w|)\,dx
						+c\int_{B_s}G\!\left(\frac{|w-(w)_{B_{\rho}}|}{s-t}\right)\!dx.
					\end{equation}
					
						Applying Lemma~\ref{lemma-61} yields
					\[
					\int_{B_{\rho/2}}G(|\nabla w|)\,dx
					\le C\int_{B_\rho}G\!\left(\frac{|w-(w)_{B_\rho}|}{\rho}\right)\,dx .
					\]
	We infer from Lemma~\ref{lem:orlicz-poincare} that there exists
					$\widetilde\theta\in(0,1)$, such that
					\[
					\fint_{B_{\rho/2}} G(|\nabla w|)\,dx\le C\fint_{B_\rho}G\!\left(\frac{|w-(w)_{B_\rho}|}{\rho}\right)\,dx
					\le
					C\Big(\fint_{B_\rho} G^{\widetilde\theta}(|\nabla w|)\,dx\Big)^{1/\widetilde\theta}.
					\]
	Lemma~\ref{lem-book} then implies the existence of a constant \(d_0>1\) for which
					\[
					\Big(\fint_{B_{\rho/2}} G^{d_0}(|\nabla w|)\,dx\Big)^{1/d_0}
					\le
					C\fint_{B_{\rho}} G(|\nabla w|)\,dx .
					\]
By Lemma~\ref{lem-31dm}, it follows that for every
					$t\in(0,1]$ there exists a constant $C>0$ such that
					\[
					\fint_{B_{\rho/2}} G(|\nabla w|)\,dx
					\le
					C\Big(\fint_{B_\rho} G^t(|\nabla w|)\,dx\Big)^{1/t},
					\]
					where $C>0$ depends on $\{t,n, i_a, s_a,\lambda,\Lambda\}$.

				\end{proof}
				
				For a ball $B_\rho(x)\Subset\Omega$ and a function $f\in  W_{\text{loc}}^{1,G}(\Omega)$, there exists $\mathbf{q}_{x,\rho}(f)\in \R^n$ such that
				\begin{equation*}
					\left(\fint_{B_\rho(x)}|\nabla f-\mathbf{q}_{x,\rho}(f)|^{\gamma_0}\,dx\right)^{1/\gamma_0}=\inf_{\mathbf{q}\in \R^n}\left(\fint_{B_\rho(x)}|\nabla f-\mathbf{q}|^{\gamma_0}\,dx\right)^{1/\gamma_0}.
				\end{equation*}
				We denote $\mathbf{q}_{x,\rho}=\mathbf{q}_{x,\rho}(u)$ and
				\begin{equation*}
					\phi(x,\rho)=\inf_{\mathbf{q}\in \R^n}\left(\fint_{B_\rho(x)}|\nabla u-\mathbf{q}|^{\gamma_0}\,dx\right)^{1/\gamma_0}.
				\end{equation*}
				Since
				$$
				|\mathbf{q}_{x,\rho}-\nabla u(x)|^{\gamma_0}\leq C|\mathbf{q}_{x,\rho}-\nabla u(\sigma)|^{\gamma_0}+C|\nabla u(\sigma)-\nabla u(x)|^{\gamma_0},
				$$
				by taking the average over $\sigma \in B_{ \rho}(x)$ and then taking the $\gamma_0$-th root, we obtain
				\begin{align*}
					&|\mathbf{q}_{x,\rho}-\nabla u(x)|\leq C\phi(x, \rho)+C\Big(\fint_{B_{ \rho}(x)}|\nabla u(\sigma)-\nabla u(x)|^{\gamma_0}\,d\sigma\Big)^{1/\gamma_0}.
				\end{align*}
				Therefore, from the definition of $\phi$ and the fact that $\gamma_0\in (0,1 )$, we obtain that
				\begin{equation}\label{limit}
					\lim_{\rho\rightarrow 0}\mathbf{q}_{x,\rho}=\nabla u(x)
				\end{equation}
				holds for any Lebesgue point $x\in \Omega$ of the vector-valued function $\nabla u$.
				\begin{proposition}\label{prop1}
					Suppose that $u\in W^{1,G}_{\mathrm{loc}}(\Omega)$ is a solution of \eqref{eq-main}.
					Then there exist constants $\alpha\in(0,1)$ and $\gamma_0\in(0,1)$, as in
					Lemmas~\ref{lem-dong21} and \ref{lem-32}, such that the following holds.
					For every $\varepsilon\in(0,1)$ there exist constants
					$C_\varepsilon>0$ and $C>0$, where $C_\varepsilon$ depends only on
					$\varepsilon, n,i_a,s_a,\lambda,\Lambda$ and $\gamma_0$, and $C$ depends
					only on the structural data $n,i_a,s_a,\lambda,\Lambda$ and $\gamma_0$,
					such that for every ball $B_{2r}(x_0)\Subset\Omega$, one has
					\begin{equation}\label{ineq:prop}
						\begin{aligned}
							\phi(x_0,\varepsilon r)
							\le\;&
							C\,\varepsilon^{\alpha}\phi(x_0,r)
							+ C_\varepsilon\, g^{-1}\!\left(\frac{|\mu|(B_{2r}(x_0))}{r^{\,n-1}}\right)\\
							&\quad
							+ C_\varepsilon
							\Bigg(g^{-1}\!\left(\frac{|\mu|(B_{2r}(x_0))}{r^{\,n-1}}\right)\Bigg)^{i_a}
							\fint_{B_{2r}(x_0)}(|\nabla u|+1)^{1-i_a}\,dx\\
							&\quad
							+ C_\varepsilon\,\omega^{\frac{2i_a}{(1+i_a)^2}}(r)
							\left(\fint_{B_{2r}(x_0)}(|\nabla u|+1)^{1-i_a}\,dx\right)^{\!\frac{1}{1-i_a}}.
						\end{aligned}
					\end{equation}
				\end{proposition}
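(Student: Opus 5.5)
We follow the standard two-step comparison of \cite{dong-arma-12,NP20}, using $w$ from \eqref{eq-sec3-1} on $B_{2r}(x_0)$ and $v$ from \eqref{eqa:v} on $B_r(x_0)$. We may assume $\varepsilon\le 1/2$; for $\varepsilon>1/2$ the bound follows by comparing $\phi(x_0,\varepsilon r)$ with $\phi(x_0,r)$ through the quasi-triangle inequality for $\gamma_0<1$. Since $|a+b|^{\gamma_0}\le|a|^{\gamma_0}+|b|^{\gamma_0}$, for any $\mathbf q$ we have
\[
\phi(x_0,\varepsilon r)\le C\Big(\fint_{B_{\varepsilon r}}|\nabla v-\mathbf q|^{\gamma_0}\Big)^{1/\gamma_0}+C\varepsilon^{-n/\gamma_0}\Big[\Big(\fint_{B_{r}}|\nabla u-\nabla w|^{\gamma_0}\Big)^{1/\gamma_0}+\Big(\fint_{B_{r}}|\nabla w-\nabla v|^{\gamma_0}\Big)^{1/\gamma_0}\Big].
\]
Taking the infimum over $\mathbf q$ and applying Lemma~\ref{lem-dong21} to $v$, which solves a frozen-coefficient equation on $B_r(x_0)$, bounds the first term by $C\varepsilon^{\alpha}\inf_{\mathbf q}(\fint_{B_r}|\nabla v-\mathbf q|^{\gamma_0})^{1/\gamma_0}$. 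Splitting $\nabla v=\nabla u+(\nabla w-\nabla u)+(\nabla v-\nabla w)$ once more, this is at most
\[
C\varepsilon^{\alpha}\phi(x_0,r)+C\big[\|\nabla u-\nabla w\|_{\gamma_0,B_r}+\|\nabla w-\nabla v\|_{\gamma_0,B_r}\big].
\]
It therefore remains to bound the two comparison terms with constants $C_\varepsilon$.

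For $u-w$, we enlarge $B_r$ to $B_{2r}$ at the cost of a factor $2^{n/\gamma_0}$. Lemma~\ref{lem-32} then gives exactly the second and third terms of \eqref{ineq:prop}. Here we take $\gamma_0=1-i_a$, or any admissible $\gamma_0$ in the two regimes of Lemma~\ref{lem-32}, and fix it throughout.

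For $w-v$, we pass from $L^{\gamma_0}$ to the modular. Jensen's inequality, the concavity of $t\mapsto t^{\gamma_0}$, and Lemma~\ref{lem:Gg} together with \eqref{eq-0} give $G(t)\gtrsim t^{1+i_a}$ for $t\ge1$, so
\[
\Big(\fint_{B_r}|\nabla w-\nabla v|^{\gamma_0}\Big)^{1/\gamma_0}\le 1\wedge\cdots+C\Big(\fint_{B_r}G(|\nabla w-\nabla v|)\Big)^{\frac1{1+i_a}}.
\]
More carefully, we use $t\le 1+G(t)^{1/(1+i_a)}$, which requires $g(t)\ge ct^{i_a}$ and is exactly where \eqref{eq-0} enters. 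We then apply \eqref{eq-vw}, which gives the factor $\omega(r)^{\frac{2i_a}{(1+i_a)^2}}(\fint_{B_r}G(|\nabla w|))^{1/(1+i_a)}$. Next, Lemma~\ref{lemma-RH} with exponent $t=\frac{1-i_a}{1+s_a}$, applied on $B_r\subset B_{2r}$ (after a covering or the $\rho/2$ version), gives
\[
\fint_{B_r}G(|\nabla w|)\le C\Big(\fint_{B_{2r}}G(|\nabla w|)^{t}\Big)^{1/t}\le C\Big(\fint_{B_{2r}}(|\nabla w|+1)^{1-i_a}\Big)^{\frac{1+s_a}{1-i_a}},
\]
using $G(s)\le C(s+1)^{1+s_a}$.

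The main obstacle is that the exponents do not match: we need $(\fint G(|\nabla w|))^{1/(1+i_a)}$ to be controlled by $(\fint(|\nabla w|+1)^{1-i_a})^{1/(1-i_a)}$, but the last display produces the power $\frac{1+s_a}{1+i_a}>1$. My first attempt would be to avoid the crude bound $G(s)\le(1+s)^{1+s_a}$. Instead I would use \eqref{eq-0} and the convexity of $G$ to compare $G$ with $G_{i_a}(s)=s^{1+i_a}$ in the reverse Hölder step, applying Lemma~\ref{lemma-RH} with $t=\frac{1-i_a}{1+i_a}$. If that fails, I would absorb the discrepancy with the normalization of Lemma~\ref{lem-32}, which reduces to $|\mu|(B_{2r})/r^{n-1}=1$ and a controlled gradient scale, so that $G$ is comparable to a power in the relevant range.

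Finally, $\nabla w$ is replaced by $\nabla u$ via the quasi-triangle inequality, $\|\nabla w\|_{1-i_a}\lesssim\|\nabla u\|_{1-i_a}+\|\nabla u-\nabla w\|_{1-i_a}$. The second term is bounded again by Lemma~\ref{lem-32} with $\gamma_0=1-i_a$; since $\omega\le1$, it is absorbed into the $\mu$-terms. Collecting all terms yields \eqref{ineq:prop}.
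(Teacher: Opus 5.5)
Your first two steps, the quasi-triangle inequality with Lemma~\ref{lem-dong21} applied to $v$ and the $u-w$ term via Lemma~\ref{lem-32} after enlarging $B_r$ to $B_{2r}$, are exactly the paper's \eqref{ineq:phi}--\eqref{ineq:u-v}. The gap is the $w-v$ term, and it cannot be closed along your route. The inequality you say you need, $(\fint_{B_r}G(|\nabla w|)\,dx)^{1/(1+i_a)}\le C(\fint_{B_{2r}}(|\nabla w|+1)^{1-i_a}dx)^{1/(1-i_a)}$, is false. Take $\cA$ independent of $x$, $w$ affine with $|\nabla w|\equiv M$, and $g(t)=t^{p-1}+t^{q-1}$ with $p<q$; then \eqref{eq-0} holds, $i_a=p-1$ and $s_a=q-1$. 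The left side is $G(M)^{1/(1+i_a)}\approx M^{(1+s_a)/(1+i_a)}\gg M$. So the loss already occurs when you turn \eqref{eq-vw} into an $L^{1+i_a}$ bound via \eqref{eq-0}, not in the reverse H\"older step. Neither of your remedies helps. \eqref{eq-0} is a lower bound on $G$, so it cannot improve an upper bound for $\fint G(|\nabla w|)$. The normalization in Lemma~\ref{lem-32} only fixes the measure scale $g^{-1}(|\mu|(B_{2r})/(2r)^{n-1})$, which says nothing about the size of $\nabla w$ (for $\mu=0$ it is void). Your variant $t\le 1+G(t)^{1/(1+i_a)}$ would also leave an additive constant not multiplied by $\omega(r)$. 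As written, the last term of \eqref{ineq:prop} is not established.

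The paper closes this step with \eqref{eq-gamma-sa}, claiming $\fint_{B_r}G(|\nabla w|)\le C(\fint_{B_{2r}}(|\nabla w|+1)^{\gamma_0})^{1/\gamma_0}$. That fails on the same example (left side $\ge cM^{1+i_a}$, right side $\approx M$), so your diagnosis is right and following the paper would not repair it.

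A route that works avoids \eqref{eq-0}. Set $s:=|\nabla w|+|\nabla v|$. Combine $I_1=I_2$, \eqref{2.4}--\eqref{2.5}, $g(|\nabla v|)\le g(s)=\sqrt{g(s)/s}\,\sqrt{s\,g(s)}$, Cauchy--Schwarz, $s\,g(s)\le(1+s_a)G(s)$ and \eqref{eq-sec3-311} to get
\[
\int_{B_r}\frac{g(s)}{s}\,|\nabla w-\nabla v|^2\,dx\le C\,\omega(r)^2\int_{B_r}G(|\nabla w|)\,dx .
\]
Next, apply Cauchy--Schwarz again together with Jensen for the quasi-concave map $\sigma\mapsto\sigma/g(\sigma)$ (it is increasing and $1/g$ is decreasing). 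Add the bound $\fint_{B_r}|\nabla v|\,dx\le C\,G^{-1}(\fint_{B_r}G(|\nabla w|)\,dx)$ and the inequality $G(m)\le m\,g(m)$. This yields $\fint_{B_r}|\nabla w-\nabla v|\,dx\le C\,\omega(r)\,G^{-1}(\fint_{B_r}G(|\nabla w|)\,dx)$. Finally, use Lemma~\ref{lemma-RH} with $t=\frac{1-i_a}{1+s_a}$ and Jensen for $\tau\mapsto G(\tau^{1/(1-i_a)})^{t}$, which is quasi-concave by Lemma~\ref{lem:Gg}. This gives $\fint_{B_r}G(|\nabla w|)\,dx\le C\,G(M)$ with $M=(\fint_{B_{2r}}|\nabla w|^{1-i_a}dx)^{1/(1-i_a)}$; in the lower regime use $\gamma_0$ in place of $1-i_a$. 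The resulting bound is linear in $M$ with factor $\omega(r)\le\omega(r)^{\frac{2i_a}{(1+i_a)^2}}$, and your last step (replacing $\nabla w$ by $\nabla u$ via Lemma~\ref{lem-32}) then finishes the proof. Applying Jensen to $G$ directly in \eqref{eq-vw} also gives a linear bound, but only with the weaker factor $\omega^{\frac{2i_a}{(1+i_a)(1+s_a)}}$.
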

				\begin{proof}
					By Lemma~\ref{lem-dong21} and the definition of $\mathbf{q}_{x,\rho}(\cdot)$, we get
					\begin{equation}\label{ineq:phi}
						\begin{split}
							&\left(\fint_{B_{\varepsilon r}(x_0)}|\nabla u-\mathbf{q}_{x_0,\varepsilon r}(u)|^{\gamma_0}\,dx\right)^{\frac 1{\gamma_0}}
							\le \left(\fint_{B_{\varepsilon r}(x_0)}|\nabla u-\mathbf{q}_{x_0,\varepsilon r}(v)|^{\gamma_0}\,dx\right)^{\frac 1{\gamma_0}}\\
							&\quad \leq C\left(\fint_{B_{\varepsilon r}(x_0)}|\nabla v-\mathbf{q}_{x_0,\varepsilon r}(v)|^{\gamma_0}\,dx\right)^{\frac 1{\gamma_0}}+C\left(\fint_{B_{\varepsilon r}(x_0)}|\nabla u-\nabla v|^{\gamma_0}\,dx\right)^{\frac 1{\gamma_0}}\\
							&\quad \leq C\varepsilon^{\alpha}\left(\fint_{B_r(x_0)}|\nabla v-\mathbf{q}_{x_0,r}(v)|^{\gamma_0}\,dx\right)^{\frac 1{\gamma_0}}+ C\varepsilon^{-\frac n{\gamma_0}}\left(\fint_{B_r(x_0)}|\nabla u-\nabla v|^{\gamma_0}\,dx\right)^{\frac 1{\gamma_0}}\\
							&\quad\leq C\varepsilon^{\alpha}\left(\fint_{B_r(x_0)}|\nabla v-\mathbf{q}_{x_0,r}(u)|^{\gamma_0}\,dx\right)^{\frac 1{\gamma_0}}+ C\varepsilon^{-\frac n{\gamma_0}}\left(\fint_{B_r(x_0)}|\nabla u-\nabla v|^{\gamma_0}\,dx\right)^{\frac 1{\gamma_0}}\\
							&\quad \leq C\varepsilon^{\alpha}\left(\fint_{B_r(x_0)}|\nabla u-\mathbf{q}_{x_0,r}(u)|^{\gamma_0}\,dx\right)^{\frac 1{\gamma_0}}+ C\varepsilon^{-\frac n{\gamma_0}}\left(\fint_{B_r(x_0)}|\nabla u-\nabla v|^{\gamma_0}\,dx\right)^{\frac 1{\gamma_0}}.
						\end{split}
					\end{equation}
					In addition, using~\eqref{eq-0} and~\eqref{eq-vw}, we deduce 
					\begin{equation}\label{eq-321}
						\begin{aligned}
							\left(\fint_{B_{r}(x_0)} |\nabla w-\nabla v|^{\gamma_0}\,dx\right)^{\frac{1}{\gamma_0}} &\leq \left(\fint_{B_r(x_0)}|\nabla w-\nabla v|^{1+i_a}\,dx\right)^{\frac{1}{1+i_a}} \\
							&\leq \left( \fint_{B_r(x_0)}G(|\nabla w-\nabla v|)\,dx\right)^{\frac{1}{1+i_a}}\\
							&\leq C\omega^{\frac{2i_a}{(1+i_a)^2}}(r)\left(\fint_{B_{r}(x_0)}G(|\nabla w|)\,dx\right)^{\frac{1}{1+i_a}}.
						\end{aligned}
					\end{equation}
					If $\gamma_0 \le s_a<1$, then it follows from \eqref{eq-RH} that
					\begin{equation}\label{eq-gamma-sa}
						\fint_{B_{r}(x_0)}G(|\nabla w|)\,dx\leq 
						\left(\fint_{B_{2r}(x_0)} G^{\frac{\gamma_0}{s_a}}(|\nabla w|)\,dx\right)^{\frac{s_a}{\gamma_0}} 
						\le
						C \left(\fint_{B_{2r}(x_0)} (|\nabla w|+1)^{\gamma_0}\,dx\right)^{\frac{1}{\gamma_0}}.
					\end{equation}
					If $s_a \le \gamma_0< 1$, the above inequality is obvious.
					Consequently, ~\eqref{eq-gamma-sa} holds for all $\gamma_0\in(0,1)$. Thus, from~\eqref{eq-321} and ~\eqref{eq-gamma-sa},  we obtain 
					\begin{equation}\label{eq-es}
						\left(\fint_{B_{r}(x_0)} |\nabla w-\nabla v|^{\gamma_0}\,dx\right)^{\frac{1}{\gamma_0}} \leq C\omega^{\frac{2i_a}{(1+i_a)^2}}(r)\left(\fint_{B_{2r}(x_0)} (|\nabla w|+1)^{\gamma_0}\,dx\right)^{\frac{1}{\gamma_0}}.
					\end{equation}
				 Combining (\ref{eq-es}) and  $\omega(r)\leq1$ yields
					\begin{equation}\label{ineq:u-v}
						\begin{aligned}
							&\left(	\fint_{B_r(x_0)}|\nabla u-\nabla v|^{\gamma_0}\,dx\right)^{\frac{1}{\gamma_0}}\\
							&\quad\leq C\left(\fint_{B_r(x_0)}|\nabla u-\nabla w|^{\gamma_0}\,dx\right)^{\frac{1}{\gamma_0}}+C\left(\fint_{B_r(x_0)}|\nabla w-\nabla v|^{\gamma_0}\,dx\right)^{\frac{1}{\gamma_0}}\\
							&\quad \leq C\left(\fint_{B_{2r}(x_0)}|\nabla u-\nabla w|^{\gamma_0}\,dx\right)^{\frac{1}{\gamma_0}}+ 
							C\omega^{\frac{2i_a}{(1+i_a)^2}}(r)\left(\fint_{B_{2r}(x_0)} (|\nabla w|+1)^{\gamma_0}\,dx\right)^{\frac{1}{\gamma_0}}	\\
							&\quad \leq C\left(\fint_{B_{2r}(x_0)}|\nabla u-\nabla w|^{\gamma_0}\,dx\right)^{\frac{1}{\gamma_0}}+ 
							C\omega^{\frac{2i_a}{(1+i_a)^2}}(r)\left(\fint_{B_{2r}(x_0)} (|\nabla u|+1)^{\gamma_0}\,dx\right)^{\frac{1}{\gamma_0}}.
						\end{aligned}
					\end{equation}
			It follows 	from \eqref{ineq:phi} and \eqref{ineq:u-v} that 
					\begin{equation}\label{ineq:phi2}
						\begin{aligned}
							\phi(x_0,\varepsilon r)&\leq C\varepsilon^{\alpha}\phi(x_0,r)+C_\varepsilon \left(\fint_{B_{2r}(x_0)}|\nabla u-\nabla w|^{\gamma_0}\right)^{1/\gamma_0}\\
							&\quad +C_\varepsilon\omega^{\frac{2i_a}{(1+i_a)^2}}(r)\left(\fint_{B_{2r}(x_0)} (|\nabla u|+1)^{\gamma_0}\,dx\right)^{\frac{1}{\gamma_0}}.
						\end{aligned}
					\end{equation}
					Then we can apply Lemma \ref{lem-32} to bound the second term on the right-hand side of (\ref{ineq:phi2}) to conclude the proof.
				\end{proof}

				Now we are ready to prove Theorem \ref{thm:int}.
				\begin{proof}[Proof of Theorem \ref{thm:int}]
					Let $x_0$ be a Lebesgue point of $\nabla u$ and assume that $B_{2R}(x_0)\Subset\Omega$.
					
					Since $i_a\in\big(\frac{n-1}{2n-1},1\big)$, we set $\gamma_0:=1-i_a\in(0,\frac{n}{2n-1})$ and apply Lemma~\ref{lem-32} with this choice. 
					Fix $\varepsilon\in(0,1/4)$ sufficiently small such that $C\varepsilon^\alpha\le\frac14$, where $C$ is the constant in \eqref{ineq:prop}.

					For each integer $j\ge0$, set
					\[
					r_j:=\varepsilon^jR, \qquad B^j:=B_{2r_j}(x_0),
					\]
					and introduce the quantities
					\[
					T_j:=\left(\fint_{B^j}(|\nabla u|+1)^{\gamma_0}\,dx\right)^{1/\gamma_0},
					\quad
					\phi_j:=\phi(x_0,r_j),
					\quad
					\mathbf q_j:=\mathbf q_{x_0,r_j}.
					\]
					Applying \eqref{ineq:prop} with $r_j$ yields
					\begin{align*}
						\phi_{j+1}
						\le &\;\frac14\,\phi_j
						+ C\,g^{-1}\!\left(\frac{|\mu|(B^j)}{r_j^{\,n-1}}\right)
						+ C\!\left(g^{-1}\!\left(\frac{|\mu|(B^j)}{r_j^{\,n-1}}\right)\right)^{i_a} T_j^{1-i_a} \\
						&\; + C(\omega(r_j))^{\frac{2i_a}{(1+i_a)^2}} T_j.
					\end{align*}
					
					Let $j_0$ and $m$ be positive integers  to be specified later such that $ j_0\le m$. Summing the above inequality
					from $j=j_0$ to $j=m$ yields
					\begin{equation}\label{sum:int-re-full}
						\begin{aligned}
							\sum_{j=j_0}^{m+1}\phi_j
							\le &\; C\phi_{j_0}
							+ C\sum_{j=j_0}^{m} g^{-1}\!\left(\frac{|\mu|(B^j)}{r_j^{\,n-1}}\right) \\
							&\; + C\sum_{j=j_0}^{m}
							\left(g^{-1}\!\left(\frac{|\mu|(B^j)}{r_j^{\,n-1}}\right)\right)^{i_a} T_j^{1-i_a}
							+ C\sum_{j=j_0}^{m}(\omega(r_j))^{\frac{2i_a}{(1+i_a)^2}}T_j.
						\end{aligned}
					\end{equation}
					Note that, for every $x\in B_{r_{j+1}}(x_0)$, we have
					\[
					|\mathbf q_{j+1}-\mathbf q_j|^{\gamma_0}
					\le |\mathbf q_{j+1}-\nabla u(x)|^{\gamma_0}
					+|\nabla u(x)-\mathbf q_j|^{\gamma_0}.
					\]
					Integrating over $B_{r_{j+1}}(x_0)$ and taking the $\gamma_0$-th root, we obtain
					\[
					|\mathbf q_{j+1}-\mathbf q_j|
					\le C(\phi_{j+1}+\phi_j).
					\]
					Iterating this inequality from $j=j_0$ to $j=m$ gives
					\[
					|\mathbf q_{m+1}-\mathbf q_{j_0}|
					\le C\sum_{j=j_0}^{m+1}\phi_j.
					\]
					Combining this estimate with \eqref{sum:int-re-full}, we infer that 
					\begin{equation}\label{ineq:q1-re-full}
						\begin{aligned}
							|\mathbf q_{m+1}|+\sum_{j=j_0}^{m+1}\phi_j
							\le &\; C\phi_{j_0}+|\mathbf q_{j_0}| + C\sum_{j=j_0}^{m} g^{-1}\!\left(\frac{|\mu|(B^j)}{r_j^{\,n-1}}\right) \\
							&\; + C\sum_{j=j_0}^{m}
							\left(g^{-1}\!\left(\frac{|\mu|(B^j)}{r_j^{\,n-1}}\right)\right)^{i_a} T_j^{1-i_a}
							+ C\sum_{j=j_0}^{m}(\omega(r_j))^{\frac{2i_a}{(1+i_a)^2}}T_j.
						\end{aligned}
					\end{equation}
					By the definition of $\phi_{j_0}$, it follows that
					$$\phi_{j_0}
					\le C\left(\fint_{B^{j_0}}|\nabla u|^{\gamma_0}\,dx\right)^{1/\gamma_0}
					\le C T_{j_0}.$$
					Moreover, for every $x\in B_{r_{j_0}}(x_0)$,
					\[
					|\mathbf q_{j_0}|^{\gamma_0}
					\le |\nabla u(x)-\mathbf q_{j_0}|^{\gamma_0}
					+|\nabla u(x)|^{\gamma_0}.
					\]
					Averaging over $B_{r_{j_0}}(x_0)$ and taking the $\gamma_0$-th root yields
					\[
					|\mathbf q_{j_0}|
					\le C\phi_{j_0}
					+ C\left(\fint_{B^{j_0}}|\nabla u|^{\gamma_0}\,dx\right)^{1/\gamma_0}
					\le C T_{j_0}.
					\]
					Substituting these bounds into \eqref{ineq:q1-re-full}, we arrive at
					\begin{equation}\label{ineq:q-re-full}
						\begin{aligned}
							|\mathbf q_{m+1}|+\sum_{j=j_0}^{m+1}\phi_j
							\le &\; CT_{j_0}
							+ C\sum_{j=j_0}^{m} g^{-1}\!\left(\frac{|\mu|(B^j)}{r_j^{\,n-1}}\right) \\
							&\; + C\sum_{j=j_0}^{m}
							\left(g^{-1}\!\left(\frac{|\mu|(B^j)}{r_j^{\,n-1}}\right)\right)^{i_a} T_j^{1-i_a}
							+ C\sum_{j=j_0}^{m}(\omega(r_j))^{\frac{2i_a}{(1+i_a)^2}}T_j .
						\end{aligned}
					\end{equation}
					By the Dini condition \eqref{eq:dini} and the fact that $r_j\to0$ as
					$j\to\infty$, there exists $j_0$ sufficiently large such that
					\[
					\sum_{j=j_0}^\infty(\omega(r_j))^{\frac{2i_a}{(1+i_a)^2}}
					\]
					is arbitrarily small.
					Moreover, by the monotonicity of $g^{-1}$ and a Riemann sum argument, together with the comparison principle for Riemann integrals, we obtain
					\begin{equation}
						\begin{split}
							\mathcal{W}_{i_a,g}^{\mu}(x,2R)
							&= \int_0^{2R}
							\left(g^{-1}\!\left(\frac{|\mu|(B(x,\rho))}{\rho^{n-1}}\right)\right)^{i_a}
							\frac{d\rho}{\rho} \\
							&= \sum_{j=0}^\infty
							\int_{2r_{j+1}}^{2r_j}
							\left(g^{-1}\!\left(\frac{|\mu|(B(x,\rho))}{\rho^{n-1}}\right)\right)^{i_a}
							\frac{d\rho}{\rho} \\
							&\ge \sum_{j=0}^\infty
							\left(g^{-1}\!\left(\frac{|\mu|(B^{j+1})}{(2r_j)^{n-1}}\right)\right)^{i_a}
							\ln\frac{1}{\varepsilon} \\
							&\ge C \varepsilon^{n-1}\ln\frac{1}{\varepsilon}
							\sum_{j=1}^\infty
							\left(g^{-1}\!\left(\frac{|\mu|(B^j)}{r_j^{n-1}}\right)\right)^{i_a}.
						\end{split}
					\end{equation}
					In addition, we deduce that 
					\begin{align*}
						\sum_{j=j_0}^{m}
						\left(g^{-1}\!\left(\frac{|\mu|(B^j)}{r_j^{\,n-1}}\right)\right)^{i_a}
						\le
						C\int_0^{2r_{j_0}}
						\left(g^{-1}\!\left(\frac{|\mu|(B_\rho(x_0))}{\rho^{n-1}}\right)\right)^{i_a}
						\frac{d\rho}{\rho}.
					\end{align*}
					and 
					\begin{equation}\label{eqj0}
						\begin{split}			\sum_{j=j_0}^{m}
							g^{-1}\!\left(\frac{|\mu|(B^j)}{r_j^{\,n-1}}\right)& =\sum_{j=j_0}^{m}
							\left(g^{-1}\!\left(\frac{|\mu|(B^j)}{r_j^{\,n-1}}\right)\right)^{i_a\cdot \frac{1}{i_a}}\\
							&\le \left(  \sum_{j=j_0}^{m}
							\left(g^{-1}\!\left(\frac{|\mu|(B^j)}{r_j^{\,n-1}}\right)\right)^{i_a}    \right)^{\frac{1}{i_a}}\\
							&		\le
							C\left(\int_0^{2r_{j_0}}
							\left(g^{-1}\!\left(\frac{|\mu|(B_\rho(x_0))}{\rho^{n-1}}\right)\right)^{i_a}
							\frac{d\rho}{\rho}\right)^{\frac{1}{i_a}}.
					\end{split}		\end{equation}
					
					To prove \eqref{ineq:int} at $x=x_0$, it is sufficient to show that 
					\begin{equation}\label{ineq:result}
						|\nabla u(x_0)|\leq CT_{j_0}+ C\left(\int_0^{2r_{j_0}}\left(g^{-1}\left(\frac{|\mu|(B_\rho(x_0))}{\rho^{n-1}}\right)\right)^{i_a} \frac{d\rho}{\rho}\right)^{\frac{1}{i_a}}. 
					\end{equation}

					We distinguish the following cases.

					{\em Case 1.} If $|\nabla u(x_0)|\leq T_{j_0}$, then (\ref{ineq:result}) easily follows.

					{\em Case 2.} If $ T_j< |\nabla u(x_0)|, \ \forall j_0\leq j\leq j_1$, and $|\nabla u(x_0)|\leq T_{j_1+1}$, then since $\gamma_0=1-i_a<1$, we have
					\begin{equation}\label{ineq:grad}
						\begin{aligned} 
							|\nabla u(x_0)|&\leq\left(\fint_{B^{j_1+1}}(|\nabla u|+1)^{\gamma_0}\,dx\right)^{1/\gamma_0}\\ 
							&\leq 2^{\frac{1}{\gamma_0}}\left(\fint_{B^{j_1+1}}|\nabla u|^{\gamma_0}\,dx\right)^{1/\gamma_0}+2^{\frac{1}{\gamma_0}}\\ &\leq 2^{\frac{1}{\gamma_0}}(2\varepsilon)^{-\frac{n}{\gamma_0}} \left(\fint_{B_{r_{j_1}}(x_0)}|\nabla u|^{\gamma_0}\,dx\right)^{1/\gamma_0}+2^{\frac{1}{\gamma_0}}\\ 
							&\leq 4^\frac{1}{\gamma_0}(2\varepsilon)^{-\frac{n}{\gamma_0}}(\phi_{j_1}+|\mathbf{q}_{j_1}|) +2^{\frac{1}{\gamma_0}}, 
					\end{aligned} \end{equation}
					where the last inequality follows from the definitions of $\phi_{j_1}$ and $\mathbf{q}_{j_1}$.
					
					Now applying \eqref{ineq:q-re-full} with $m=j_1-1$ and using \eqref{eqj0}, from \eqref{ineq:grad} we get
					\begin{align*} 
						|\nabla u(x_0)|\leq&CT_{j_0} +C\left(\int_0^{2r_{j_0}}\left(g^{-1}\left(\frac{|\mu|(B_\rho(x_0))}{\rho^{n-1}}\right)\right)^{i_a} \frac{d\rho}{\rho}\right)^\frac{1}{i_a}\\ 
						&+C\int_0^{2r_{j_0}} \left(g^{-1}\left(\frac{|\mu|(B_\rho(x_0))}{\rho^{n-1}} \right)\right)^{i_a}\frac{d\rho}{\rho}\cdot|\nabla u(x_0)|^{1-i_a}\\ &+C\sum_{j=j_0}^m(\omega(r_j))^{\frac{2i_a}{(1+i_a)^2}}|\nabla u(x_0)|+2^\frac{1}{\gamma_0}.
					\end{align*} 
					Hence using the Dini-type condition \eqref{eq:dini} and Young's inequality, we find 
					\begin{align*} |\nabla u(x_0)|\leq CT_{j_0}+C \left( \int_0^{2r_{j_0}} \left(g^{-1}\left(\frac{|\mu|(B_\rho(x_0))}{\rho^{n-1}} \right)\right)^{i_a}\frac{d\rho}{\rho}\right)^{\frac{1}{i_a}} +\frac{1}{5}|\nabla u(x_0)|. 
					\end{align*} 
					This implies (\ref{ineq:result}) as desired.
					
					{\em Case 3.} If $T_j<|\nabla u(x_0)|$ for any $j\geq j_0$, then from \eqref{ineq:q-re-full} and \eqref{eqj0} and Young's inequality, we have for any $m>j_0$,
					\begin{align*}
						|\mathbf{q}_{m+1}|\leq& CT_{j_0}+C\left( \int_0^{2r_{j_0}} \left(g^{-1}\left(\frac{|\mu|(B_\rho(x_0))}{\rho^{n-1}}\right)\right)^{i_a}\frac{d\rho}{\rho}\right)^{\frac{1}{i_a}} \\ 
						&+C \int_0^{2r_{j_0}}\left(g^{-1}\left(\frac{|\mu|(B_\rho(x_0))}{\rho^{n-1}} \right)\right)^{i_a}\frac{d\rho}{\rho}\cdot|\nabla u(x_0)|^{1-i_a}+C\sum_{j=j_0}^m(\omega(r_j))^{\frac{2i_a}{i_a+1}}|\nabla u(x_0)|\\ 
						\leq& CT_{j_0}+C\left( \int_0^{2r_{j_0}} \left(g^{-1}\left(\frac{|\mu|(B_\rho(x_0))}{\rho^{n-1}} \right)\right)^{i_a}\frac{d\rho}{\rho}\right)^{\frac{1}{i_a}}+\frac{1}{5}|\nabla u(x_0)|.
					\end{align*} 
					Letting $m\to \infty$ and using \eqref{limit}, we get 
					\begin{align*}
						|\nabla u(x_0)|\leq& CT_{j_0}+C\left( \int_0^{2r_{j_0}} \left(g^{-1}\left(\frac{|\mu|(B_\rho(x_0))}{\rho^{n-1}} \right)\right)^{i_a}\frac{d\rho}{\rho}\right)^{\frac{1}{i_a}}+\frac{1}{5}|\nabla u(x_0)|.
					\end{align*}
					Then we deduce (\ref{ineq:result}). The proof is completed. 
				\end{proof}
				
				\section{Interior Lipschitz continuity of the gradient}\label{sec4}
				
				This section is devoted to the proof of the interior Lipschitz bound stated in Theorem~\ref{thm:int:lip}.
				Let $\alpha\in(0,1)$ be the exponent appearing in Lemma~\ref{lem-dong21} and fix any
				$\alpha_1\in(0,\alpha)$. Let $R\in(0,1]$ be such that $B_R(x_0)\Subset\Omega$.
				We choose a parameter
				\[
				\varepsilon=\varepsilon(n,i_a,s_a,\Lambda,\lambda,\gamma_0,\alpha,\alpha_1)\in(0,1)
				\]
				small enough so that
				\[
				C\,\varepsilon^{\alpha-\alpha_1}<1
				\qquad\text{and}\qquad
				\varepsilon^{\alpha_1}<\frac14,
				\]
				where $C$ denotes the constant appearing in \eqref{ineq:prop}.
				
				As a consequence of Proposition~\ref{prop1}, for every ball
				$B_{2r}(x)\Subset B_R(x_0)$ we have the decay inequality
				\begin{equation}\label{eq7.15-new}
					\begin{aligned}
						\phi(x,\varepsilon r)
						\leq\;& \varepsilon^{\alpha_1}\phi(x,r)
						+ C\, g^{-1}\!\left(\frac{|\mu|(B_{2r}(x))}{r^{\,n-1}}\right) \\
						& + C\!\left(g^{-1}\!\left(\frac{|\mu|(B_{2r}(x))}{r^{\,n-1}}\right)\right)^{i_a}
						\big(\|\nabla u\|_{L^\infty(B_{2r}(x))}+1\big)^{1-i_a} \\
						& + C\,\omega^{\frac{2i_a}{(1+i_a)^2}}(r)
						\big(\|\nabla u\|_{L^\infty(B_{2r}(x))}+1\big) .
					\end{aligned}
				\end{equation}
				
				For convenience, we introduce the auxiliary quantities
				\begin{equation}\label{def:aux-new}
					h(x,r)
					:= \left(g^{-1}\!\left(\frac{|\mu|(B_r(x))}{r^{\,n-1}}\right)\right)^{i_a},
					\qquad
					H(x,r) := h(x,r)^{1/i_a}.
				\end{equation}
				
				Iterating \eqref{eq7.15-new}, we infer that for every integer $j\ge1$,
				\begin{align*}
					\phi(x,\varepsilon^j r)
					\leq\;& \varepsilon^{\alpha_1 j}\phi(x,r)
					+ C\sum_{i=1}^j \varepsilon^{\alpha_1(i-1)} H(x,2\varepsilon^{j-i}r) \\
					& + C\sum_{i=1}^j \varepsilon^{\alpha_1(i-1)} h(x,2\varepsilon^{j-i}r)
					\big(\|\nabla u\|_{L^\infty(B_{2r}(x))}+1\big)^{1-i_a} \\
					& + C\sum_{i=1}^j \varepsilon^{\alpha_1(i-1)}
					\omega^{\frac{2i_a}{(1+i_a)^2}}(\varepsilon^{j-i}r)
					\big(\|\nabla u\|_{L^\infty(B_{2r}(x))}+1\big),
				\end{align*}
				whenever $r\in(0,R/4)$ and $B_{2r}(x)\Subset B_R(x_0)$.
				
				Collecting the above contributions, we obtain
				\begin{equation}\label{iter:int1-new}
					\begin{aligned}
						\phi(x,\varepsilon^j r)
						\leq\;& \varepsilon^{\alpha_1 j}\phi(x,r)
						+ C\,\widetilde H(x,2\varepsilon^j r)
						+ C\,\widetilde h(x,2\varepsilon^j r)
						\big(\|\nabla u\|_{L^\infty(B_{2r}(x))}+1\big)^{1-i_a} \\
						& + C\,\widetilde\omega^{\frac{2i_a}{(1+i_a)^2}}(\varepsilon^j r)
						\big(\|\nabla u\|_{L^\infty(B_{2r}(x))}+1\big),
					\end{aligned}
				\end{equation}
				where we defined 
				\begin{equation}\label{eq6.42-new}
					\begin{aligned}
						\widetilde H(x,t)
						&:= \sum_{i=1}^\infty \varepsilon^{\alpha_1 i}
						\Big( H(x,\varepsilon^{-i}t)\,[\varepsilon^{-i}t\le R/2]
						+ H(x,R/2)\,[\varepsilon^{-i}t>R/2] \Big),\\
						\widetilde h(x,t)
						&:= \sum_{i=1}^\infty \varepsilon^{\alpha_1 i}
						\Big( h(x,\varepsilon^{-i}t)\,[\varepsilon^{-i}t\le R/2]
						+ h(x,R/2)\,[\varepsilon^{-i}t>R/2] \Big),\\
						\widetilde\omega^{\frac{2i_a}{(1+i_a)^2}}(t)
						&:= \sum_{i=1}^\infty \varepsilon^{\alpha_1 i}
						\Big( \omega^{\frac{2i_a}{(1+i_a)^2}}(\varepsilon^{-i}t)\,[\varepsilon^{-i}t\le R/2]
						+ \omega^{\frac{2i_a}{(1+i_a)^2}}(R/2)\,[\varepsilon^{-i}t>R/2] \Big).
					\end{aligned}
				\end{equation}
				Here $[\,\cdot\,]$ denotes the Iverson bracket, that is, $[\,P\,]=1$ if $P$ is true and $[\,P\,]=0$ if $P$ is false.
				
				Before turning to the proof of Theorem~\ref{thm:int:lip}, we first derive an auxiliary
				iteration result.
				
				\begin{lemma}\label{lem:iter1}
					Let $B_{2r}(x)\Subset B_R(x_0)\subset\Omega$ with $r\le R/4$.
					There exists a constant $C$, depending only on
					$\varepsilon$, $n$, $i_a$, $s_a$, $\Lambda$, $\lambda$, $\gamma_0$ and $\alpha_1$,
					such that for every $\rho\in(0,r]$, we have 
					\begin{equation}\label{iter:int2}
						\begin{aligned}
							\phi(x,\rho)
							\leq\;& C\left(\frac{\rho}{r}\right)^{\alpha_1}\phi(x,r)
							+ C\,\widetilde H(x,2\rho) \\
							& + C\,\widetilde h(x,2\rho)
							\big(\|\nabla u\|_{L^\infty(B_{2r}(x))}+1\big)^{1-i_a} \\
							& + C\,\widetilde\omega^{\frac{2i_a}{(1+i_a)^2}}(\rho)
							\big(\|\nabla u\|_{L^\infty(B_{2r}(x))}+1\big),
						\end{aligned}
					\end{equation}
					and, 
					\begin{equation}\label{sum1:phi}
						\begin{aligned}
							\sum_{j=0}^\infty \phi(x,\varepsilon^j\rho)
							\leq\;& C\left(\frac{\rho}{r}\right)^{\alpha_1}\phi(x,r)
							+ C\int_0^{\rho}\frac{\widetilde H(x,t)}{t}\,dt \\
							& + C\big(\|\nabla u\|_{L^\infty(B_{2r}(x))}+1\big)^{1-i_a}
							\int_0^{\rho}\frac{\widetilde h(x,t)}{t}\,dt \\
							& + C\big(\|\nabla u\|_{L^\infty(B_{2r}(x))}+1\big)
							\int_0^{\rho}\frac{\widetilde\omega^{\frac{2i_a}{(1+i_a)^2}}(t)}{t}\,dt.
						\end{aligned}
					\end{equation}
				\end{lemma}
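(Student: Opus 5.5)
The plan is to derive \eqref{iter:int2} from the discrete iteration \eqref{iter:int1-new} by interpolating between the dyadic-type scales $\varepsilon^j r$, and then to obtain \eqref{sum1:phi} by summing \eqref{iter:int2} along $\rho_j=\varepsilon^j\rho$ and comparing the sums with integrals.

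\textbf{Step 1: the estimate \eqref{iter:int2}.} Given $\rho\in(0,r]$, choose the integer $j\ge0$ with $\varepsilon^{j+1}r<\rho\le\varepsilon^j r$ and set $r'=\rho\,\varepsilon^{-j}\in(\varepsilon r,r]$. Then $B_{2r'}(x)\subset B_{2r}(x)$, so \eqref{iter:int1-new} applies with $r$ replaced by $r'$ and gives
\[
\phi(x,\rho)=\phi(x,\varepsilon^j r')\le \varepsilon^{\alpha_1 j}\phi(x,r')+C\widetilde H(x,2\rho)+C\widetilde h(x,2\rho)\big(\|\nabla u\|_{L^\infty(B_{2r}(x))}+1\big)^{1-i_a}+C\widetilde\omega^{\frac{2i_a}{(1+i_a)^2}}(\rho)\big(\|\nabla u\|_{L^\infty(B_{2r}(x))}+1\big).
\]
Here the sup norm over $B_{2r'}$ is bounded by the one over $B_{2r}$. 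It remains to handle the first term.
\begin{itemize}
\item Since $r'\in(\varepsilon r,r]$ and $\gamma_0<1$, we can take $\mathbf q=\mathbf q_{x,r}$ in the definition of $\phi(x,r')$ and enlarge the ball, which gives $\phi(x,r')\le \varepsilon^{-n/\gamma_0}\phi(x,r)$.
\item We also have $\varepsilon^{\alpha_1 j}\le \varepsilon^{-\alpha_1}(\rho/r)^{\alpha_1}$.
\end{itemize}
Together these yield the $C(\rho/r)^{\alpha_1}\phi(x,r)$ term, with $C$ depending on $\varepsilon$.

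\textbf{Step 2: the estimate \eqref{sum1:phi}.} Apply \eqref{iter:int2} with $\rho$ replaced by $\varepsilon^j\rho$ and sum over $j\ge0$. The leading terms contribute
\[
\sum_j(\varepsilon^j\rho/r)^{\alpha_1}\phi(x,r)=(1-\varepsilon^{\alpha_1})^{-1}(\rho/r)^{\alpha_1}\phi(x,r).
\]
For the potential terms I would compare each sum with an integral, e.g.
\[
\sum_{j\ge0}\widetilde H(x,2\varepsilon^j\rho)\le C\int_0^{\rho}\frac{\widetilde H(x,t)}{t}\,dt,
\]
and similarly for $\widetilde h$ and $\widetilde\omega$. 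This is a Riemann-sum comparison on the intervals $[\varepsilon^{j+1}\rho,\varepsilon^j\rho]$, each of logarithmic length $\ln(1/\varepsilon)$. It requires that $\widetilde H(x,t)$ be comparable, up to constants depending on $\varepsilon$, to its values at nearby scales $t'\in[\varepsilon t,2t]$.

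\textbf{Main obstacle.} The delicate step is this almost-monotonicity (doubling) of $\widetilde H$, $\widetilde h$ and $\widetilde\omega$. For $t\le t'\le Kt$ one has $|\mu|(B_{t}(x))\le|\mu|(B_{t'}(x))$. Hence, by monotonicity of $g^{-1}$ and Lemma~\ref{lem:Gg},
\[
H(x,t)=g^{-1}\Big(\frac{|\mu|(B_t)}{t^{n-1}}\Big)\le g^{-1}\Big(K^{n-1}\frac{|\mu|(B_{t'})}{t'^{\,n-1}}\Big)\le C(K)\,H(x,t').
\]
The same bound holds for $h=H^{i_a}$, and $\omega$ is nondecreasing. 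Applying this termwise in the series defining $\widetilde H$, using $\varepsilon^{-i}t\le\varepsilon^{-i}t'$, gives $\widetilde H(x,t)\le C\,\widetilde H(x,t')$. For this I would also check the Iverson-bracket cases, where the truncation level $R/2$ is handled the same way because $H(x,R/2)$ dominates by doubling. The reverse direction, shifting the index $i$ by one at the cost of a factor $\varepsilon^{-\alpha_1}$, handles $t'\in[\varepsilon t,t]$. With these comparisons,
\[
\widetilde H(x,2\varepsilon^j\rho)\le \frac{C}{\ln(1/\varepsilon)}\int_{\varepsilon^{j+1}\rho}^{\varepsilon^j\rho}\frac{\widetilde H(x,t)}{t}\,dt,
\]
and summing over $j$ gives \eqref{sum1:phi}.
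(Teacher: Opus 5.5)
Your proposal is correct and follows the paper's proof essentially step by step. Both arguments choose $j$ with $\varepsilon^{j+1}<\rho/r\le\varepsilon^j$, apply \eqref{iter:int1-new} at scale $\varepsilon^{-j}\rho$, then sum along $\varepsilon^j\rho$ and compare with integrals using the comparability of $\widetilde H,\widetilde h,\widetilde\omega$ across scales. Your write-up is slightly more complete than the paper's: it makes explicit the bound $\phi(x,\varepsilon^{-j}\rho)\le\varepsilon^{-n/\gamma_0}\phi(x,r)$ and the doubling/index-shift argument behind that comparability, both of which the paper leaves implicit.
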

				
				\begin{proof}
					Fix $\rho\in(0,r]$ and choose an integer $j\ge0$ such that
					\[
					\varepsilon^{j+1}<\frac{\rho}{r}\le\varepsilon^j .
					\]
					Applying \eqref{iter:int1-new} with $\varepsilon^{-j}\rho$ in place of $r$, we obtain
					\begin{equation}\label{eq-a}
						\begin{split}
						\phi(x,\rho)
						\leq\;& \varepsilon^{\alpha_1 j}\phi(x,\varepsilon^{-j}\rho)
						+ C\,\widetilde H(x,2\rho)
						+ C\,\widetilde h(x,2\rho)
						\big(\|\nabla u\|_{L^\infty(B_{2\varepsilon^{-j}\rho}(x))}+1\big)^{1-i_a} \\
						& + C\,\widetilde\omega^{\frac{2i_a}{(1+i_a)^2}}(\rho)
						\big(\|\nabla u\|_{L^\infty(B_{2\varepsilon^{-j}\rho}(x))}+1\big).
						\end{split}
					\end{equation}
					Since $\varepsilon^{-j}\rho\le r$, it follows that
					\[
					\|\nabla u\|_{L^\infty(B_{2\varepsilon^{-j}\rho}(x))}
					\le \|\nabla u\|_{L^\infty(B_{2r}(x))}.
					\]
			 By the choice of $j$,
					$\varepsilon^{\alpha_1 j}\le \varepsilon^{-\alpha_1}(\rho/r)^{\alpha_1}$.
					Substituting these bounds into \eqref{eq-a} yields
					\eqref{iter:int2}.
					
					Applying \eqref{iter:int2} with $\varepsilon^j\rho$ in place of $\rho$
					and summing over $j\ge0$, we obtain
					\begin{align*}
						\sum_{j=0}^\infty \phi(x,\varepsilon^j\rho)
						\leq\;& C\left(\frac{\rho}{r}\right)^{\alpha_1}\phi(x,r)
						+ C\sum_{j=0}^\infty \widetilde H(x,2\varepsilon^j\rho) \\
						& + C\big(\|\nabla u\|_{L^\infty(B_{2r}(x))}+1\big)^{1-i_a}
						\sum_{j=0}^\infty \widetilde h(x,2\varepsilon^j\rho) \\
						& + C\big(\|\nabla u\|_{L^\infty(B_{2r}(x))}+1\big)
						\sum_{j=0}^\infty \widetilde\omega^{\frac{2i_a}{(1+i_a)^2}}(2\varepsilon^j\rho).
					\end{align*}
					Using the definition of $\widetilde H$, $\widetilde h$ and $\widetilde\omega$, one easily checks that there exist constants $c_1,c_2>0$, depending only on $\varepsilon$, $n$, $i_a$, $s_a$ and $\alpha_1$, such that \[ c_1 f(t)\le f(s)\le c_2 f(t) \qquad\text{for all } 0<\varepsilon t\le s\le t,\] for any $f\in\{\widetilde H(x,\cdot),\widetilde h(x,\cdot),\widetilde\omega(\cdot)\}$. Consequently, by the standard comparison between geometric sums and Riemann integrals, we obtain \eqref{sum1:phi}.
				\end{proof}
				
				Recall the definition of $\mathbf{q}_{x,\rho}$ from Section~\ref{sec3}. 
				For any $z\in B_{\varepsilon\rho}(x)$, we have
				\[
				|\mathbf{q}_{x,\varepsilon \rho}-\mathbf{q}_{x,\rho}|^{\gamma_0}
				\leq |\nabla u(z)-\mathbf{q}_{x,\rho}|^{\gamma_0}
				+|\nabla u(z)-\mathbf{q}_{x,\varepsilon \rho}|^{\gamma_0}.
				\]
				Averaging over $B_{\varepsilon\rho}(x)$ and taking the $\gamma_0$-th root yield
				\[
				|\mathbf{q}_{x,\varepsilon \rho}-\mathbf{q}_{x,\rho}|
				\leq C\phi(x,\varepsilon \rho)+C\phi(x,\rho).
				\]
				Iterating this estimate gives
				\[
				|\mathbf{q}_{x,\varepsilon^j\rho}-\mathbf{q}_{x,\rho}|
				\leq C\sum_{i=0}^j \phi(x,\varepsilon^i\rho).
				\]
				Therefore, by \eqref{limit},
				\begin{equation}\label{ineq:diff}
					|\nabla u(x)-\mathbf{q}_{x,\rho}|
					\leq C\sum_{j=0}^\infty \phi(x,\varepsilon^j\rho),
				\end{equation}
				for every Lebesgue point $x\in\Omega$ of $\nabla u$.
				
				Now we are ready to prove the interior Lipschitz estimate.
				\begin{proof}[Proof of Theorem~\ref{thm:int:lip}]
					Fix a point $x_0\in\Omega$ and assume that $B_R(x_0)\Subset\Omega$ for some $R\in(0,1]$. 
					Throughout the proof we suppose that the truncated Wolff potential is locally bounded, namely
					\begin{equation}\label{eq:assump-wolff}
						\big\|\mathbf{W}_{i_a,g}^{R}(|\mu|)\big\|_{L^\infty(B_R(x_0))}<\infty .
					\end{equation}
					The argument is divided into two steps. We first establish the desired estimate under a smoothness assumption on $u$, and then remove this restriction by an approximation procedure.
					
					\medskip
					\noindent
					\textit{Step 1. Assume that $u\in C^1(\overline{B_R(x_0)})$}.
					Let $x\in B_R(x_0)$ and $\rho\in(0,R/4]$ be such that $B_{2\rho}(x)\Subset B_R(x_0)$.
					Combining the excess-decay estimate \eqref{sum1:phi} with \eqref{ineq:diff}, we infer that
					\begin{align*}
						|\nabla u(x)-\mathbf q_{x,\rho}|
						\leq\;& C\phi(x,\rho)
						+ C\int_0^\rho \frac{\widetilde H(x,t)}{t}\,dt \\
						&+ C\big(\|\nabla u\|_{L^\infty(B_{2\rho}(x))}+1\big)^{1-i_a}
						\int_0^\rho \frac{\widetilde h(x,t)}{t}\,dt  \\
						&+ C\big(\|\nabla u\|_{L^\infty(B_{2\rho}(x))}+1\big)
						\int_0^\rho \frac{\widetilde\omega^{\frac{2i_a}{(1+i_a)^2}}(t)}{t}\,dt .
					\end{align*}
				In addition, by the definition of $\mathbf q_{x,\rho}$, we have
					\[
					|\mathbf q_{x,\rho}|
					\le C\phi(x,\rho)+C\rho^{-n/\gamma_0}\|\nabla u\|_{L^{\gamma_0}(B_\rho(x))}
					\le C\rho^{-n/\gamma_0}\|\nabla u\|_{L^{\gamma_0}(B_\rho(x))}.
					\]
					Combining the previous two inequalities yields
					\begin{align}\label{eq:local-grad-est}
						|\nabla u(x)|
						\leq\;& C\rho^{-n/\gamma_0}\|\nabla u\|_{L^{\gamma_0}(B_\rho(x))}
						+ C\int_0^\rho \frac{\widetilde H(x,t)}{t}\,dt \nonumber\\
						&+ C\big(\|\nabla u\|_{L^\infty(B_{2\rho}(x))}+1\big)^{1-i_a}
						\int_0^\rho \frac{\widetilde h(x,t)}{t}\,dt  \\
						&+ C\big(\|\nabla u\|_{L^\infty(B_{2\rho}(x))}+1\big)
						\int_0^\rho \frac{\widetilde\omega^{\frac{2i_a}{(1+i_a)^2}}(t)}{t}\,dt .\nonumber
					\end{align}
					
					Since $\widetilde{\omega}$ satisfies the Dini-type condition \eqref{eq:dini},
					we may choose $\rho_0\in(0,R/4]$ sufficiently small such that
					\[
					C\int_0^{\rho_0}\frac{\widetilde{\omega}^{\frac{2i_a}{(1+i_a)^2}}(t)}{t}\,dt
					\le 3^{-1-n/\gamma_0}.
					\]
					Thus, for any $\rho\le\rho_0$,  by Young's inequality we  obtain
					\begin{align}\label{eq:local-iter}
						|\nabla u(x)|
						\leq\;& C\rho^{-n/\gamma_0}\|\nabla u\|_{L^{\gamma_0}(B_\rho(x))}
						+ C\int_0^\rho \frac{\widetilde H(x,t)}{t}\,dt \nonumber\\
						&+ C\Big(\int_0^\rho \frac{\widetilde h(x,t)}{t}\,dt\Big)^{\frac1{i_a}}
						+ 3^{-n/\gamma_0}\big(\|\nabla u\|_{L^\infty(B_{2\rho}(x))}+1\big).
					\end{align}					
		Define $\rho_k=(1-2^{-k})R$ for $k\ge1$.  Since $\rho_{k+1}-\rho_k=2^{-k-1}R$, we have $B_{2\rho}(x)\subset B_{\rho_{k+1}}(x_0)$ for any $x\in  B_{\rho_{k}}(x_0)$ and $\rho=2^{-k-2}R$. We take $k_0$ sufficiently large such that $2^{-k_0-2}\leq \rho_0$. Then by \eqref{eq:local-iter} with $\rho=2^{-k-2}R$, we have for any $k\geq k_0$ that
					\begin{align*}
						\|\nabla u\|_{L^\infty(B_{\rho_k}(x_0))}+1
						\le\;& 3^{-n/\gamma_0}\big(\|\nabla u\|_{L^\infty(B_{\rho_{k+1}}(x_0))}+1\big) \\
						&+ C\Big(\frac{2^{k}}{R}\Big)^{n/\gamma_0}
						\|\nabla u\|_{L^{\gamma_0}(B_{\rho_{k+1}}(x_0))} \\
						&+ C\sup_{x\in B_{\rho_k}(x_0)}
						\int_0^{R/2}\frac{\widetilde H(x,t)}{t}\,dt \\
						&+ C\sup_{x\in B_{\rho_k}(x_0)}
						\Big(\int_0^{R/2}\frac{\widetilde h(x,t)}{t}\,dt\Big)^{\frac1{i_a}} .
					\end{align*}
					Multiplying the inequality by $3^{-nk/\gamma_0}$
					and summing over $k\ge k_0$, we arrive at
					\begin{align*} 
						&\sum_{k=k_0}^\infty 3^{-nk/{\gamma_0}}( \|\nabla u\|_{L^\infty(B_{\rho_k}(x_0))}+1)\\
						&\leq \sum_{k=k_0+1}^\infty 3^{-nk/{\gamma_0}}( \|\nabla u\|_{L^\infty(B_{\rho_k}(x_0))}+1) +CR^{-n/{\gamma_0}} (\|\nabla u\|_{L^{\gamma_0}(B_{R}(x_0))}+1) \\
						&\quad+C \sup_{x\in B_{R}(x_0)}\int_0^{\frac{R}{2}}\frac{\widetilde{H}(x,t)}{t}\,dt +C \sup_{x\in B_{R}(x_0)} \left(\int_0^{\frac{R}{2}}\frac{\widetilde{h}(x,t)}{t}\,dt\right)^{\frac{1}{i_a}}, 
					\end{align*} 
					where each summation is finite. By subtracting $$ \sum_{k=k_0+1}^\infty 3^{-nk/{\gamma_0}}( \|\nabla u\|_{L^\infty(B_{\rho_k}(x_0))}+1) $$ from both sides of the above inequality, we get the following $L^{\infty}$-estimate for $\nabla u$: 
					\begin{equation}\label{infty-norm1}
						\begin{aligned} 
							&\|\nabla u\|_{L^\infty(B_{R/2}(x_0))}+1\leq C R^{-n/{\gamma_0}} \|\nabla u\|_{L^{\gamma_0}(B_{R}(x_0))} \\ &\qquad+C \sup_{x\in B_{R}(x_0)}\int_0^{\frac{R}{2}}\frac{\widetilde{H}(x,t)}{t}\,dt +C \sup_{x\in B_{R}(x_0)} \left(\int_0^{\frac{R}{2}}\frac{\widetilde{h}(x,t)}{t}\,dt\right)^{\frac{1}{i_a}}.
						\end{aligned}
					\end{equation}

					Using the definitions of $\widetilde H$ and $\widetilde h$,  it is not difficult to  check that
					\[
					\int_0^{R/2}\frac{\widetilde H(x,t)}{t}\,dt
					+\Big(\int_0^{R/2}\frac{\widetilde h(x,t)}{t}\,dt\Big)^{\frac1{i_a}}
					\le C\big(\mathbf W_{i_a,g}^R(|\mu|)(x)\big)^{\frac1{i_a}} .
					\]
					By Lemma \ref{lem-32}, we know that  $\gamma_0\le1-i_a$. Thus  we  deduce from  \eqref{infty-norm1} that
					\begin{equation}\label{eq:Linf-smooth}
						\|\nabla u\|_{L^\infty(B_{R/2}(x_0))}
						\le C\big\|\mathbf W_{i_a,g}^R(|\mu|)\big\|_{L^\infty(B_R(x_0))}^{\frac1{i_a}}
						+ CR^{-\frac{n}{1-i_a}}\||\nabla u|+1\|_{L^{1-i_a}(B_R(x_0))}.
					\end{equation}
					
					\medskip
					\noindent
					\textit{Step 2. Approximation argument.}
					Let $r_1\in(0,R)$ and set $r_2=(R+r_1)/2$.
					Choose a sequence of standard mollifiers $\{\varphi_k\}$ and define
					\[
					\mu_k=\mu*\varphi_k,
					\qquad
					\cA_k(x,\xi)=(\cA(\cdot,\xi)*\varphi_k)(x)
					\quad\text{in }B_{r_2}(x_0).
					\]
					For $k$ sufficiently large, $\cA_k$ satisfies the same structural conditions as $\cA$ in $B_{r_2}(x_0)$.
					By \cite[Theorem~3]{CGZ24}, assumption \eqref{eq:assump-wolff} ensures that
					$\mu\in(W^{1,G}(B_{r_2}(x_0)))'$, and hence
					\[
					\mu_k\to\mu \quad\text{in }(W^{1,G}(B_{r_2}(x_0)))'.
					\]
					
					Let $u_k\in u+W^{1,G}_0(B_{r_2}(x_0))$ solve
					\[
					-\DIV(\cA_k(x,\nabla u_k))=\mu_k \quad\text{in }B_{r_2}(x_0).
					\]
					
					Standard energy estimates yield a uniform bound of $\{\nabla u_k\}$ in $L^G(B_{r_2}(x_0))$.
					Moreover, a monotonicity argument implies
					\[
					\nabla u_k\to\nabla u \quad\text{strongly in }L^G(B_{r_2}(x_0)),
					\]
					up to a subsequence.
					
					Since $\cA_k$ and $\mu_k$ are smooth, classical regularity theory gives
					$u_k\in C^{1,\alpha}_{\mathrm{loc}}(B_{r_2}(x_0))$.
					Therefore estimate \eqref{eq:Linf-smooth} applies to $u_k$ on $B_{r_1/2}(x_0)$:
					\[
					\|\nabla u_k\|_{L^\infty(B_{r_1/2}(x_0))}
					\le C\big\|\mathbf W_{i_a,g}^{r_1}(|\mu_k|)\big\|_{L^\infty(B_{r_1}(x_0))}^{\frac1{i_a}}
					+ Cr_1^{-\frac{n}{1-i_a}}\||\nabla u|+1\|_{L^{1-i_a}(B_{r_1}(x_0))}.
					\]
					By Fubini--Tonelli theorem and the definition of $\mu_k$, the Wolff potentials satisfy
					\[
					\big\|\mathbf W_{i_a,g}^{r_1}(|\mu_k|)\big\|_{L^\infty(B_{r_1}(x_0))}
					\le \big\|\mathbf W_{i_a,g}^{r_1}(|\mu|)\big\|_{L^\infty(B_{r_2}(x_0))}
					\]
					for all large $k$.
					Passing to the limit $k\to\infty$ and then letting $r_1\to R$, we obtain the desired Lipschitz bound
					\eqref{ineq:int:lip} at $x_0$.
				\end{proof}

				\medskip
				
				\subsection*{Acknowledgment}
				This work was supported by the National Natural Science Foundation of China (No. 12471128).

				\subsection*{Conflict of interest} The authors declare that there is no conflict of interest. We also declare that this
				manuscript has no associated data.
				
				\subsection*{Data availability} Data sharing is not applicable to this article as no datasets were generated or analysed
				during the current study.

			\end{document}